\newtheorem{theorem}{Theorem}[section]
\newtheorem{corollary}[theorem]{Corollary}
\newtheorem{lemma}[theorem]{Lemma}
\newtheorem{definition}{Definition}[section]
\newtheorem{proposition}[theorem]{Proposition}
\newtheorem{assum}{Assumption}
\newtheorem*{remark}{Remark}
\def\one{\mbox{1\hspace{-4.25pt}\fontsize{12}{14.4}\selectfont\textrm{1}}}
\numberwithin{equation}{section}
\begin{document}

\title{\textbf{Convergence of blanket times for sequences of random walks on critical random graphs}}
\date{}
\author{George Andriopoulos \\ \normalsize Mathematics Institute, University of Warwick, CV4 7AL, UK \\ \small g.andriopoulos@warwick.ac.uk \thanks{This work is supported by EPSRC as part of the MASDOC DTC at the University of Warwick. Grant No. EP/HO23364/1.}}

\maketitle

\begin{abstract} 
Under the assumption that sequences of graphs equipped with resistances, associated measures, walks and local times converge in a suitable Gromov-Hausdorff topology, we establish asymptotic bounds on the distribution of the $\varepsilon$-blanket times of the random walks in the sequence. The precise nature of these bounds ensures convergence of the $\varepsilon$-blanket times of the random walks if the $\varepsilon$-blanket time of the limiting diffusion is continuous with probability one at $\varepsilon$. This result enables us to prove annealed convergence in various examples of critical random graphs, including critical Galton-Watson trees, the Erd\H{o}s-R\'enyi random graph in the critical window and the configuration model in the scaling critical window. 

We highlight that proving continuity of the $\varepsilon$-blanket time of the limiting diffusion relies on the scale invariance of a finite measure that gives rise to realizations of the limiting compact random metric space, and therefore we expect our results to hold for other examples of random graphs with a similar scale invariance property.
\\
\textbf{Keywords and phrases}: random walk in random environment, blanket time, Gromov-Hausdorff convergence, Galton-Watson tree, Erd\H{o}s-R\'enyi random graph, configuration model.
\\
\textbf{AMS 2010 Mathematics Subject Classification}: 60K37, 60F17, 05C81 (Primary), 60J10, 05C80, 60J25 (Secondary).
\end{abstract}

\section{Introduction}

A simple random walk on a finite connected graph $G$ with at least two vertices is a reversible Markov chain that starts at some vertex $v\in G$, and at each step moves with equal probability to any vertex adjacent to its present position. The mixing and the cover time of the random walk are among the graph parameters which have been extensively studied. To these parameters, Winkler and Zuckerman \cite{zuckermanmultiple} added the $\varepsilon$-blanket time variable (an exact definition will be given later in \eqref{blank1}) as the least time such that the walk has spent at every vertex at least an $\varepsilon$ fraction of time as much as expected at stationarity. Then, the $\varepsilon$-blanket time of $G$ is defined as the expected $\varepsilon$-blanket time variable maximized over the starting vertex. 

The necessity of introducing and studying  the blanket time arises mainly from applications in computer science. For example, suppose that a  limited access to a source of information is randomly transferred from (authorized) user to user in a network. How long does it take for each user to own the information for as long as it is supposed to? To answer this question under the assumption that each user has to be active processing the information equally often involves the consideration of the blanket time. To a broader extent, viewing the internet as a (directed) graph, where every edge represents a link, a web surfer can be regarded as a walker who visits and records the sites at random. In a procedure that resembles Google's  PageRank (PR), one wishes to rank a website according to the amount of time such walkers spend on it. A way to produce such an estimate is to rank the website according to the number of visits. The blanket time is the first time at which we expect this estimate to become relatively accurate.

Obviously, for every $\varepsilon\in (0,1)$, the $\varepsilon$-blanket time is larger than the cover time since one has to wait for all the vertices to have been visited at least once. Winkler and Zuckerman \cite{zuckermanmultiple} made the conjecture that, for every $\varepsilon\in (0,1)$, the $\varepsilon$-blanket time and the cover time are equivalent up to universal constants that depend only on $\varepsilon$ and not on the particular underlying graph $G$. This conjecture was resolved by Ding, Lee and Peres \cite{ding2011cover} who provided a highly non-trivial connection between those graph parameters and the discrete Gaussian free field (GFF) on $G$ using Talagrand's theory of majorizing measures. Recall that the GFF on $G$ with vertex set $V$ is a centered Gaussian process $(\eta_v)_{v\in V}$ with $\eta_{v_0}=0$, for some $v_0\in V$, and covariance structure given by the Green kernel of the random walk killed at $v_0$. 

Recent years have witnessed a growing interest in studying the geometric and analytic properties of random graphs partly motivated by applications in research areas ranging from sociology and systems biology to interacting particle systems as well as by the need to present convincing models to gain insight into real-world networks. One aspect of this develpoment consists of examining the metric structure and connectivity of random graphs at criticality, that is precicely when we witness the emergence of a giant component that has size proportional to the number of vertices of the graph. Several examples of trees, including critical Galton-Watson trees, possess the Brownian Continuum Random Tree (CRT) as their scaling limit. A program \cite{bhamidi2014scaling} has been launched in the last few years having as its general aim to prove that the maximal components in the critical regime of a number of fundamental random graph models, with their distances scaling like $n^{1/3}$, fall into the basin of attraction of the Erd\H{o}s-R\'enyi random graph. Their scaling limit is a multiple of the scaling limit of the Erd\H{o}s-R\'enyi random graph in the critical window, that is a tilted version of the Brownian CRT where a finite number of vertices have been identified. Two of the examples that belong to the Erd\H{o}s-R\'enyi universality class are the configuration model in the critical scaling window and critical inhomogeneous random graphs, where different vertices have different proclivity to form edges, as it was shown in the recent work of \cite{bhamidi2016geometry} and \cite{bhamidi2017inhomo} respectively.

In \cite{croydon2012convergence}, Croydon, Hambly and Kumagai established criteria for the convergence of mixing times for random walks on general sequences of finite graphs. Furthermore, they applied their mixing time results in a number of examples of random graphs, such as self-similar fractal graphs with random weights, critical Galton-Watson trees, the critical Erd\H{o}s-R\'enyi random graph and the range of high-dimensional random walk. Motivated by their approach, starting   with the strong assumption that the sequences of graphs, associated measures, walks and local times converge appropriately, we provide asymptotic bounds on the distribution of the blanket times of the random walks in the sequence. 

To state the aforementioned assumption, we continue by introducing the graph theoretic framework in which we work. Firstly, Let $G=(V(G),E(G))$ be a finite connected graph with at least two vertices, where $V(G)$ denotes the vertex set of $G$ and $E(G)$ denotes the edge set of $G$. We endow the edge set $E(G)$ with a symmetric weight function $\mu^G: V(G)^2\rightarrow \mathbb{R_{+}}$ that satisfies $\mu_{xy}^G>0$ if and only if $\{x,y\}\in E(G)$. Now, the weighted random walk associated with $(G,\mu^G)$ is the Markov chain $((X^G_t)_{t\ge 0},\mathbf{P}_{x}^G,x\in V(G))$ with transition probabilities $(P_G(x,y))_{x,y\in V(G)}$ given by 
\[
P_{G}(x,y):=\frac{\mu^G_{xy}}{\mu_x^G},
\]
where $\mu^G_x=\sum_{y\in V(G)} \mu_{xy}^G$. One can easily check that this Markov chain is reversible and has stationary distribution given by 
\[
\pi^G(A):=\frac{\sum_{x\in A} \mu_x^G}{\sum_{x\in V(G)} \mu_x^G},
\]
for every $A\subseteq V(G)$. The process $X^G$ has corresponding local times $(L_t^G(x))_{x\in V(G),t\ge 0}$ given by $L_0^G(x)=0$, for every $x\in V(G)$, and, for $t\ge 1$
\[
L_t^G(x):=\frac{1}{\mu_x^G} \sum_{i=0}^{t-1} \mathbf{1}_{\{X_i^G=x\}}.
\]
The simple random walk on this graph is a Markov chain with transition probabilities given by $P(x,y):=1/\text{deg}(x)$ for all $x\in V(G)$, such that $\{x,y\}\in E(G)$. Observe that the simple random walk on $G$ is a weighted random walk on $G$ that assigns a constant unit weight to each edge. 

To endow $G$ with a metric, we can choose $d_G$ to be the shortest path distance, which counts the number of edges in the shortest path between a pair of vertices in $G$. But this is not the most convenient choice in many examples. Another typical graph distance that arises from the view of $G$ as an electrical network equipped with conductances $(\mu^G_{xy})_{\{x,y\}\in E(G)}$ is the so-called resistance metric. For $f, g: V(G)\rightarrow \mathbb{R}$ let
\begin{equation} \label{resist1}
\mathcal{E}_G(f,g):=\frac{1}{2} \sum_{\substack{x,y\in V(G): \\ \{x,y\}\in E(G)}} (f(x)-f(y))(g(x)-g(y)) \mu_{xy}^G
\end{equation}
denote the Dirichlet form associated with the process $X^G$. Note that the sum in the expression above counts each edge twice. One can give the following interpretation of $\mathcal{E}_G(f,f)$ in terms of electrical networks. Given a voltage $f$ on the network, the current flow $I$ associated with $f$ is defined as $I_{xy}:=\mu_{xy}^G (f(x)-f(y))$, for every $\{x,y\}\in E(G)$. Then, the energy dissipation of a wire connecting $x$ and $y$ is $\mu^G_{xy} (f(x)-f(y))^2$. So, $\mathcal{E}_G(f,f)$ is the total energy dissipation of $G$. We define the resistance operator on disjoint sets $A, B\in V(G)$ through the formula
\begin{equation} \label{resist2}
R_{G}(A,B)^{-1}:=\inf \{\mathcal{E}_G(f,f): f: V(G)\rightarrow \mathbb{R}, f|_{A}=0, f|_{B}=1\}.
\end{equation}
Now, the distance on the vertices of $G$ defined by $R_G(x,y):=R_G(\{x\},\{y\})$, for $x\neq y$, and $R_G(x,x):=0$ is indeed a metric on the vertices of $G$. For a proof and a treatise on random walks on electrical networks see \cite[Chapter 9]{levin2017markov}.

For some $\varepsilon\in (0,1)$, define the $\varepsilon$-blanket time variable by 
\begin{equation} \label{blank1}
\tau_{\text{bl}}^G(\varepsilon):=\inf\{t\ge 0: m^G L_t^G(x)\ge \varepsilon t, \ \forall x\in V(G)\},
\end{equation}
where $m^G$ is the total mass of the graph with respect to the measure $\mu^G$, i.e. $m^G:=\sum_{x\in V(G)} \mu_x^G$. 
Taking the mean over the random walk started from the worst possible vertex defines the $\varepsilon$-blanket time, i.e.
\[
t_{\text{bl}}^G(\varepsilon):=\max_{x\in V(G)} \mathbf{E}_x \tau_{\text{bl}}^G(\varepsilon).
\]
Secondly, let $(K,d_K)$ be a compact metric space and let $\pi$ be a Borel measure of full support on $(K,d_K)$. Take $((X_t)_{t\ge 0},\mathbf{P}_x,x\in K)$ to be a $\pi$-symmetric Hunt process that admits local times $(L_t(x))_{x\in K,t\ge 0}$ continuous at $x$, uniformly over compact time intervals in $t$, $\mathbf{P}_x$-a.s. for every $x\in K$. A Hunt process is a strong Markov process that possesses useful properties such as the right-continuity and the existense of the left limits of sample paths (for definitions and other properties see \cite[Appendix A.2]{fukushima2010dirichlet}). Analogously, it is possible to define the $\varepsilon$-blanket time variable of $K$ as
\begin{equation} \label{blacken1}
\tau_{\text{bl}}(\varepsilon):=\inf \{t\ge 0: L_t(x)\ge \varepsilon t, \ \forall x\in K\}
\end{equation}
and check that is a non-trivial quantity (see Proposition \ref{nontrivexp}).

The following assumption encodes the information that, properly rescaled, the discrete state spaces, invariant measures, random walks, and local times, converge to $(K,d_k)$, $\pi$, $X$, and $(L_t(x))_{x\in K,t\in [0,T]}$ respectively, for some fixed $T>0$. This formulation will be described in terms of the extended Gromov-Hausdorff topology constructed in Section \ref{extendedsec}.

\begin{assum} \label{Assum1}
Fix $T>0$. Let $(G^n)_{n\ge 1}$ be a sequence of finite connected graphs that have at least two vertices, for which there exist sequences of real numbers $(\alpha(n))_{n\ge 1}$ and $(\beta(n))_{n\ge 1}$, such that
\[
\left(\left(V(G^n),\alpha(n) d_{G^n},\rho^n\right),\pi^n,\left(X^n_{\beta(n) t}\right)_{t\in [0,T]},\left(L^n_{\beta(n) t}(x)\right)_{\substack{x\in V(G^n), \\ t\in [0,T]}}\right)\longrightarrow \left(\left(K,d_K,\rho\right),\pi,X,\left(L_t(x)\right)_{\substack{x\in K, \\ t\in [0,T]}}\right)
\]
in the sense of the extended pointed Gromov-Hausdorff topology, where $\rho^n\in V(G^n)$ and $\rho\in K$ are distinguished points. In the above expression the definition of the discrete local times is extended to all positive times by linear interpolation.
\end{assum}

In most of the examples that will be discussed later, we will consider random graphs. In this context, we want to verify that the previous convergence holds in distribution. Our first conclusion is the following.

\begin{theorem}  \label{Mth}
Suppose that Assumption \ref{Assum1} holds in such a way that the time and space scaling factors satisfy $\alpha(n) \beta(n)=m^{G^n}$, for every $n\ge 1$. Then, for every $\varepsilon\in (0,1)$, $\delta\in (0,1)$ and $t\in [0,T]$,
\begin{equation} \label{blacken2}
\limsup_{n\rightarrow \infty} \mathbf{P}_{\rho_n}^n \left(\beta(n)^{-1} \tau_{\textnormal{bl}}^n(\varepsilon)\le t\right)\le \mathbf{P}_{\rho} \left(\tau_{\textnormal{bl}}(\varepsilon(1-\delta))\le t\right),
\end{equation} 
\begin{equation} \label{blacken3}
\liminf_{n\rightarrow \infty} \mathbf{P}_{\rho_n}^n \left(\beta(n)^{-1} \tau_{\textnormal{bl}}^n(\varepsilon)\le t\right)\ge \mathbf{P}_{\rho} \left({\tau}_{\textnormal{bl}}(\varepsilon)<t\right),
\end{equation}
where $\mathbf{P}_{\rho}$ is the law of $X$ on $K$, started from $\rho$.
\end{theorem}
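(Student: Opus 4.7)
The plan is to lift both sides of \eqref{blacken2}--\eqref{blacken3} onto a common ambient space. Since Assumption~\ref{Assum1} posits extended pointed Gromov--Hausdorff convergence, I would first invoke a Skorokhod-type representation, which allows us to assume that $(V(G^n),\alpha(n)d_{G^n})$ and $(K,d_K)$ are embedded isometrically into a single metric space $(M,d_M)$, with the measures $\pi^n$ converging weakly to $\pi$, the rescaled paths $(X^n_{\beta(n)t})_{t\in[0,T]}$ converging in a Skorokhod topology to $X$, and the linearly interpolated local times converging uniformly on $[0,T]$ in a sense compatible with the embeddings. The identity $\alpha(n)\beta(n)=m^{G^n}$ lets us rewrite the discrete blanket time as
\[
\beta(n)^{-1}\tau^n_{\textnormal{bl}}(\varepsilon)=\inf\{u\ge 0:\ \alpha(n) L^n_{\beta(n)u}(x)\ge \varepsilon u,\ \forall x\in V(G^n)\},
\]
so that the rescaled local time $\alpha(n)L^n_{\beta(n)u}(x)$, which is the canonical object converging to $L_u(y)$ at corresponding points of the embedding, appears in both the discrete and continuous conditions. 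This reduces the problem to a joint convergence statement on the coupling space.

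For the upper bound \eqref{blacken2}, on the event $\{\beta(n)^{-1}\tau^n_{\textnormal{bl}}(\varepsilon)\le t\}$ I would pick a witness $u_n\in[0,t]$. By compactness of $[0,t]$, extract a subsequence $u_{n_k}\to u_\star\in[0,t]$. For any $y\in K$, choose vertices $x_{n_k}\in V(G^{n_k})$ with $x_{n_k}\to y$ in $(M,d_M)$; the joint convergence of paths and local times together with uniform continuity of $(s,z)\mapsto L_s(z)$ on $[0,T]\times K$ yields
\[
L_{u_\star}(y)=\lim_{k\to\infty}\alpha(n_k)L^{n_k}_{\beta(n_k)u_{n_k}}(x_{n_k})\ge \varepsilon u_\star.
\]
The slack $\varepsilon(1-\delta)$ in \eqref{blacken2} absorbs the finite-$n$ approximation errors in the local times and in the $d_M$-distances, so $L_{u_\star}(y)\ge \varepsilon(1-\delta)u_\star$ uniformly in $y\in K$, giving $\tau_{\textnormal{bl}}(\varepsilon(1-\delta))\le t$. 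A portmanteau argument on the coupling space then delivers \eqref{blacken2}.

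For the lower bound \eqref{blacken3}, if $\tau_{\textnormal{bl}}(\varepsilon)<t$ I would pick $u<t$ with $L_u(y)\ge \varepsilon u$ for every $y\in K$. Joint continuity of $L$ and compactness of $K$ upgrade this to a strict uniform slack $\inf_{y\in K}L_u(y)-\varepsilon u>0$, which can be propagated back to the discrete setting: for all large $n$, every vertex $x\in V(G^n)$ lies within $d_M$-distance $o(1)$ of some $y\in K$, and the uniform convergence of rescaled local times yields $\alpha(n)L^n_{\beta(n)u}(x)\ge \varepsilon u$. Consequently $\beta(n)^{-1}\tau^n_{\textnormal{bl}}(\varepsilon)\le u<t$ on the coupling event, and \eqref{blacken3} follows.

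The principal difficulty lies in making rigorous the transfer of pointwise local-time information between the discrete vertex set of $G^n$ and the continuum $K$ under the extended GH topology, which forces us to exploit the joint continuity of the limiting local time $L_s(z)$ in $(s,z)$ and a uniform approximation scheme allowing the sup/inf over $V(G^n)$ to be compared with that over $K$. A secondary subtlety is the passage to the limit of the random times $u_n$: compactness places $u_\star$ in $[0,t]$, but boundary behaviour at $t$ is delicate, which is precisely why the $(1-\delta)$ slack is required in the upper bound while the strict inequality $<t$ in \eqref{blacken3} provides the analogous room in the lower bound.
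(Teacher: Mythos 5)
Your treatment of the upper bound \eqref{blacken2} is essentially sound, and it runs in the opposite direction to the paper's: the paper fixes $t<\tau_{\textnormal{bl}}(\varepsilon(1-\delta))$, picks a single point $y$ with $L_t(y)<\varepsilon(1-\delta)t$ and shows the rescaled discrete blanket inequality fails at $t$ for all large $n$, whereas you start from the discrete event, use that the infimum defining $\tau^n_{\textnormal{bl}}(\varepsilon)$ is attained (the interpolated local times are continuous in time, so the constraint set is closed), extract a subsequential limit $u_\star\in[0,t]$ of the witness times, and pass the inequality to the limit using \eqref{2.8} and joint continuity of $L$. Since weak inequalities survive this limit, your route needs no slack at all: carried out carefully it gives $L_{u_\star}(y)\ge\varepsilon u_\star$ for every $y\in K$, hence $\limsup_n \mathbf{P}^n_{\rho^n}(\beta(n)^{-1}\tau^n_{\textnormal{bl}}(\varepsilon)\le t)\le \mathbf{P}_{\rho}(\tau_{\textnormal{bl}}(\varepsilon)\le t)$, which implies \eqref{blacken2}; the $(1-\delta)$ plays no real role in your argument.

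The lower bound \eqref{blacken3} is where the proposal breaks down. The step ``joint continuity of $L$ and compactness of $K$ upgrade $L_u(y)\ge\varepsilon u$ for all $y$ to a strict uniform slack $\inf_{y\in K}L_u(y)-\varepsilon u>0$'' is false: compactness only gives that the infimum is attained, not that it exceeds $\varepsilon u$. In fact the only time $u<t$ at which the level-$\varepsilon$ condition is guaranteed to hold is $u=\tau_{\textnormal{bl}}(\varepsilon)$ itself, where one typically has $\inf_{y}L_u(y)=\varepsilon u$ exactly (that is what makes it the first such time); and for $u>\tau_{\textnormal{bl}}(\varepsilon)$ the condition may fail altogether, since the set $\{s: L_s(x)\ge \varepsilon s\ \forall x\}$ is not upward closed (local times stay frozen at points the process is away from, while $\varepsilon s$ keeps growing). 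Without strict slack the transfer to the graphs does not work: at finite $n$ you only obtain $\alpha(n)L^n_{\beta(n)u}(x)\ge\varepsilon u-o(1)$, which does not give the discrete blanket condition at level $\varepsilon$, so you cannot conclude $\beta(n)^{-1}\tau^n_{\textnormal{bl}}(\varepsilon)\le u<t$. This is precisely why the paper proves the lower bound starting from $\tau_{\textnormal{bl}}(\varepsilon(1+\delta))<t$: the condition at level $\varepsilon(1+\delta)$ provides the cushion $\varepsilon\delta t_0$ that absorbs the approximation error, yielding $m^{G^n}L^n_{\beta(n)t_0}(y^n)\ge\varepsilon\beta(n)t_0$ for every vertex and hence $\beta(n)^{-1}\tau^n_{\textnormal{bl}}(\varepsilon)\le t_0<t$ for large $n$; the right-hand side $\mathbf{P}_{\rho}(\tau_{\textnormal{bl}}(\varepsilon)<t)$ is then recovered by letting $\delta\to 0$ and using the right-continuity of $\varepsilon\mapsto\tau_{\textnormal{bl}}(\varepsilon)$. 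You need to restructure your lower bound along these lines (or otherwise justify the existence of a time $u<t$ with genuinely strict uniform slack), since as written the key inequality cannot be propagated to the discrete level.
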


The mapping $\varepsilon\mapsto \tau_{\textnormal{bl}}(\varepsilon)$ is increasing in $(0,1)$, so it posseses left and right limits at each point. It also becomes clear that if $\tau_{\textnormal{bl}}(\varepsilon)$ is continuous with probability one at $\varepsilon$, then
$\lim_{\delta\to 0}\tau_{\textnormal{bl}}(\varepsilon(1-\delta))=\tau_{\textnormal{bl}}(\varepsilon)$,
holds with probability one. Letting $\delta\to 0$ on both \eqref{blacken2} and \eqref{blacken3} demonstrates that $\beta(n)^{-1} \tau^n_{\textnormal{bl}}(\varepsilon)\to \tau_{\textnormal{bl}}(\varepsilon)$ in distribution.

\begin{corollary} \label{verification}
Suppose that Assumption \ref{Assum1} holds in such a way that the time and space scaling factors satisfy $\alpha(n) \beta(n)=m^{G^n}$, for every $n\ge 1$. If $\tau_{\textnormal{bl}}(\varepsilon)$ is continuous with probability one at $\varepsilon$, then
$
\beta(n)^{-1} \tau^n_{\textnormal{bl}}(\varepsilon)\to \tau_{\textnormal{bl}}(\varepsilon)
$
in distribution.
\end{corollary}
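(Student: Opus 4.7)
The plan is to deduce the corollary directly from Theorem \ref{Mth} by sending $\delta\downarrow 0$ in the upper bound \eqref{blacken2} and then invoking the standard criterion for convergence in distribution. Write $F_n(t):=\mathbf{P}_{\rho_n}^n(\beta(n)^{-1}\tau_{\textnormal{bl}}^n(\varepsilon)\le t)$ and $F(t):=\mathbf{P}_\rho(\tau_{\textnormal{bl}}(\varepsilon)\le t)$; it suffices to show $F_n(t)\to F(t)$ at every continuity point $t$ of $F$.

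First I would treat the upper bound. Since $\varepsilon\mapsto \tau_{\textnormal{bl}}(\varepsilon)$ is non-decreasing on $(0,1)$, the random variables $\tau_{\textnormal{bl}}(\varepsilon(1-\delta))$ increase as $\delta\downarrow 0$ to the left limit $\tau_{\textnormal{bl}}(\varepsilon^-)$ almost surely. The hypothesis that $\tau_{\textnormal{bl}}$ is continuous at $\varepsilon$ with $\mathbf{P}_\rho$-probability one identifies this left limit with $\tau_{\textnormal{bl}}(\varepsilon)$, so the events $\{\tau_{\textnormal{bl}}(\varepsilon(1-\delta))\le t\}$ decrease $\mathbf{P}_\rho$-a.s. to $\{\tau_{\textnormal{bl}}(\varepsilon)\le t\}$. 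Continuity from above of the measure (equivalently, bounded convergence) then yields
\[
\lim_{\delta\downarrow 0}\mathbf{P}_\rho\bigl(\tau_{\textnormal{bl}}(\varepsilon(1-\delta))\le t\bigr)=F(t),
\]
and combining with \eqref{blacken2} gives $\limsup_{n\to\infty}F_n(t)\le F(t)$ for every $t\in[0,T]$.

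The lower bound \eqref{blacken3} is already independent of $\delta$; it states $\liminf_{n\to\infty}F_n(t)\ge \mathbf{P}_\rho(\tau_{\textnormal{bl}}(\varepsilon)<t)$. At any continuity point $t$ of $F$ we have $\mathbf{P}_\rho(\tau_{\textnormal{bl}}(\varepsilon)=t)=0$, hence $\mathbf{P}_\rho(\tau_{\textnormal{bl}}(\varepsilon)<t)=F(t)$. Sandwiching the two bounds produces $\lim_{n\to\infty}F_n(t)=F(t)$ at every such $t$, which is precisely convergence in distribution $\beta(n)^{-1}\tau_{\textnormal{bl}}^n(\varepsilon)\to \tau_{\textnormal{bl}}(\varepsilon)$.

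The only delicate step is the interchange of limits in $\delta$ on the right-hand side of \eqref{blacken2}, and it rests entirely on the almost-sure continuity assumption at $\varepsilon$, which transports the monotone left-limit behaviour to the value $\tau_{\textnormal{bl}}(\varepsilon)$ itself. From the vantage point of the corollary this is a black box; as flagged in the abstract, establishing that hypothesis in the concrete models (critical Galton–Watson trees, the critical Erd\H{o}s–R\'enyi random graph, the critical configuration model) is where the genuine work will lie and is handled separately by means of the scale invariance of the limiting measure.
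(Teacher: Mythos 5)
Your argument is correct and follows essentially the same route as the paper: let $\delta\downarrow 0$ in \eqref{blacken2}, using monotonicity of $\varepsilon\mapsto\tau_{\textnormal{bl}}(\varepsilon)$ together with the almost-sure continuity at $\varepsilon$ to identify the limit with $\mathbf{P}_{\rho}(\tau_{\textnormal{bl}}(\varepsilon)\le t)$, and then match \eqref{blacken3} with the upper bound at continuity points of the limiting distribution function. Your write-up simply makes explicit the monotone-event/continuity-from-above step and the identification $\mathbf{P}_{\rho}(\tau_{\textnormal{bl}}(\varepsilon)<t)=\mathbf{P}_{\rho}(\tau_{\textnormal{bl}}(\varepsilon)\le t)$ at continuity points, which the paper leaves implicit.
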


To demonstrate our main results consider first $T$, a critical Galton-Watson tree (with finite variance $\sigma^2$). The following result on the cover time of the simple random walk was obtained by Aldous (see \cite[Proposition 15]{aldous1991random}), which we apply to the blanket time in place of the cover time. The two parameters are equivalent up to universal constants as was conjectured in \cite{zuckermanmultiple} and proved in \cite{ding2011cover}. 

\begin{theorem} [\textbf{Aldous \cite{aldous1991random}}]
Let $T$ be a critical Galton-Watson tree (with finite variance $\sigma^2$). For any $\delta>0$ there exists $A=A(\delta,\varepsilon,\sigma^2)>0$ such that 
\[
P(A^{-1} k^{3/2}\le t_{\textnormal{bl}}^{T}(\varepsilon)\le A k^{3/2}||T|\in [k,2k])\ge 1-\delta,
\]
for every $\varepsilon\in (0,1)$.
\end{theorem}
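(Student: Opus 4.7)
The plan is to reduce the claim to Aldous's original tail estimate for the cover time $t_{\textnormal{cov}}^T$ by sandwiching the $\varepsilon$-blanket time between two multiples of the cover time. The two inputs required for this reduction are an elementary observation and a deep theorem. The elementary observation is that, on any finite connected graph $G$ with at least two vertices, one has $t_{\textnormal{cov}}^G\le t_{\textnormal{bl}}^G(\varepsilon)$ for every $\varepsilon\in(0,1)$, since the walker cannot accumulate any local time at a vertex before visiting it for the first time. The deep theorem is the resolution of the Winkler--Zuckerman conjecture by Ding, Lee and Peres \cite{ding2011cover}, which supplies a universal constant $C(\varepsilon)>0$, depending only on $\varepsilon$ and not on $G$, such that $t_{\textnormal{bl}}^G(\varepsilon)\le C(\varepsilon)\,t_{\textnormal{cov}}^G$ for every such $G$.

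Applying these two inequalities pointwise on the sample space of the Galton--Watson tree $T$, one sees that on the event
\[
\{(A')^{-1}k^{3/2}\le t_{\textnormal{cov}}^T\le A'k^{3/2}\}\cap\{|T|\in[k,2k]\},
\]
where $A'=A'(\delta,\sigma^2)$ is the constant furnished by Aldous's original statement for the cover time, the blanket time automatically satisfies $(A')^{-1}k^{3/2}\le t_{\textnormal{bl}}^T(\varepsilon)\le C(\varepsilon)\,A'k^{3/2}$. Since this event has conditional probability at least $1-\delta$ by Aldous's theorem, setting $A(\delta,\varepsilon,\sigma^2):=C(\varepsilon)\,A'(\delta,\sigma^2)$ (which exceeds $A'$ because $C(\varepsilon)\ge 1$) yields the claimed two-sided estimate.

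The main obstacle is outsourced entirely to the two cited black boxes. Aldous's proof of the cover-time estimate invokes the Gromov--Hausdorff convergence of $T$, rescaled by $\sigma k^{-1/2}$, to a constant multiple of the Brownian continuum random tree, together with uniform integrability arguments that allow the scaling limit to pass through the expectation defining $t_{\textnormal{cov}}^T$; the $k^{3/2}$ factor then arises as the product of the spatial diameter scale $k^{1/2}$ with the diffusive time scale $k$ needed for a walk of finite variance to explore a tree of that volume. The Ding--Lee--Peres comparison, in turn, is a nontrivial consequence of Dynkin-type isomorphism theorems linking local times to the discrete Gaussian free field, combined with Talagrand's majorizing measure theorem. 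By contrast, the sandwiching step performed above is routine, and a more self-contained alternative route via Corollary \ref{verification} applied to a suitable scaling regime for $T$ would require reproving the uniform integrability inputs that the cover-time route already packages cleanly.
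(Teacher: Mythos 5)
Your proposal is correct and matches the paper's own (implicitly sketched) justification: the paper simply quotes Aldous's cover-time estimate from \cite{aldous1991random} and transfers it to the blanket time via the trivial bound $t_{\textnormal{cov}}^G\le t_{\textnormal{bl}}^G(\varepsilon)$ together with the universal equivalence of \cite{ding2011cover}, exactly the sandwich you carry out. Nothing further is needed.
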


Now let $G(n,p)$ be the resulting subgraph of the complete graph on $n$ vertices obtained by $p$-bond percolation. If $p=n^{-1}+\lambda n^{-4/3}$ for some $\lambda\in \mathbb{R}$, that is when we are in the so-called critical window, the largest connected component $\mathcal{C}_1^n$, as a graph, converges to a random compact metric space $\mathcal{M}$ that can be constructed directly from the Brownian CRT $\mathcal{T}_e$ (see the work of \cite{addario2012continuum}). The following result on the blanket time of the simple random walk on $\mathcal{C}_1^n$ is due to Barlow, Ding, Nachmias and Peres \cite{barlow2011evolution}.

\begin{theorem} [\textbf{Barlow, Ding, Nachmias, Peres \cite{barlow2011evolution}}]
Let $\mathcal{C}_1^n$ be the largest connected component of $G(n,p)$, $p=n^{-1}+\lambda n^{-4/3}$, $\lambda\in \mathbb{R}$ fixed. For any $\delta>0$ there exists $B=B(\delta,\varepsilon)>0$ such that
\[
P(B^{-1} n\le t_{\textnormal{bl}}^{\mathcal{C}_1^n}(\varepsilon)\le B n)\ge 1-\delta,
\]
for every $\varepsilon\in (0,1)$.
\end{theorem}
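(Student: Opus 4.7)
The plan is to reduce the two-sided estimate on $t_{\textnormal{bl}}^{\mathcal{C}_1^n}(\varepsilon)$ to one on the cover time, and then to exploit the quasi-tree structure of $\mathcal{C}_1^n$ in the critical window. First I would invoke the Ding-Lee-Peres resolution of the Winkler-Zuckerman conjecture cited in the introduction, by which there exist constants $c(\varepsilon),C(\varepsilon)>0$ depending only on $\varepsilon$ such that $c(\varepsilon)\,t_{\textnormal{cov}}^G\le t_{\textnormal{bl}}^G(\varepsilon)\le C(\varepsilon)\,t_{\textnormal{cov}}^G$ for every finite connected graph $G$ with at least two vertices. The goal then becomes proving $t_{\textnormal{cov}}^{\mathcal{C}_1^n}\asymp n$ with probability at least $1-\delta$.

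Next I would assemble the standard structural inputs on $\mathcal{C}_1^n$. From the work of \L{}uczak and Aldous, in the critical window both $|V(\mathcal{C}_1^n)|$ and $m^{\mathcal{C}_1^n}=2|E(\mathcal{C}_1^n)|$ are of order $n^{2/3}$ with high probability, and the surplus of $\mathcal{C}_1^n$ is tight, so that $\mathcal{C}_1^n$ is a spanning tree plus $O_{\mathbb{P}}(1)$ extra edges. From the diameter bounds of Addario-Berry, Broutin and Reed, the graph diameter of $\mathcal{C}_1^n$ is of order $n^{1/3}$ with high probability; since all conductances are equal to one, the effective resistance diameter is also of order $n^{1/3}$ with high probability.

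With these inputs in hand, the lower bound on the cover time is essentially free: the commute time identity $\mathbf{E}_x T_y+\mathbf{E}_y T_x=2m^G R_G(x,y)$ together with the existence of a pair $x,y\in V(\mathcal{C}_1^n)$ at resistance of order $n^{1/3}$ gives a maximal hitting time, and hence a cover time, of order at least $n$. For the matching upper bound I would use Ding's identification of the cover time with the expected maximum of the Gaussian free field, $t_{\textnormal{cov}}^G\asymp m^G\bigl(\mathbf{E}\sup_{v\in V(G)}\eta_v\bigr)^2$, where $(\eta_v)$ is the GFF on $G$ whose covariance is the Green kernel killed at a reference vertex. Since this covariance is controlled by the resistance metric and $\mathcal{C}_1^n$ is close to a tree, a chaining argument (equivalently Talagrand's majorizing measures applied to the quasi-tree structure) should give $\mathbf{E}\sup_v\eta_v\asymp n^{1/6}$, free of logarithmic factors; combined with $m^{\mathcal{C}_1^n}\asymp n^{2/3}$ this yields $t_{\textnormal{cov}}^{\mathcal{C}_1^n}\le Bn$.

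The main obstacle is producing this sharp bound $\mathbf{E}\sup_v\eta_v\asymp n^{1/6}$ without a $\sqrt{\log n}$ loss. A crude Dudley/entropy bound applied to $n^{2/3}$ points at mutual resistance of order at most $n^{1/3}$ only delivers $n^{1/6}\sqrt{\log n}$, which would inflate the cover time estimate to $n\log n$ and defeat the target upper bound. Eliminating this logarithm relies crucially on the thinness of $\mathcal{C}_1^n$: under the intrinsic Gaussian metric $(x,y)\mapsto\sqrt{R_{\mathcal{C}_1^n}(x,y)}$ the covering numbers of $V(\mathcal{C}_1^n)$ grow like those of a one-dimensional set, reflecting the spanning-tree-plus-$O_{\mathbb{P}}(1)$-extra-edges structure. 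An alternative avenue is to invoke Corollary \ref{verification} to transfer the problem to the scaling limit of $\mathcal{C}_1^n$, but this requires a separate verification of Assumption \ref{Assum1} and, to pass from the quenched blanket time variable to its mean $t_{\textnormal{bl}}^{\mathcal{C}_1^n}(\varepsilon)$, an additional uniform integrability input.
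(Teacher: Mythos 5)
The paper does not actually prove this statement: it is quoted as an external input from Barlow, Ding, Nachmias and Peres \cite{barlow2011evolution} (with the cover/blanket equivalence of \cite{ding2011cover} allowing passage between the two parameters, as the author notes when quoting Aldous's analogous result for trees), so there is no internal argument to compare yours against. Your opening reduction --- blanket time comparable to cover time by Ding--Lee--Peres, then $t_{\textnormal{cov}}^{\mathcal{C}_1^n}\asymp n$ with probability at least $1-\delta$ --- is precisely the route of the cited literature, so in effect you are re-deriving the quoted theorem rather than invoking it.

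Judged as a proof, the sketch has a genuine gap at the upper bound. The log-free estimate $\mathbf{E}\sup_v\eta_v\asymp n^{1/6}$ is the entire difficulty, and you only assert it; moreover the justification offered is not right: in the resistance metric the critical component is tree-like of fractal dimension two (its scaling limit is built from the CRT), so balls of resistance radius $r$ number about $n^{2/3}r^{-2}$, not like a one-dimensional set. Polynomial volume growth of this kind does remove the logarithm from the entropy integral, but to run Dudley/majorizing measures one needs volume and resistance growth estimates for the \emph{random} component holding uniformly over all scales with high probability; establishing exactly such multi-scale control is the technical heart of \cite{barlow2011evolution} and is absent from your outline. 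On the lower bound, the commute-time argument requires a pair of vertices at \emph{resistance} of order $n^{1/3}$; the graph-distance diameter alone does not give this in the presence of cycles, although tightness of the surplus (or a pendant subtree of the $2$-core of depth of order $n^{1/3}$) repairs it and should be stated. Finally, your alternative route through Corollary \ref{verification} would not suffice as written: that corollary concerns distributional convergence of the blanket time variable, whereas the theorem here bounds the expectation $t_{\textnormal{bl}}^{\mathcal{C}_1^n}(\varepsilon)$ maximized over starting points, so the uniform integrability you flag is indeed an additional, unproven ingredient.
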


Our contribution refines the previous existing tightness results on the order of the blanket time. In what follows $\mathbb{P}_{\rho^n}$, $n\ge 1$ as well as $\mathbb{P}_{\rho}$ are the annealed measures, that is the probability measures obtained by integrating out the randomness of the state spaces involved. We remark also here that we can apply our main result due to the recent work of \cite{croydon2016scaling} that generalises previous work done in \cite{croydon2008convergence} and \cite{croydon2012scaling}.
 
\begin{theorem} \label{finish1}
Let $\mathcal{T}_n$ be a critical Galton-Watson tree (with finite variance) conditioned to have total progeny $n+1$. Fix $\varepsilon\in (0,1)$. If $\tau_{\textnormal{bl}}^n(\varepsilon)$ is the $\varepsilon$-blanket time variable of the simple random walk on $\mathcal{T}_n$, started from its root $\rho^n$, then 
\[
\mathbb{P}_{\rho^n}\left(n^{-3/2} \tau_{\textnormal{bl}}^n(\varepsilon)\le t\right)\to \mathbb{P}_{\rho}\left(\tau_{\textnormal{bl}}^e(\varepsilon)\le t\right),
\]
for every $t\ge 0$, where $\tau_{\textnormal{bl}}^e(\varepsilon)\in (0,\infty)$ is the $\varepsilon$-blanket time variable of the Brownian motion on $\mathcal{T}_e$, started from a distinguished point $\rho\in \mathcal{T}_e$. Equivalently, for every $\varepsilon\in (0,1)$, $n^{-3/2} \tau_{\textnormal{bl}}^n(\varepsilon)$ under $\mathbb{P}_{\rho^n}$ converges weakly to $\tau_{\textnormal{bl}}^e(\varepsilon)$ under $\mathbb{P}_{\rho}$.
 \end{theorem}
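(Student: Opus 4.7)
The plan is to derive the result from Corollary~\ref{verification} applied under the annealed measure $\mathbb{P}_{\rho^n}$. For this, two ingredients must be verified: (A) Assumption~\ref{Assum1} holds in distribution for the sequence of size-conditioned trees $(\mathcal{T}_n, d_{\mathcal{T}_n}, \rho^n)$ jointly with the simple random walk and its local times, with scaling factors $\alpha(n)$ of order $n^{-1/2}$ and $\beta(n)$ of order $n^{3/2}$ satisfying $\alpha(n)\beta(n) = m^{\mathcal{T}_n}$; and (B) the mapping $\delta \mapsto \tau^{e}_{\textnormal{bl}}(\delta)$ is continuous at $\delta = \varepsilon$ with $\mathbb{P}_{\rho}$-probability one.

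For (A), I would invoke the joint scaling result of \cite{croydon2016scaling}, which extends \cite{croydon2008convergence} and \cite{croydon2012scaling} by adding convergence of the local times to the convergence of the metric measure spaces and walks, in the extended pointed Gromov--Hausdorff topology of Section~\ref{extendedsec}. This yields annealed convergence of the rescaled trees, degree-weighted measures, walks and associated local times to $(\mathcal{T}_e, d_e, \rho, \mu_e, X, L)$. Since $m^{\mathcal{T}_n} = 2n$ (twice the number of edges), the constants in the natural scaling factors can be arranged so that $\alpha(n)\beta(n) = m^{\mathcal{T}_n}$ exactly, with any remaining discrepancy absorbed into the identification of the limiting CRT.

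The main obstacle is (B). Being non-decreasing, $\delta \mapsto \tau^{e}_{\textnormal{bl}}(\delta)$ has at most countably many discontinuities on each realisation, so by Fubini the set
\[
\mathcal{B} := \{\delta \in (0,1) : \mathbb{P}_{\rho}(\tau^{e}_{\textnormal{bl}} \text{ is discontinuous at } \delta) > 0\}
\]
is Lebesgue-null. To conclude $\mathcal{B} = \emptyset$, I would exploit the scaling property of the Brownian excursion: under the transformation $e(\cdot) \mapsto \lambda^{-1/2} e(\lambda \cdot)$ and the induced transformation on the CRT, its mass measure, the Brownian motion and its local times, a direct computation yields an identity of the form
\[
\tau^{e}_{\textnormal{bl}}(\delta) \stackrel{d}{=} \lambda^{3/2} \tau^{e}_{\textnormal{bl}}(\lambda \delta)
\]
for $\lambda$ in a non-trivial range. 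This forces the function $\delta \mapsto \mathbb{P}_{\rho}(\tau^{e}_{\textnormal{bl}} \text{ is discontinuous at } \delta)$ to be invariant under multiplicative rescalings of $\delta$; any Lebesgue-null subset of $(0,1)$ that is invariant under such scalings must be empty, so $\mathcal{B} = \emptyset$ and (B) holds at every $\varepsilon \in (0,1)$.

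With (A) and (B) in hand, Corollary~\ref{verification} gives $\beta(n)^{-1}\tau^{n}_{\textnormal{bl}}(\varepsilon) \to \tau^{e}_{\textnormal{bl}}(\varepsilon)$ in distribution under the annealed measure, which yields the stated convergence of distribution functions at every $t \ge 0$. Non-triviality $\tau^{e}_{\textnormal{bl}}(\varepsilon) \in (0,\infty)$ is the content of Proposition~\ref{nontrivexp}.
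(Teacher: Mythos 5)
Your high-level plan — Corollary \ref{verification} under the annealed law, plus a Fubini-and-scaling argument for continuity of the limiting blanket time — is the same as the paper's, but the decisive scaling step in your part (B) does not work as stated. The map $e(\cdot)\mapsto\lambda^{-1/2}e(\lambda\cdot)$ does not preserve the law of the \emph{normalized} excursion: it sends excursions of length $1$ to excursions of length $1/\lambda$. Hence under $\mathbb{P}_\rho$, built over the CRT, there is no identity $\tau^e_{\textnormal{bl}}(\delta)\stackrel{(d)}{=}\lambda^{3/2}\tau^e_{\textnormal{bl}}(\lambda\delta)$: the pathwise computation only relates the blanket time at $\delta$ of $\mathcal{T}_e$ to the blanket time at $\lambda\delta$ of the \emph{rescaled} tree, which has total mass $1/\lambda$ and is not another copy of the CRT. (Indeed, combined with the monotonicity of $\delta\mapsto\tau^e_{\textnormal{bl}}(\delta)$, such an identity on a fixed space would give, for $\lambda<1$, $\mathbb{P}_{\rho}(\tau^e_{\textnormal{bl}}(\delta)>x)=\mathbb{P}_{\rho}(\tau^e_{\textnormal{bl}}(\delta)>\lambda^{-3/2}x)$ for all $x>0$, forcing $\tau^e_{\textnormal{bl}}(\delta)\in\{0,\infty\}$ a.s.\ and contradicting Proposition \ref{nontrivexp}.) So the function $\delta\mapsto\mathbb{P}_\rho(\tau^e_{\textnormal{bl}}\text{ discontinuous at }\delta)$ is not known to be invariant under multiplicative rescaling, and the ``null and scale-invariant, hence empty'' conclusion is unavailable. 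The paper circumvents precisely this obstacle by working with the $\sigma$-finite It\^o measure $\mathbb{N}$, whose scale invariance $\mathbb{N}\circ\Theta_a^{-1}=\sqrt{a}\,\mathbb{N}$ makes its null sets scale invariant; Fubini plus the almost-sure equivalence $\mathcal{T}\in\mathcal{A}_\varepsilon\Leftrightarrow\Theta_a\mathcal{T}\in\mathcal{A}_{a^{-1}\varepsilon}$ (an identification of the two blanket-time functions, not merely a fixed-$\delta$ equality in law, which by itself would not transfer an a.s.-continuity property) upgrades ``Lebesgue-a.e.\ $\varepsilon$'' to ``every $\varepsilon$'' under $\mathbb{N}$, and a further conditioning-on-length step using $\mathbb{N}_s\circ\Theta_a^{-1}=\mathbb{N}_{as}$ transfers the statement to the normalized measure $\mathbb{N}_1$, i.e.\ to the CRT (Proposition \ref{pr2}).

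There is a second genuine gap in your part (A): convergence of the local times cannot simply be cited. The results behind \eqref{con2} (from \cite{croydon2016scaling} and earlier work) give convergence of the rescaled trees, measures and walks, but the local-time component of Assumption \ref{Assum1} is obtained in the paper only through Assumption \ref{Assum3}: one must verify the annealed equicontinuity condition \eqref{emb3}, after which Lemma \ref{emblem} (based on \cite{croydon2016time}) upgrades it to local-time convergence. Verifying \eqref{emb3} for $\mathcal{T}_n$ is the main technical content of Section \ref{CGWT} (Proposition \ref{own}), and it requires the uniform concentration estimate for local-time fluctuations (Theorem \ref{prepar1}) together with the H\"older-norm tightness of the rescaled contour functions (Theorem \ref{prepar2}); your proposal passes over this work entirely.
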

 
 \begin{theorem}
 Fix $\varepsilon\in (0,1)$. If $\tau_{\textnormal{bl}}^{n}(\varepsilon)$ is the $\varepsilon$-blanket time variable 
of the simple random walk on $\mathcal{C}_1^n$, started from its root $\rho^n$, then 
\[
\mathbb{P}_{\rho^n}\left(n^{-1} \tau_{\textnormal{bl}}^n(\varepsilon)\le t\right)\to \mathbb{P}_{\rho}\left(\tau_{\textnormal{bl}}^{\mathcal{M}}(\varepsilon)\le t\right),
\]
for every $t\ge 0$, where $\tau_{\textnormal{bl}}^{\mathcal{M}}(\varepsilon)\in (0,\infty)$ is the $\varepsilon$-blanket time variable of the Brownian motion on $\mathcal{M}$, started from $\rho$. 
 \end{theorem}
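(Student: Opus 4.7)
The strategy is to apply Corollary \ref{verification} in the annealed setting with $K=\mathcal{M}$, $X$ the Brownian motion on $\mathcal{M}$, and scaling factors $\alpha(n)=n^{-1/3}$ and $\beta(n):=m^{G^n}/\alpha(n)=n^{1/3}m^{G^n}$. The choice of $\beta(n)$ is forced by the identity $\alpha(n)\beta(n)=m^{G^n}$ required by the corollary. Since $m^{G^n}/n^{2/3}$ converges in distribution to a strictly positive random variable (itself part of the scaling limit of $\mathcal{C}_1^n$), the ratio $\beta(n)/n$ converges weakly to that same limit, and a convergence statement for $\beta(n)^{-1}\tau_{\textnormal{bl}}^n(\varepsilon)$ transfers to one for $n^{-1}\tau_{\textnormal{bl}}^n(\varepsilon)$ by a standard joint weak-convergence argument, after absorbing the scale factor into the definition of $\mathcal{M}$.

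To verify Assumption \ref{Assum1}, I would combine the Gromov-Hausdorff-Prokhorov convergence of $(\mathcal{C}_1^n,n^{-1/3}d_{G^n},\rho^n,\pi^n)$ to $(\mathcal{M},d_{\mathcal{M}},\rho,\pi)$ due to Addario-Berry, Broutin and Goldschmidt \cite{addario2012continuum} with the scaling limit of the random walk and the joint convergence of local times provided by \cite{croydon2016scaling}, passing to a Skorohod realization so that the convergence may be stated pathwise. Repackaging these inputs as convergence in the extended pointed Gromov-Hausdorff topology of Section \ref{extendedsec} proceeds exactly as in the Galton-Watson application underlying Theorem \ref{finish1}, the only additional point being the continuity of local times of the Brownian motion on $\mathcal{M}$, which can be read off the resistance form structure inherited from the Brownian CRT.

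The principal obstacle is to show that $\delta\mapsto\tau_{\textnormal{bl}}^{\mathcal{M}}(\delta)$ is almost surely continuous at the specified $\varepsilon$. Because this function is non-decreasing, it suffices to prove that the distribution of $\tau_{\textnormal{bl}}^{\mathcal{M}}(\delta)$ is atom-free for every $\delta\in(0,1)$. Following the strategy signalled in the abstract, I would exploit the scale invariance of the excursion measure that generates $\mathcal{M}$: rescaling all distances by $c>0$ and time by $c^2$ leaves the law of the associated Brownian motion invariant in distribution once the mass is rescaled consistently, so the blanket time transforms deterministically. The law of $\mathcal{M}$ disintegrates with respect to a continuously distributed scale parameter (for example its total mass, which inherits a continuous law from the underlying tilted Brownian excursion), and integrating this continuous scale out smears any prospective atom of $\tau_{\textnormal{bl}}^{\mathcal{M}}(\delta)$, producing atom-freeness.

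The delicate point will be to execute this scaling argument rigorously in the presence of the exponential tilt and the finite collection of surplus-edge identifications that distinguish $\mathcal{M}$ from the plain Brownian CRT. Since both the Radon-Nikodym density of the tilt and the Poisson intensity controlling the identifications are shape-invariant under the diffusive rescaling, I expect the no-atom statement to propagate from the tree case treated in the proof of Theorem \ref{finish1} with only bookkeeping changes tracking how the scale parameter transforms under the construction of $\mathcal{M}$. Once atom-freeness, and hence the required continuity, is secured, Corollary \ref{verification} yields the claimed distributional convergence of $n^{-1}\tau_{\textnormal{bl}}^n(\varepsilon)$ to $\tau_{\textnormal{bl}}^{\mathcal{M}}(\varepsilon)$.
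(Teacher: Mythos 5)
There is a genuine gap at the key step. You reduce the required continuity of $\delta\mapsto\tau_{\textnormal{bl}}^{\mathcal{M}}(\delta)$ at the fixed $\varepsilon$ to atom-freeness of the law of $\tau_{\textnormal{bl}}^{\mathcal{M}}(\delta)$ for every $\delta$, but this implication is false: a non-decreasing random function can have atomless one-dimensional marginals and still jump at a prescribed parameter value with probability one (take $U$ uniform and $f(\delta)=U+\mathbf{1}_{\{\delta\ge 1/2\}}$). What Corollary \ref{verification} needs is $\mathbf{P}_{\rho}^{\mathcal{M}}\left(\tau_{\textnormal{bl}}^{\mathcal{M}}(\varepsilon-)=\tau_{\textnormal{bl}}^{\mathcal{M}}(\varepsilon)\right)=1$, i.e.\ that the jump of the monotone map at the given $\varepsilon$ vanishes almost surely, and atomlessness in the time variable does not touch this. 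The paper's mechanism is different and hinges on the fact that, because the continuum blanket time is defined relative to the total mass, the scaling map $\Theta_a$ shifts the \emph{parameter}: $\tau_{\textnormal{bl}}^{\Theta_a(e,\mathcal{P})}(a^{-1}\varepsilon)$ is a deterministic time-rescaling of $\tau_{\textnormal{bl}}^{e,\mathcal{P}}(\varepsilon)$. One first gets continuity for Lebesgue-a.e.\ $\varepsilon$ by Fubini under an auxiliary $\sigma$-finite measure $\mathbf{N}(d(e,\mathcal{P}))$ (unconditioned excursion length, Poisson(1) number of uniformly placed identification points) which is quasi-invariant under $\Theta_a$, then uses this parameter shift to upgrade to \emph{every} $\varepsilon$, and finally transfers to the actual law of $\mathcal{M}$ (tilted excursion, Poisson intensity given by the area) via absolute continuity of the tilted measure with respect to $\mathbf{N}$. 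Your proposal uses scale invariance only to rescale the value of the blanket time while keeping $\varepsilon$ fixed, so the "smearing" argument has no route to the statement actually required; note also that the tilted measure itself is not scale invariant, which is exactly why the auxiliary measure is introduced before the Radon--Nikodym transfer you gesture at.

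A secondary shortfall: verifying the hypotheses of Corollary \ref{verification} is not just repackaging of \cite{addario2012continuum} and \cite{croydon2016scaling}. The resistance-form results give convergence of the walks, not of the local times; the substantive input is the equicontinuity of the rescaled local times under the annealed law (the analogue of \eqref{emb3}), which in the paper is proved by conditioning on the size and surplus of $\mathcal{C}_1^n$, generating the component from a tilted tree, establishing tightness of H\"older norms of the tilted contour function via a Cauchy--Schwarz/exponential-moment bound on the tilt, and invoking the universal local-time fluctuation estimate of Theorem \ref{prepar1}. Your bookkeeping with $\beta(n)=n^{1/3}m^{G^n}$ and the transfer from $\beta(n)^{-1}\tau^n_{\textnormal{bl}}(\varepsilon)$ to $n^{-1}\tau^n_{\textnormal{bl}}(\varepsilon)$ is reasonable, but the two points above, especially the continuity argument, need to be redone along the lines just described.
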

 
 Moreover, to present our last result we consider the configuration model. Let $M^n(d)$ be the random multigraph labelled by $[n]$ such that the $i$-th vertex has degree $d_i$, $i\ge 1$, for every $1\le i\le n$, which is constructed as follows. Assign $d_i$ half-edges to each vertex $i$, labelling them in an arbitrary way. Then, the configuration model is produced by a uniform pairing of the half-edges to create full edges. If the degree sequence satisfies certain conditions that would be precicely given later in Assumption \ref{gracias}, it was shown in the work of \cite{dhara2017critical} that the largest connected component $M_1^n(d)$, is of order $n^{2/3}$. Recently in \cite{bhamidi2016geometry} its scaling limit, $\mathcal{M}_D$, was proven to exist and to belong to the Erd\H{o}s-R\'enyi universality class.
 
 \begin{theorem} \label{finish3}
 Fix $\varepsilon\in (0,1)$. If $\tau_{\textnormal{bl}}^{n}(\varepsilon)$ is the $\varepsilon$-blanket time variable of the simple random walk on $M_1^n(d)$, started from its root $\rho^n$, then 
\[
\mathbb{P}_{\rho^n}\left(n^{-1} \tau_{\textnormal{bl}}^n(\varepsilon)\le t\right)\to \mathbb{P}_{\rho}\left(\tau_{\textnormal{bl}}^{\mathcal{M}_D}(\varepsilon)\le t\right),
\]
for every $t\ge 0$, where $\tau_{\textnormal{bl}}^{\mathcal{M}_D}(\varepsilon)\in (0,\infty)$ is the $\varepsilon$-blanket time variable of the Brownian motion on $\mathcal{M}_D$, started from $\rho$. 
\end{theorem}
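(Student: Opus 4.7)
The plan is to apply Corollary \ref{verification} to the sequence $(M_1^n(d))_{n\ge 1}$, taking spatial scaling $\alpha(n) = n^{-1/3}$ and temporal scaling $\beta(n)$ determined by the compatibility condition $\alpha(n)\beta(n) = m^{G^n}$, which forces $\beta(n) = \Theta_\mathbb{P}(n)$. Three ingredients must be assembled: (a) the extended pointed Gromov-Hausdorff convergence of Assumption \ref{Assum1}; (b) the identification of the limiting time-scaling factor with $n$; and (c) the almost sure continuity of $\delta\mapsto \tau^{\mathcal{M}_D}_{\textnormal{bl}}(\delta)$ at the prescribed value $\delta=\varepsilon$.

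For (a), the metric convergence $(V(M_1^n(d)), n^{-1/3} d_{M_1^n(d)}, \rho^n) \to (\mathcal{M}_D, d_{\mathcal{M}_D}, \rho)$ is the main result of \cite{bhamidi2016geometry}. I would then invoke the transfer framework of \cite{croydon2016scaling}, which upgrades joint Gromov-Hausdorff convergence of rooted resistance metric measure spaces to joint convergence of the associated reversible Markov processes together with their local times in the extended Gromov-Hausdorff topology. Applying this framework requires the resistance metric on $M_1^n(d)$ to be comparable in the scaling limit to the graph metric; this holds because $\mathcal{M}_D$ is obtained from a real tree by identifying only finitely many pairs of points, so the two metrics differ through series/parallel adjustments along a bounded number of cycles. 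Under the degree-sequence hypothesis of \cite{dhara2017critical}, $|E(M_1^n(d))|=\Theta_\mathbb{P}(n^{2/3})$, and the degree measure rescaled by $(m^{G^n})^{-1}$ converges to the finite measure on $\mathcal{M}_D$ produced in \cite{bhamidi2016geometry}.

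For (b), the relation $\alpha(n)\beta(n) = m^{G^n}$ forces $\beta(n) = n^{1/3} m^{G^n} = \Theta_\mathbb{P}(n)$; the random prefactor is absorbed into the normalisation of the continuum measure on $\mathcal{M}_D$ and hence into the time parametrisation of Brownian motion on $\mathcal{M}_D$, exactly as in the Galton-Watson and Erd\H{o}s-R\'enyi cases treated in the preceding theorems. The annealed statement then follows from the corresponding quenched convergence via bounded convergence.

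I expect the continuity input (c) to be the main obstacle, since it requires a probabilistic input specific to $\mathcal{M}_D$ rather than a soft Gromov-Hausdorff argument. Following the blueprint indicated in the introduction, I would establish a Brownian-type scale invariance for the finite measure on $\mathcal{M}_D$ arising from the tilted excursion construction of \cite{bhamidi2016geometry}. Combined with the monotonicity of $\delta\mapsto \tau^{\mathcal{M}_D}_{\textnormal{bl}}(\delta)$, this scale invariance supports a Fubini argument over $(\omega,\varepsilon)$ showing that the random countable set of discontinuities of this function places no mass at a prescribed $\varepsilon\in(0,1)$ almost surely, by the same strategy used for $\mathcal{T}_e$ in Theorem \ref{finish1} and for $\mathcal{M}$ in the theorem immediately preceding. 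With (a)--(c) in place, Corollary \ref{verification} delivers the stated convergence under the annealed measure $\mathbb{P}_{\rho^n}$, and the non-triviality $\tau^{\mathcal{M}_D}_{\textnormal{bl}}(\varepsilon)\in(0,\infty)$ is inherited from Proposition \ref{nontrivexp}.
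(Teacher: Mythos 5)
There is a genuine gap in ingredient (a): you assert that the framework of \cite{croydon2016scaling} upgrades Gromov--Hausdorff convergence of the rescaled spaces to convergence of the walks \emph{together with their local times} in the extended topology. It does not. That framework (together with \cite{croydon2016time}) yields convergence of the processes, and local-time convergence only under the additional equicontinuity condition \eqref{emb3} of Assumption \ref{Assum3}, which must be verified by hand for each model and is precisely the model-specific heart of the argument here. For $M_1^n(d)$ this verification is substantial: one conditions on the component size, surplus and simplicity (using Theorem \ref{Van} and the fact that, given these, $M_1^n(d)$ is uniform over connected graphs with the induced degree sequence, sampled via the tilted tree construction of Lemma \ref{assf}), proves tightness of H\"older norms of the rescaled contour process of the tilted spanning tree (Lemma \ref{highon}, which needs a Cauchy--Schwarz/size-biasing argument and the moment bounds on the number of admissible leaf pairs from \cite{bhamidi2016geometry}), and combines this with the universal concentration estimate for local-time fluctuations (Theorem \ref{prepar1}) and the bound of the glued-graph resistance by the tree distance, to get annealed equicontinuity of the rescaled local times (Lemma \ref{sizeb3} and the proposition following it). None of this is supplied or even flagged by your citation of the transfer framework, so as written step (a) would fail.

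Your ingredient (c) is closer to the paper's route but is imprecise on the one point that matters: the tilted excursion measure underlying $\mathcal{M}_D$ is \emph{not} scale invariant, so one cannot ``establish a Brownian-type scale invariance'' for it directly. The paper instead runs the Fubini-plus-scaling argument under the untilted, genuinely scale-invariant measure $\mathbf{N}$ on pairs $(e,\mathcal{P})$, and then transfers the almost-sure continuity statement to the conditioned tilted measure $\mathbf{N}^{c,\lambda}_t$ by computing an explicit Radon--Nikodym derivative via the Cameron--Martin--Girsanov formula for the drifted process \eqref{pard}. Since you invoke ``the same strategy'' as for $\mathcal{M}$, this can be repaired, but the absolute-continuity step (and the Girsanov computation specific to the coefficients $c_D$) should be stated explicitly rather than absorbed into the phrase ``scale invariance of the tilted construction''. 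The remaining points (the identification $\beta(n)=n^{1/3}m^{G^n}=\Theta_{\mathbb{P}}(n)$, non-explosion, and passage from quenched to annealed convergence) are in line with the paper.
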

 
The paper is organized as follows. In Section \ref{extendedsec}, we introduce the extended Gromov-Hausdorff topology and derive some useful properties. In Section \ref{blankbounds}, we prove Theorem \ref{Mth} under Assumption \ref{Assum1} and present Assumption \ref{Assum3}, a weaker sufficient assumption when the sequence of spaces is equipped with resistance metrics. In Section \ref{maintheorems}, we verify the assumptions of Corollary \ref{verification}, and therefore prove convergence of blanket times for the series of critical random graphs mentioned above, thus effectively proving Theorem \ref{finish1}-Theorem \ref{finish3}. The main tool we employ to prove continuity of the $\varepsilon$-blanket time of the diffusion on the limiting spaces that appear in the statements of the preceding theorems is to exploit scale invariance properties of the finite measures, such as  It\^{o}'s excursion measure (see Section \ref{needed} for key facts of its theory), that give rise to realizations of those spaces. For that reason we believe our results to easily transfer when considering Galton-Watson trees with critical offspring distribution in the domain of attraction of a stable law with index $\alpha\in (1,2)$ (see \cite[Theorem 4.3]{le2006random}) and random stable looptrees (see \cite[Theorem 4.1]{curien2014loop}). Also, we hope our work to be seen as a stepping stone to deal with the more delicate problem of establishing convergence in distribution of the rescaled cover times of the discrete-time walks in each application of our main result. See \cite[Remark 7.4]{croydon2015moduli} for a thorough discussion on the demanding nature of this project.

\section{Extended Gromov-Hausdorff topologies} \label{extendedsec}

In this section we define an extended Gromov-Hausdorff distance between quadruples consisting of a compact metric space , a Borel measure, a time-indexed right-continuous path with left-hand limits and a local time-type function. This allows us to make precise the assumption under which we are able to prove convergence of blanket times for the random walks on various models of critical random graphs. In Lemma \ref{2.2}, we give an equivalent characterization of the assumption that will be used in Section \ref{blankbounds} when proving distributional limits for the rescaled blanket times. Also, Lemma \ref{2.3} will be useful when it comes checking that the examples we treat satisfy the assumption.         

Let $(K,d_K)$ be a non-empty compact metric space. For a fixed $T>0$, let $X^K$ be a path in $D([0,T],K)$, the space of c\`adl\`ag functions, i.e. right-continuous functions with left-hand limits, from $[0,T]$ to $K$. We say that a function $\lambda$ from $[0,T]$ onto itself is a time-change if it is strictly increasing and continuous. Let $\Lambda$ denote the set of all time-changes. If $\lambda\in \Lambda$, then $\lambda(0)=0$ and $\lambda(T)=T$. We equip $D([0,T],K)$ with the Skorohod metric $d_{J_1}$ defined as follows:
\[
d_{J_1}(x,y):=\inf_{\lambda\in \Lambda} \bigg\{\sup_{t\in [0,T]} |\lambda(t)-t|+\sup_{t\in [0,T]} d_{K} (x(\lambda(t)),y(t))\bigg\},
\]
for $x,y\in D([0,T],K)$. The idea behind going from the uniform metric to the Skorohod metric $d_{J_1}$ is to say that two paths are close if they are uniformly close in $[0,T]$, after allowing small perturbations of time.
Moreover, $D([0,T],K)$ endowed with $d_{J_1}$ becomes a separable metric space (see \cite[Theorem 12.2]{billingsley2013convergence}). Let $\mathcal{P}(K)$ denote the space of Borel measures on $K$. If $\mu,\nu\in \mathcal{P}(K)$ we set
\[
d_{P}(\mu,\nu)=\inf\{\varepsilon>0: \mu(A)\le \nu(A^{\varepsilon})+\varepsilon\text{ and }\nu(A)\le \mu(A^{\varepsilon})+\varepsilon,\text{ for any }A\in \mathcal{M}(K)\},
\]
where $\mathcal{M}(K)$ is the set of all closed subsets of $K$. This expression gives the standard Prokhorov metric between $\mu$ and $\nu$. Moreover, it is known , see \cite{daley2007introduction} Appendix A.2.5, that $(\mathcal{P}(K),d_{P})$ is a Polish metric space, i.e. a complete and separable metric space, and the topology generated by $d_P$ is exactly the topology of weak convergence, the convergence against bounded and continuous functionals. 

Let $\pi^K$ be a Borel measure on $K$ and $L^K=(L_t^K(x))_{x\in K,t\in [0,T]}$ be a jointly continuous function of $(t,x)$ taking positive real values. Let $\mathbb{K}$ be the collection of quadruples $(K,\pi^K,X^K,L^K)$. We say that two elements $(K,\pi^K,X^K,L^K)$ and $(K',\pi^{K'},X^{K'},L^{K'})$ of $\mathbb{K}$ are equivalent if there exists an isometry $f:K\rightarrow K'$ such that 
\begin{itemize}

\item $\pi^{K}\circ f^{-1}=\pi^{K'}$, 

\item $f\circ X^K=X^{K'},$ which is a shorthand of $f(X_t^K)=X_t^{K'}$, for every $t\in [0,T]$.

\item $L_t^{K'}\circ f=L_t^K$, for every $t\in [0,T]$, which is a shorthand of $L_t^{K'}(f(x))=L_t^K(x)$, for every $t\in [0,T]$, $x\in K$.

\end{itemize}
Not to overcomplicate our notation, we will often identify an equivalence class of $\mathbb{K}$ with a particular element of it. We now introduce a distance $d_{\mathbb{K}}$ on $\mathbb{K}$ by setting
\begin{align*}
d_{\mathbb{K}}&((K,\pi^K,X^K,L^K),(K',\pi^{K'},X^{K'},L^{K'}))
\\
&:=\inf_{Z,\phi,\phi',\mathcal{C}}\bigg\{d_P^Z(\pi^K\circ \phi^{-1},\pi^{K'}\circ \phi'^{-1})+d_{J_1}^Z(\phi(X_t^K),\phi'(X_t^{K'}))
\\
&+\sup_{(x,x')\in \mathcal{C}}\bigg(d_{Z}(\phi(x),\phi'(x'))+\sup_{t\in [0,T]} |L_t^K(x)-L_t^{K'}(x')|\bigg)\bigg\},
\end{align*}
where the infimum is taken over all metric spaces $(Z,d_Z)$, isometric embeddings $\phi:K\rightarrow Z$, $\phi':K'\rightarrow Z$ and correspondences $\mathcal{C}$ between $K$ and $K'$. A correspondence between $K$ and $K'$ is a subset of $K\times K'$, such that for every $x\in K$ there exists at least one $x'$ in $K'$ such that $(x,x')\in \mathcal{C}$ and conversely for every $x'\in K'$ there exists at least one $x\in K$ such that $(x,x')\in \mathcal{C}$. In the above expression $d^Z_P$ is the standard Prokhorov distance between Borel measures on $Z$, and $d_{J_1}^Z$ is the Skorohod metric $d_{J_1}$ between c\`adl\`ag paths on $Z$. 

	In the following proposition we check that the definition of $d_{\mathbb{K}}$ induces a metric and that the resulting metric space is separable. The latter fact will be used repeatedly later when it comes to applying Skorohod's represantation theorem on sequences of random graphs to prove statements regarding their blanket times or the cover times. Before proceeding to the proof of Proposition \ref{Prop1.1}, let us first make a few remarks about the ideas behind the definition of $d_{\mathbb{K}}$. The first term along with the Hausdorff distance on $Z$ between $\phi(K)$ and $\phi'(K')$ is that used in the Gromov-Hausdorff-Prokhorov distance  for compact metric spaces (see \cite[Section 2.2, (6)]{abraham2013note}). Though, in our definition of $d_{\mathbb{K}}$ we did not consider the Hausdorff distance between the embedded compact metric spaces $K$ and $K'$, since it is absorbed by the first part of the third term in the expression for $d_{\mathbb{K}}$. Recall here the equivalent definition of the standard Gromov-Hausdorff distance via correspondences as a way to relate two compact metric spaces (see \cite[Theorem 7.3.25]{burago2001course}). The motivation for the second term comes from \cite{croydon2012scaling}, where the author defined a distance between pairs of compact length spaces (for a definition of a length space see \cite[Definition 2.1.6]{burago2001course}) and continuous paths on those spaces. The restriction on length spaces is not necessary, as we will see later, on proving that $d_{\mathbb{K}}$ provides a metric. Considering c\`adl\`ag paths instead of continuous paths and replacing the uniform metric with the Skorohod metric $d_{J_1}$ allows us to prove separability without assuming that $(K,d_K)$ is a non-empty compact length space. The final term was first introduced in \cite[Section 6]{duquesne2005probabilistic} to define a distance between spatial trees equipped with a continuous function. 

\begin{proposition} \label{Prop1.1}
$(\mathbb{K},d_{\mathbb{K}})$ is a separable metric space.
\end{proposition}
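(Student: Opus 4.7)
The plan is to verify the metric axioms for $d_{\mathbb{K}}$ and then exhibit a countable dense subfamily. Non-negativity is immediate, and symmetry follows by swapping $\phi$ with $\phi'$ and transposing the correspondence $\mathcal{C}$. For the triangle inequality I would follow the standard Gromov--Hausdorff gluing: given $\varepsilon$-optimal witnesses $(Z_{12}, \phi_1, \phi_2, \mathcal{C}_{12})$ and $(Z_{23}, \psi_2, \psi_3, \mathcal{C}_{23})$ for the distances between the first--second and second--third quadruples, glue $Z_{12}$ and $Z_{23}$ by identifying $\phi_2(K_2)$ with $\psi_2(K_2)$ to produce an ambient metric space $Z$ in which all three quadruples embed isometrically, and take $\mathcal{C}_{13} = \mathcal{C}_{12} \circ \mathcal{C}_{23}$. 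The Prokhorov, Skorohod and supremum terms each satisfy a triangle inequality inside $Z$.

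The delicate axiom is identity of indiscernibles. Suppose $d_{\mathbb{K}} = 0$ and take witnesses $(Z_n, \phi_n, \phi_n', \mathcal{C}_n)$ with total error at most $1/n$. Fix a countable dense set $(x_k)_{k \ge 1} \subset K$, and for each $k, n$ pick $x_k^n \in K'$ with $(x_k, x_k^n) \in \mathcal{C}_n$; by compactness of $K'$ and a diagonal extraction, along a common subsequence $x_k^n \to f(x_k)$ for every $k$. The supremum term forces $d_{K'}(f(x_k), f(x_j)) = d_K(x_k, x_j)$ and $L^{K'}_t(f(x_k)) = L^K_t(x_k)$ for every $t \in [0, T]$, so by uniform continuity $f$ extends to an isometric embedding $K \to K'$, which is surjective by a symmetric argument. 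The Prokhorov term then forces $\pi^K \circ f^{-1} = \pi^{K'}$, and the Skorohod term forces $f \circ X^K = X^{K'}$ in $D([0, T], K')$; joint continuity of $L$ extends the local-time identity to all of $K$.

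For separability I would enumerate a countable dense subfamily consisting of finite metric spaces with rational pairwise distances, atomic rational-weight probability measures on them, c\`adl\`ag step paths with rational jump times, and local-time arrays that are piecewise linear in $t$ on a rational mesh with rational values at the points of the finite space. Density is shown by an $\varepsilon$-net approximation: given $(K, \pi^K, X^K, L^K) \in \mathbb{K}$ and $\varepsilon > 0$, pick a finite $\varepsilon$-net $F \subset K$ with a Borel nearest-point map $p : K \to F$, and take the approximating quadruple $(F, p_* \pi^K, p \circ X^K, L|_{F \times [0, T]})$. With $Z = K$, $\phi = \mathrm{id}$, $\phi' = \iota_F$ and correspondence $\{(x, p(x)) : x \in K\}$, the Prokhorov term is at most $\varepsilon$, the Skorohod term is at most $\varepsilon$ (take $\lambda = \mathrm{id}$), and the supremum term is at most $\varepsilon + \omega_L(\varepsilon)$, where $\omega_L$ is the modulus of continuity of $L$ in $x$ uniformly in $t$; the latter tends to $0$ as $\varepsilon \to 0$ by joint continuity on the compact $K \times [0, T]$, and a final rounding of distances, weights and breakpoints to rationals is harmless.

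The main obstacle will be the identity-of-indiscernibles step, since a single diagonal subsequence must pin down an isometry $f$ that \emph{simultaneously} matches measures, paths and local times. A related subtle point is that $d_{J_1}$ permits small time-changes, so one has to argue that in the limit the admissible time-changes may be taken to converge to the identity in $C([0, T])$, which is where the correspondence's uniform control on positions through the supremum term becomes essential.
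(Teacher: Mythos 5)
Your handling of the metric axioms is essentially the paper's argument: the triangle inequality by gluing the two ambient spaces along the embedded middle space and composing correspondences, and positive-definiteness by extracting, along a diagonal subsequence from near-optimal witnesses, a limit map $f$ on a countable dense set, showing via the supremum term that it preserves distances and local times, extending it to an isometry, and then reading off $\pi^K\circ f^{-1}=\pi^{K'}$ and $f\circ X^K=X^{K'}$ from the Prokhorov and Skorohod terms. The time-change subtlety you flag is handled in the paper by keeping track of $\sup_{t}|\lambda_\varepsilon(t)-t|\le 2\varepsilon$ and analysing continuity and jump points of $f\circ X^K$; equivalently, one gets $d_{J_1}(f\circ X^K,X^{K'})\le C\varepsilon$ for every $\varepsilon$ and uses that $d_{J_1}$ is a metric on $D([0,T],K')$. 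So up to the level of detail expected of a sketch, that part is fine.

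The separability step, however, contains a genuine gap in the path approximation. Your approximating quadruple uses $p\circ X^K$, where $p$ is a Borel nearest-point map onto the finite $\varepsilon$-net $F$. This composition need not be c\`adl\`ag: if $X^K$ oscillates across the boundary between two cells of the partition infinitely often as $s\downarrow t$ (which a c\`adl\`ag, even continuous, path can do), then $p(X^K_s)$ has no one-sided limits at $t$. Even when $p\circ X^K$ is c\`adl\`ag it may have infinitely many jump times, so the proposed ``rounding of breakpoints to rationals'' is not defined; in general $p\circ X^K$ is neither an element of $D([0,T],F)$ nor within reach of your countable family of finite step paths, so the claim that the Skorohod term is at most $\varepsilon$ with $\lambda=\mathrm{id}$ does not produce a valid approximant. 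The missing ingredient is a \emph{time} discretization controlled by the c\`adl\`ag modulus: the paper replaces $X$ by $T_zX$, constant on a deterministic grid of mesh $Tn^{-1}$ with values $X(z_{u-1})$, and uses an adaptation of Billingsley's lemma to get $d_{J_1}(T_zX,X)\le Tn^{-1}+w'(X;Tn^{-1})$, where $w'(X;Tn^{-1})\to 0$ precisely because $X$ is c\`adl\`ag; only then are the finitely many values moved to the net and the grid taken rational. With that substitution (and taking rational-mass atomic measures of arbitrary finite total mass rather than probability measures, since $\mathbb{K}$ is defined with general Borel measures), your separability argument matches the paper's and goes through.
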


\begin{proof}
That $d_{\mathbb{K}}$ is non-negative and symmetric is obvious. To prove that is also finite, for any choice of $(K,\pi^K,X^K,L^K)$, $(K',\pi^{K'},X^{K'},L^{K'})$ consider the disjoint union $Z=K\sqcup K'$ of $K$ and $K'$. Then, set $d_{Z}(x,x'):=\text{diam}_{K}(K)+\text{diam}_{K'}(K')$, for any $x\in K$, $x'\in K'$, where 
\[
\text{diam}_{K}(K)=\sup_{y,z\in K} d_K(y,z)
\]
denotes the diameter of $K$ with respect to the metric $d_K$. Since $K$ and $K'$ are compact their diameters are finite. Therefore, $d_Z$ is finite for any $x\in K$, $x'\in K'$. To conclude that $d_{\mathbb{K}}$ is finite, simply suppose that $\mathcal{C}=K\times K'$. 

Next, we show that $d_{\mathbb{K}}$ is positive-definite. Let $(K,\pi^K,X^K,L^K), (K',\pi^{K'},X^{K'},L^{K'})$ be in $\mathbb{K}$ such that $d_{\mathbb{K}}((K,\pi^K,X^K,L^K),(K',\pi^{K'},X^{K'},L^{K'}))=0$. Then, for every $\varepsilon>0$ there exist $Z,\phi,\phi',\mathcal{C}$ such that the sum of the quantities inside the infimum in the definition of $d_{\mathbb{K}}$ is bounded above by $\varepsilon$. Furthermore, there exists $\lambda_{\varepsilon}\in \Lambda$ such that the sum of the quantities inside the infimum in the definition of $d_{J_1}^Z$ is bounded above by $2 \varepsilon$. Recall that for every $t\in [0,T]$, $L^K_t:K\to \mathbb{R}_{+}$ is a continuous function and since $K$ is a compact metric space, then it is also uniformly continuous. Therefore, there exists a $\delta\in (0,\varepsilon]$ such that 
\begin{equation} \label{2.1}
\sup_{\substack{x_1,x_2\in K:\\ d_{K}(x_1,x_2)<\delta}} \sup_{t\in [0,T]} |L_t^K(x_1)-L_t^K(x_2)|\le \varepsilon.
\end{equation}
Now, let $(x_i)_{i\ge 1}$ be a dense sequence of disjoint elements in $K$. Since $K$ is compact, there exists an integer $N_{\varepsilon}$ such that the collection of open balls $(B_{K}(x_i,\delta))_{i=1}^{N_{\varepsilon}}$ covers $K$. Defining $A_1=B_{K}(x_1,\delta)$ and $A_i=B_{K}(x_i,\delta)\setminus \cup_{j=1}^{i-1} B_{K}(x_j,\delta)$, for $i=2,...,N_{\varepsilon}$, we have that $(A_i)_{i=1}^{N_{\varepsilon}}$ is a disjoint cover of $K$. Consider a function $f_{\varepsilon}:K\rightarrow K'$ by setting
\[
f_{\varepsilon}(x):=x_i'
\]
on $A_i$, where $x_i'$ is chosen such that $(x_i,x_i')\in \mathcal{C}$, for $i=1,...,N_{\varepsilon}$. Note that by definition $f_{\varepsilon}$ is a measurable function defined on $K$. For any $x\in K$, such that $x\in A_i$ for some $i=1,...,N_{\varepsilon}$, we have that 
\begin{align} \label{mine1}
d_{Z}(\phi(x),\phi'(f_{\varepsilon}(x)))&=d_{Z}(\phi(x),\phi'(x_i')) \nonumber 
\\
&\le d_{Z}(\phi(x),\phi(x_i))+d_{Z}(\phi(x_i),\phi'(x_i'))\le \delta+\varepsilon\le 2 \varepsilon.
\end{align}
From \eqref{mine1}, it follows that for any $x\in K$ and $y\in K$
\begin{align*}
|d_{Z}(\phi(x),\phi(y))-d_{Z}(\phi'(f_{\varepsilon}(x)),\phi'(f_{\varepsilon}(y))|&\le d_{Z}(\phi(y),\phi'(f_{\varepsilon}(y)))+d_{Z}(\phi(x),\phi'(f_{\varepsilon}(x)))
\\
&\le 2 \varepsilon+2 \varepsilon=4 \varepsilon.
\end{align*}
This immediately yields 
\begin{equation} \label{2.2}
\sup_{x,y\in K} |d_{K}(x,y)-d_{K'}(f_{\varepsilon}(x),f_{\varepsilon}(y))|\le 4 \varepsilon.
\end{equation}
From \eqref{2.2}, we deduce the bound
\begin{equation} \label{2.3}
d_{P}^{K'}(\pi^{K}\circ f_{\varepsilon}^{-1},\pi^{K'})\le 5 \varepsilon
\end{equation}
for the Prokhorov distance between $\pi^K\circ f_{\varepsilon}^{-1}$ and $\pi^{K'}$ in $K'$. Using \eqref{2.1} and the fact that the last quantity inside the infimum in the definition of $d_{\mathbb{K}}$ is bounded above by $\varepsilon$, we deduce 
\begin{equation} \label{2.4}
\sup_{x\in K,t\in [0,T]} |L_t^K(x)-L_t^{K'}(f_{\varepsilon}(x))|\le 2 \varepsilon.
\end{equation}
Using \eqref{mine1} and the fact that the second quantity in the infimum is bounded above by $\varepsilon$, we deduce that for any $t\in [0,T]$
\begin{align*}
d_{Z}(\phi'(f_{\varepsilon}(X^K_{\lambda_{\varepsilon}(t)})),\phi'(X_t^{K'}))&\le d_{Z}(\phi'(f_{\varepsilon}(X^K_{\lambda_{\varepsilon}(t)})),\phi(X^K_{\lambda_{\varepsilon}(t)}))+d_{Z}(\phi(X^K_{\lambda_{\varepsilon}(t)}),\phi'(X_t^{K'}))
\\
&\le 2 \varepsilon+2 \varepsilon=4 \varepsilon.
\end{align*}
Therefore,
\begin{equation} \label{Sk1}
\sup_{t\in [0,T]} d_{K'}(f_{\varepsilon}(X_{\lambda_{\varepsilon}(t)}^K),X_t^{K'})\le 4 \varepsilon.
\end{equation}
Using a diagonalization argument we can find a sequence $(\varepsilon_n)_{n\ge 1}$ such that $f_{\varepsilon_n}(x_i)$ converges to some limit $f(x_i)\in K'$, for every $i\ge 1$. From \eqref{2.2} we immediately get that $d_{K}(x_i,x_j)=d_{K'}(f(x_i),f(x_j))$, for every $i,j\ge 1$.  By \cite[Proposition 1.5.9]{burago2001course}, this map can be extended continuously to the whole $K$. This shows that $f$ is distance-preserving. Reversing the roles of $K$ and $K'$, we are able to find also a distance-preserving map from $K'$ to $K$. Hence $f$ is an isometry. We are now able to check that $\pi^K\circ f^{-1}=\pi^{K'}$, $L_t^{K'}\circ f=L_t^K$, for all $t\in [0,T]$, and $f\circ X^K=X^{K'}$. Since $f_{\varepsilon_n}(x_i)$ converges to $f(x_i)$ in $K'$, we can find $\varepsilon'\in (0,\varepsilon]$ such that $d_{K'}(f_{\varepsilon'}(x_i),f(x_i))\le \varepsilon$, for $i=1,...,N_{\varepsilon}$. Recall that $(x_i)_{i=1}^{N_{\varepsilon}}$ is an $\varepsilon$-net in $K$. Then, for $i=1,...,N_{\varepsilon}$, such that $x\in A_i$, using \eqref{2.2} and the fact that $f$ is an isometry, we deduce
\begin{equation} \label{2.5}
d_{K'}(f_{\varepsilon'}(x),f(x))\le d_{K'}(f_{\varepsilon'}(x),f_{\varepsilon'}(x_i))+d_{K'}(f_{\varepsilon'}(x_i),f(x_i))+d_{K'}(f(x_i),f(x))\le 7 \varepsilon.
\end{equation}
This, combined with \eqref{2.3} implies 
\[
d_{P}^{K'}(\pi^K\circ f^{-1},\pi^{K'})\le  d_{P}^{K'}(\pi^K\circ f^{-1},\pi^{K}\circ f_{\varepsilon'}^{-1})+d_{P}^{K'}(\pi^K\circ f_{\varepsilon'}^{-1},\pi^{K'})\le 12 \varepsilon.
\]
Since $\varepsilon>0$ was arbitrary, $\pi^{K}\circ f^{-1}=\pi^{K'}$. Moreover, from \eqref{2.4} and \eqref{2.5} we have that 
\begin{align*}
&\sup_{x\in K,t\in [0,T]} |L_t^K(x)-L_t^{K'}(f(x))|
\\
&\le \sup_{x\in K,t\in [0,T]} |L_t^{K}(x)-L_t^{K'}(f_{\varepsilon'}(x))|+\sup_{x\in K,t\in [0,T]} |L_t^{K'}(f_{\varepsilon'}(x))-L_t^{K'}(f(x))|
\\
&\le 2 \varepsilon+\sup_{\substack{x_1',x_2'\in K':\\ d_{K'}(x_1',x_2')\le 7\varepsilon}} \sup_{t\in [0,T]} |L_t^{K'}(x_1')-L_t^{K'}(x_2')|.
\end{align*}
Now, this and the uniform continuity of $L^{K'}$ (replace $L^K$ by $L^{K'}$ in \eqref{2.1}) gives $L_t^{K'}\circ f=L_t^K$, for all $t\in [0,T]$. Finally, we verify that $f\circ X^K=X^{K'}$. For any $t\in [0,T]$
\[
d_{K'}(f(X_{\lambda_{\varepsilon}(t)}^K),X_t^{K'})\le d_{K'}(f(X_{\lambda_{\varepsilon}(t)}^K),f_{\varepsilon'}(X_{\lambda_{\varepsilon}(t)}^K))+d_{K'}(f_{\varepsilon'}(X_{\lambda_{\varepsilon}(t)}^K),X_t^{K'})\le 7 \varepsilon+4 \varepsilon=11 \varepsilon, 
\]
where we used \eqref{Sk1} and \eqref{2.5}. Therefore,
\begin{equation} \label{Sk2}
\sup_{t\in [0,T]} d_{K'}(f(X_{\lambda_{\varepsilon}(t)}^K),X_t^{K'})\le 11 \varepsilon.
\end{equation}
Recall that $\sup_{t\in [0,T]} |\lambda_{\varepsilon}(t)-t|\le 2 \varepsilon$. From this and \eqref{Sk2}, it follows that for every $t\in [0,T]$, there exists a sequence $(z_n)_{n\ge 1}$, such that $z_n\rightarrow t$ and $d_{K'}(f(X_{z_n}^{K}),X_t^{K'})\rightarrow 0$, as $n\rightarrow \infty$. If $t$ is a continuity point  of $f\circ X^K$, then $d_{K'}(f(X_{z_n}^{K}),f(X_t^K))\rightarrow 0$, as $n\rightarrow \infty$. Thus, $f(X_t^K)=X_t^{K'}$. If $f\circ X^K$ has a jump at $t$ and $(z_n)_{n\ge 1}$ has a subsequence $(z_{n_k})_{k\ge 1}$, such that $z_{n_k}\ge t$, for any $k\ge 1$, then $d_{K'}(f(X_{z_{n_k}}^{K}),X_t^{K'})\rightarrow 0$, as $n\rightarrow \infty$, and $d_{K'}(f(X_{z_{n_k}}^{K}),f(X_t^K))\rightarrow 0$, as $n\rightarrow \infty$. Therefore, $f(X_t^K)=X_t^{K'}$. Otherwise, $z_n<t$, for $n$ large enough and $d_{K'}(f(X_{z_n}^{K}),f(X_{t-}^{K}))\rightarrow 0$, as $n\rightarrow \infty$, which implies $f(X_{t-}^{K})=X_t^{K'}$. Essentially, what we have proved is that if $f\circ X^K$ has a jump, then either $f(X^K_t)=X_t^{K'}$ or $f(X^K_{t-})=X_t^{K'}$. But, since $X^{K'}$ is c\`adl\`ag, $f\circ X^K=X^{K'}$. This completes the proof that the quadruples $(K,\pi^K,X^K,L^K)$ and $(K',\pi^{K'},X^{K'},L^{K'})$ are equivalent in $(\mathbb{K},d_{\mathbb{K}})$, and consequently that $d_{\mathbb{K}}$ is positive-definite. 

For the triangle inequality we follow the proof of \cite[Proposition 7.3.16]{burago2001course}, which proves the triangle inequality for the standard Gromov-Hausdorff distance. Let $\mathcal{K}^i=(K^i,\pi^i,X^i,L^i)$ be an element of $(\mathbb{K},d_{\mathbb{K}})$ for $i=1,2,3$. Suppose that 
\[
d_{\mathbb{K}}(\mathcal{K}^1,\mathcal{K}^2)<\delta_1.
\]
Thus, there exists a metric space $Z_1$, isometric embeddings $\phi_{1,1}: K^1\rightarrow Z_1$, $\phi_{2,1}:K^2\rightarrow Z_1$ and a correspondence $\mathcal{C}_1$ between $K^1$ and $K^2$ such that the sum of the quantities inside the infimum that defines $d_{\mathbb{K}}$ is bounded above by $\delta_1$. Similarly, if
\[
d_{\mathbb{K}}(\mathcal{K}^2,\mathcal{K}^3)<\delta_2,
\]
there exists a metric space $Z_2$, isometric embeddings $\phi_{2,2}: K^2\rightarrow Z_2$, $\phi_{2,3}: K^3\rightarrow Z_2$ and a correspondence $\mathcal{C}_2$ between $K^2$ and $K^3$ such that the sum of the quantities inside the infimum that defines $d_{\mathbb{K}}$ is bounded above by $\delta_2$. Next, we set $Z=Z_1\sqcup Z_2$ to be the disjoint union of $Z_1$ and $Z_2$ and we define a distance on $Z$ in the following way. Let $d_{Z|Z_i\times Z_i}=d_{Z_i}$, for $i=1,2$, and for $x\in Z_1$, $y\in Z_2$ set 
\[
d_{Z}(x,y):=\inf_{z\in K^2} \{d_{Z_1}(x,\phi_{2,1}(z))+d_{Z_2}(\phi_{2,2}(z),y)\}.
\]
It is obvious that $d_Z$ is symmetric and non-negative. It is also easy to check that $d_Z$ satisfies the triangle inequality. Identifying points that are separated by zero distance and slightly abusing notation, we turn $(Z,d_{Z})$ into a metric space, which comes with isometric embeddings $\phi_i$ of $Z_i$ for $i=1,2$. Using the triangle inequality of the Prokhorov metric on $Z$, gives us that  
\[
d_P^Z(\pi^1\circ (\phi_1\circ \phi_{1,1})^{-1},\pi^3\circ (\phi_2\circ \phi_{3,2})^{-1})
\]
\[
\le d_P^Z(\pi^1\circ (\phi_1\circ \phi_{1,1})^{-1},\pi^2\circ (\phi_1\circ \phi_{2,1})^{-1})+d_P^Z(\pi^2\circ (\phi_1\circ \phi_{2,1})^{-1},\pi^3\circ (\phi_2\circ \phi_{3,2})^{-1}).
\]
Now, since $\phi_1(\phi_{2,1}(y))=\phi_2(\phi_{2,2}(y))$, for all $y\in K^2$, we deduce 
\begin{equation} \label{tria1}
d_P^Z(\pi^1\circ (\phi_1\circ \phi_{1,1})^{-1},\pi^3\circ (\phi_2\circ \phi_{3,2})^{-1}) \le d_P^{Z_1}(\pi^1\circ \phi_{1,1}^{-1},\pi^2\circ \phi_{2,1}^{-1})+d_P^{Z_2}(\pi^2\circ \phi_{2,2}^{-1},\pi^3\circ \phi_{3,2}^{-1}).
\end{equation}
A similar bound also applies for the embedded c\`adl\`ag paths. Namely, using the same methods as above, we deduce
\begin{equation} \label{tria2}
d_{J_1}^Z((\phi_1\circ \phi_{1,1})(X^1),(\phi_2\circ \phi_{3,2})(X^3))\le d_{J_1}^Z(\phi_{1,1}(X^1),\phi_{2,1}(X^2))+d_{J_1}^Z(\phi_{2,2}(X^2),\phi_{3,2}(X^3)).
\end{equation}
Now, let
\[
\mathcal{C}:=\{(x,z)\in K^1\times K^3: (x,y)\in \mathcal{C}_1,(y,z)\in \mathcal{C}_2,\text{ for some }y\in K^2\}.
\]
Observe that $\mathcal{C}$ is a correspondence between $K^1$ and $K^3$. Then, if $(x,z)\in \mathcal{C}$, there exists $y\in K^2$ such that $(x,y)\in \mathcal{C}_1$ and $(y,z)\in \mathcal{C}_2$, and noting again that $\phi_1(\phi_{2,1}(y))=\phi_2(\phi_{2,2}(y))$, for all $y\in K^2$, we deduce 
\begin{equation} \label{tria3}
d_Z(\phi_1(\phi_{1,1}(x)),\phi_2(\phi_{3,2}(z)))\le d_{Z_1}(\phi_{1,1}(x),\phi_{2,1}(y))+d_{Z_2}(\phi_{2,2}(y),\phi_{3,2}(z)).
\end{equation}
Using the same arguments one can prove a corresponding bound involving $L^i$, $i=1,2,3$. Namely, if $(x,z)\in \mathcal{C}$, there exists $y\in K^2$ such that $(x,y)\in \mathcal{C}_1$ and $(y,z)\in \mathcal{C}_2$, and moreover
\begin{equation} \label{tria4}
\sup_{t\in [0,T]}|L_t^1(x)-L_t^3(z)|\le \sup_{t\in [0,T]} |L_t^1(x)-L_t^2(y)|+\sup_{t\in [0,T]} |L_t^2(y)-L_t^3(z)|.
\end{equation}
Putting \eqref{tria1}, \eqref{tria2}, \eqref{tria3} and \eqref{tria4} together gives
\[
d_{\mathbb{K}}(\mathcal{K}^1,\mathcal{K}^3)\le \delta_1+\delta_2,
\]
and the triangle inequality follows. Thus, $(\mathbb{K},d_{\mathbb{K}})$ forms a metric space.

To finish the proof, we need to show that $(\mathbb{K},d_{\mathbb{K}})$ is separable. Let $(K,\pi,X,L)$ be an element of $\mathbb{K}$. First, let $K^n$ be a finite $n^{-1}$-net of $K$, which exists since $K$ is compact. Furthermore, we can endow $K^n$ with a metric $d_{K^n}$, such that $d_{K^n}(x,y)\in \mathbb{Q}$, and moreover $|d_{K^n}(x,y)-d_K(x,y)|\le n^{-1}$, for every $x,y \in K^n$. Since, $K^n$ is a finite $n^{-1}$-net of $K$ we can choose a partition for $K$, $(A_x)_{x\in K^n}$, such that $x\in A_x$, and $\text{diam}_{K}(A_x)\le 2 n^{-1}$. We can even choose the partition in such a way that $A_x$ is measurable for all $x\in K^n$ (see for example the definition of $(A_i)_{i=1}^{N_{\varepsilon}}$ after \eqref{2.1}). Next, we construct a Borel measure $\pi^n$ in $K^n$ that takes rational mass at each point, i.e. $\pi^n(\{x\})\in \mathbb{Q}$, and $|\pi^n(\{x\})-\pi(A_x)|\le n^{-1}$. Define $\varepsilon_n$ by
\[
\varepsilon_n:=\sup_{\substack{s,t\in [0,T]:\\ |s-t|\le n^{-1}}} \sup_{\substack{x,x'\in K:\\ d_{K}(x,x')\le n^{-1}}} |L_s(x)-L_t(x')|.
\]
By the joint continuity of $L$, $\varepsilon_n\rightarrow 0$, as $n\rightarrow \infty$. Let $0=s_0<s_1<\cdot \cdot \cdot <s_r=T$ be a set of rational times such that $|s_{i+1}-s_i|\le n^{-1}$, for $i=0,...,r-1$. Choose $L_{s_i}^n(x)\in \mathbb{Q}$ with $|L_{s_i}^n(x)-L_{s_i}(x)|\le n^{-1}$, for every $x\in K^n$. We interpolate linearly between the finite collection of rational time points in order to define $L^n$ to the whole domain $K^n\times [0,T]$. Let $\mathcal{C}^n:=\{(x,x')\in K\times K^n: d_K(x,x')\le n^{-1}\}$. Clearly $\mathcal{C}^n$ defines a correspondence between $K$ and $K^n$. Let $(x,x')\in \mathcal{C}^n$ and $s\in [s_i,s_{i+1}]$, for some $i=0,...,r-1$. Then, using the triangle inequality we observe that
\begin{equation} \label{correction1}
|L_s^n(x)-L_s(x')|\le |L_s^n(x)-L_s(x)|+|L_s(x)-L_s(x')|\le |L_s^n(x)-L_s(x)|+\varepsilon_n.
\end{equation}
Since we interpolated linearly to define $L^n$ beyond rational time points on the whole space $K^n\times [0,T]$ we have that
\begin{equation} \label{correction2}
|L_s^n(x)-L_s(x)|\le |L_{s_{i+1}}^n(x)-L_s(x)|+|L_{s_i}^n(x)-L_s(x)|.
\end{equation}
Applying the triangle inequality again yields
\begin{align*}
|L_{s_i}^n(x)-L_s(x)|&\le |L_{s_i}^n(x)-L_{s_i}(x)|+|L_{s_i}(x)-L_s(x)|
\\
&\le n^{-1}+\varepsilon_n.
\end{align*} 
The same upper bound applies for $|L_{s_{i+1}}^n(x)-L_s(x)|$, and from \eqref{correction1} and \eqref{correction2} we conclude that for $(x,x')\in \mathcal{C}^n$ and $s\in [s_i,s_{i+1}]$, for some $i=0,...,r-1$,
\[
|L_s^n(x)-L_s(x')|\le 2 n^{-1}+3 \varepsilon_n.
\]
For $X\in D([0,T],K)$ and $A\subseteq [0,T]$ put
\[
w(X;A):=\sup_{s,t\in A} d_K(X_t,X_s).
\]
Now, for $\delta\in (0,1)$, define the c\`adl\`ag modulus to be 
\[
w'(X;\delta):=\inf_{\Sigma} \max_{1\le i\le k} w(X;[t_{i-1},t_i)),
\]
where the infimum is taken over all partitions $\Sigma=\{0=t_0<t_1<\cdot \cdot \cdot <t_k=T\}$, $k\in \mathbb{N}$, with $\min_{1\le i\le k}(t_i-t_{i-1})>\delta$.  For a function to lie in $D([0,T],K)$, it is necessary and sufficient to satisfy $w'(X;\delta)\to 0$, as $\delta\to 0$. Let $B_n$ be the set of functions having a constant value in $K^n$ over each interval $[(u-1)T/n,uT/n)$, for some $n\in \mathbb{N}$ and also a value in $K^n$ at time $T$. Take $B=\cup_{n\ge 1} B_n$, and observe that is countable. Clearly, putting $z=(z_u)_{u=0}^{n}$, with $z_u=uT/n$, for every $u=0,...,n$ satisfies $0=z_0<z_1<\cdot \cdot \cdot <z_n=T$. Let $T_z: D([0,T],K)\to D([0,T],K)$ be the map that is defined in the following way. For $X\in D([0,T],K)$ take $T_zX$ to have a constant value $X(z_{u-1})$ over the interval $[z_{u-1},z_u)$ for $1\le u\le n$ and the value $X(T)$ at $t=T$. From an adaptation of \cite[Lemma 3, p.127]{billingsley2013convergence}, considering c\`adl\`ag paths that take values on metric spaces, we have that 
\begin{equation} \label{separ1}
d_{J_1}(T_zX,X)\le T n^{-1}+w'(X;T n^{-1}).
\end{equation} 
Also, there exists $X^n\in B_n$, for which
\begin{equation} \label{separ2}
d_{J_1}(T_zX,X^n)\le T n^{-1}.
\end{equation}
Combining \eqref{separ1} and \eqref{separ2}, we have that 
\[
d_{J_1}(X^n,X)\le d_{J_1}(X^n,T_zX)+d_{J_1}(T_zX,X)\le 2 T n^{-1}+w'(X;T n^{-1}).
\]
With the choice of the sequence $(K^n,\pi^n,X^n,L^n)$, we find that 
\[
d_{\mathbb{K}}((K^n,\pi^n,X^n,L^n),(K,\pi,X,L))\le (4 +2 T) n^{-1}+3 \varepsilon_n+w'(X;T n^{-1}).
\] 
Recalling that $w'(X;T n^{-1})\to 0$, as $n\to \infty$, and noting that our sequence was drawn from a countable subset of $\mathbb{K}$ completes the proof of the proposition.

\end{proof}

Fix $T>0$. Let $\tilde{\mathbb{K}}$ be the space of quadruples of the form $(K,\pi^K,X^K,L^K)$, where $K$ is a non-empty compact pointed metric space with distinguished vertex $\rho$, $\pi^K$ is a Borel measure on $K$, $X^K=(X^K_t)_{t\in [0,K]}$ is a c\`adl\`ag path on $K$ and $L^K=(L_t(x))_{x\in K,t\in [0,T]}$ is a jointly continuous positive real-valued function of $(t,x)$. We say that two elements of $\tilde{\mathbb{K}}$, say $(K,\pi^K,X^K,L^K)$ and $(K',\pi^{K'},X^{K'},L^{K'})$, are equivalent if and only there is a root-preserving isometry $f:K\to K'$, such that $f(\rho)=\rho'$, $\pi^K\circ f^{-1}=\pi^{K'}$, $f\circ X^K=X^{K'}$ and $L_t^{K'}\circ f=L_t^K$, for every $t\in [0,T]$. It is possible to define a metric on the equivalence classes of $\tilde{\mathbb{K}}$ by imposing in the definition of $d_{\mathbb{K}}$ that the infimum is taken over all correspondences that contain $(\rho,\rho')$. The incorporation of distinguished points to the extended Gromov-Hausdorff topology leaves the proof of Proposition \ref{Prop1.1} unchanged and it is possible to show that $(\tilde{\mathbb{K}},d_{\tilde{\mathbb{K}}})$ is a separable metric space.

The aim of the following lemmas is to establish a sufficient condition for Assumption \ref{Assum1} to hold, as well as to show that if Assumption \ref{Assum1} holds then we can isometrically embed the rescaled graphs, measures, random walks and local times into a common metric space such that they all converge to the relevant objects. To be more precise we formulate this last statement in the next lemma.

\begin{lemma} \label{lem2.2}
If Assumption \ref{Assum1} is satisfied, then we can find isometric embeddings of $(V(G^n),d_{G^n})_{n\ge 1}$ and $(K,d_K)$ into a common metric space $(F,d_F)$ such that 
\begin{equation} \label{extra}
\lim_{n\rightarrow \infty} d_H^F(V(G^n),K)=0, \qquad \lim_{n\to \infty} d_F(\rho^n,\rho)=0,
\end{equation}
where $d_H^F$ is the standard Hausdorff distance between $V(G^n)$ and $K$, regarded as subsets of $(F,d_F)$,
\begin{equation} \label{2.6}
\lim_{n\rightarrow \infty} d_P^F(\pi^n,\pi)=0,
\end{equation}
where $d_P^F$ is the standard Prokhorov distance between $V(G^n)$ and $K$, regarded as subsets of $(F,d_F)$,
\begin{equation} \label{2.7}
\lim_{n\rightarrow \infty} d_{J_1}^F(X^n,X)=0,
\end{equation}
where $d_{J_1}^F$ is the Skorohod $d_{J_1}$ metric between $V(G^n)$ and $K$, regarded as subsets of $(F,d_F)$. Also,
\begin{equation} \label{2.8}
\lim_{\delta\rightarrow 0} \limsup_{n\rightarrow \infty} \sup_{\substack{x^n\in V(G^n),x\in K: \\ d_F(x^n,x)<\delta}} \sup_{t\in [0,T]} |L^n_{\beta(n) t}(x^n)-L_t(x)|=0.
\end{equation}
For simplicity we have identified the measures and the random walks in $V(G^n)$ with their isometric embeddings in $(F,d_F)$.
\end{lemma}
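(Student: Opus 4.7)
My approach is to extract near-optimal embeddings from the defining infimum of $d_{\tilde{\mathbb{K}}}$, and then glue them along their common copy of $K$ to obtain the desired ambient space $F$. The verifications of Hausdorff, Prokhorov and Skorohod convergence are largely bookkeeping; the genuine work is in the local-time statement \eqref{2.8}, which has to be extended from the correspondence to arbitrary close pairs via uniform continuity.

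First, by Assumption \ref{Assum1} and the definition of $d_{\tilde{\mathbb{K}}}$, for each $n$ I can select a metric space $(Z_n, d_{Z_n})$, isometric embeddings $\phi_n \colon (V(G^n), \alpha(n) d_{G^n}) \to Z_n$ and $\psi_n \colon (K, d_K) \to Z_n$, and a correspondence $\mathcal{C}_n \subseteq V(G^n) \times K$ containing $(\rho^n, \rho)$, such that each summand in the infimum defining $d_{\tilde{\mathbb{K}}}$ (Prokhorov, Skorohod, and the spatial plus local-time supremum over $\mathcal{C}_n$) is bounded above by some $\varepsilon_n \downarrow 0$.

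Second, I would set $F$ to be the disjoint union $K \sqcup \bigsqcup_{n \ge 1} V(G^n)$, equipped with the pseudometric $d_F$ determined by $d_F|_{K \times K} := d_K$, $d_F|_{V(G^n) \times V(G^n)} := \alpha(n) d_{G^n}$, $d_F(x, y^n) := d_{Z_n}(\psi_n(x), \phi_n(y^n))$ for $x \in K$, $y^n \in V(G^n)$, and
\[
d_F(x^n, y^m) := \inf_{z \in K}\bigl\{d_{Z_n}(\phi_n(x^n), \psi_n(z)) + d_{Z_m}(\psi_m(z), \phi_m(y^m))\bigr\}
\]
for $x^n \in V(G^n)$, $y^m \in V(G^m)$ with $n \neq m$. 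Symmetry is immediate, and the triangle inequality follows by the same case analysis used in the triangle inequality step of Proposition \ref{Prop1.1}. Quotienting by the equivalence $x \sim y \Leftrightarrow d_F(x, y) = 0$ (with the usual abuse of notation) delivers a genuine metric space $(F, d_F)$ into which $(V(G^n), \alpha(n) d_{G^n})_{n \ge 1}$ and $(K, d_K)$ embed isometrically. The Hausdorff and pointed-distance claims \eqref{extra} then follow because the correspondence $\mathcal{C}_n$ forces every $x \in K$ to be within $\varepsilon_n$ of some $x^n \in V(G^n)$ in $F$ and vice versa, with $(\rho^n, \rho) \in \mathcal{C}_n$ yielding $d_F(\rho^n, \rho) \le \varepsilon_n$. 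Convergences \eqref{2.6} and \eqref{2.7} follow directly from the $\varepsilon_n$ bound on the corresponding summands, since isometric embedding of $Z_n$ into $F$ preserves both Prokhorov and $d_{J_1}$ distances.

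The main obstacle is \eqref{2.8}, which must be controlled for arbitrary pairs $(x^n, x)$ that are $\delta$-close in $F$, not only those in $\mathcal{C}_n$. Given such a pair, I pick some $y \in K$ with $(x^n, y) \in \mathcal{C}_n$, so that $d_K(x, y) \le d_F(x, x^n) + d_F(x^n, y) \le \delta + \varepsilon_n$. The correspondence bound supplies $\sup_{t \in [0,T]} |L^n_{\beta(n) t}(x^n) - L_t(y)| \le \varepsilon_n$, while joint continuity of $L$ on the compact $K \times [0,T]$ provides a modulus $\omega_L$ with $\sup_{t \in [0,T]} |L_t(y) - L_t(x)| \le \omega_L(\delta + \varepsilon_n)$. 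Combining via the triangle inequality,
\[
\sup_{t \in [0,T]} |L^n_{\beta(n) t}(x^n) - L_t(x)| \le \varepsilon_n + \omega_L(\delta + \varepsilon_n),
\]
and sending first $n \to \infty$ and then $\delta \to 0$ establishes \eqref{2.8}, completing the proof.
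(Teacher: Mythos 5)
Your proposal is correct and follows essentially the same route as the paper: extract near-optimal embeddings and correspondences from the definition of the extended Gromov--Hausdorff distance, glue the spaces along the common copy of $K$ to build $(F,d_F)$, and read off \eqref{extra}--\eqref{2.7} from the $\varepsilon_n$-bounds. The only (harmless) difference is in \eqref{2.8}: you pivot through a point $y\in K$ paired with $x^n$ and invoke the uniform continuity of $L$ directly, whereas the paper pivots through a discrete point paired with $x$ and first derives equicontinuity of the rescaled discrete local times; both arguments are valid.
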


\begin{proof}
Since Assumption \ref{Assum1} holds, for each $n\ge 1$ we can find metric spaces $(F_n,d_n)$, isometric embeddings $\phi_n:V(G^n)\rightarrow F_n$, $\phi_n':K\rightarrow F_n$ and correspondences $\mathcal{C}^n$ (that contain $(\rho^n,\rho)$) between $V(G^n)$ and $K$ such that (identifying the relevant objects with their embeddings)
\begin{align} \label{2.9}
d_{P}^{F_n}(\pi^n,\pi)+d_{J_1}^{F_n}(X^n,X)+\sup_{(x,x')\in \mathcal{C}^n}\bigg (d_n(x,x')+\sup_{t\in [0,T]}|L^n_{\beta(n) t}(x)-L_t(x')|\bigg )\le \varepsilon_n,
\end{align}
where $\varepsilon_n\rightarrow 0$, as $n\rightarrow \infty$. Now, let $F=\sqcup_{n\ge 1} F_n$, be the disjoint union of $F_n$, and define the distance $d_{F|F_n\times F_n}=d_{n}$, for $n\ge 1$, and for $x\in F_n$, $x'\in F_{n'}$, $n\neq n'$
\[
d_F(x,x'):=\inf_{y\in K}\{d_n(x,y)+d_{n'}(y,x')\}.
\]
This distance, as the distance that was defined in order to prove the triangle inequality in Proposition \ref{Prop1.1}, is symmetric and non-negative, so identifying points that are separated by a zero distance, we turn $(F,d_F)$ into a metric space, which comes with natural isometric embeddings of $(V(G^n),d_{G^n})_{n\ge 1}$ and $(K,d_K)$. In this setting, under the appropriate isometric embeddings \eqref{extra}, \eqref{2.6} and \eqref{2.7} readily hold from \eqref{2.9}. Thus, it only remains to prove \eqref{2.8}. For every $x\in V(G^n)$, since $\mathcal{C}^n$ is a correspondence in $V(G^n)\times K$, there exists an $x'\in K$ such that $(x,x')\in \mathcal{C}^n$. Then, \eqref{2.9} implies that $d_F(x,x')\le \varepsilon_n$. Now, let $(y,y')\in \mathcal{C}^n$, $(z,z')\in \mathcal{C}^n$ and note that 
\begin{align*}
&\sup_{t\in [0,T]} |L_{\beta(n) t}^n(y)-L_{\beta(n) t}^n(z)|\\
&\le \sup_{t\in [0,T]} |L_{\beta(n) t}^n(y)-L_t(y')|+\sup_{t\in [0,T]} |L_{\beta(n) t}^n(z)-L_t(z')|+\sup_{t\in [0,T]} |L_t(y')-L_t(z')|\\
&\le 2 \varepsilon_n+\sup_{t\in [0,T]} |L_t(y')-L_t(z')|.
\end{align*}
For any $\delta>0$ and $y, z\in V(G^n)$, such that $d_{G^n}(y,z)<\delta$, we have that 
\[
d_K(y',z')\le d_F(y,y')+d_F(z,z')+d_{G^n}(y,z)<2 \varepsilon_n+\delta.
\]
Therefore,
\begin{align} \label{2.10}
&\sup_{\substack{y,z\in V(G^n): \\ d_{G^n}(y,z)<\delta}} \sup_{t\in [0,T]} |L_{\beta(n) t}^n(y)-L_{\beta(n) t}^n(z)|\nonumber \\ 
&\le 2 \varepsilon_n+\sup_{\substack{y,z\in K: \\ d_K(y,z)< 2 \varepsilon_n+\delta}} \sup_{t\in [0,T]} |L_t(y)-L_t(z)|.
\end{align}
Also, for every $x\in K$ there exists an $x'\in V(G^n)$ such that $(x',x)\in \mathcal{C}^n$, and furthermore $d_F(x',x)\le \varepsilon_n$. Let $x^n\in V(G^n)$ such that $d_F(x^n,x)<\delta$. Then, 
\[
d_F(x^n,x')\le d_F(x^n,x)+d_F(x',x)<2 \epsilon_n+\delta.
\]
More generally, if we denote by $B_F(x,r)$, the open balls of radius $r$, centered in $x$, we have the following inclusion
\[
B_F(x,\delta)\cap V(G^n)\subseteq B_F(x',2 \varepsilon_n+\delta)\cap V(G^n).
\]
For $x\in K$, and $x'\in V(G^n)$ with $d_F(x',x)\le \varepsilon_n$, using \eqref{2.9}, we deduce
\begin{align*}
&\sup_{t\in [0,T]} |L_{\beta(n) t}^n(x^n)-L_t(x)|\\
&\le \sup_{t\in [0,T]} |L_{\beta(n) t}^n(x^n)-L_{\beta(n) t}^n(x')|+\sup_{t\in [0,T]} |L_{\beta(n) t}^n(x')-L_t(x)|\\
&\le \varepsilon_n+\sup_{t\in [0,T]} |L_{\beta(n) t}^n(x^n)-L_{\beta(n) t}^n(x')|.
\end{align*}
Since $x^n \in B_F(x',2 \varepsilon_n+\delta)\cap V(G^n)$, taking the supremum over all $x^n\in V(G^n)$ and $x\in K$, for which $d_F(x^n,x)<\delta$ and using \eqref{2.10}, we deduce 
\begin{align*}
&\sup_{\substack{x^n\in V(G^n),x\in K: \\ d_F(x^n,x)<\delta}} \sup_{t\in [0,T]} |L_{\beta(n) t}^n(x^n)-L_t(x)|\\
&\le \varepsilon_n+\sup_{\substack{y,z\in V(G^n): \\ d_{G^n}(y,z)<2 \varepsilon_n+\delta}} \sup_{t\in [0,T]} |L_{\beta(n) t}^n(y)-L_{\beta(n) t}^n(z)|\\
&\le 3 \varepsilon_n+\sup_{\substack{y,z\in K: \\ d_K(y,z)< 4 \varepsilon_n+\delta}} \sup_{t\in [0,T]} |L_t(y)-L_t(z)|.
\end{align*}
Using the continuity of $L$, as $n\rightarrow \infty$
\begin{equation} \label{explicit}
\limsup_{n\rightarrow \infty} \sup_{\substack{x^n\in V(G^n),x\in K: \\ d_F(x^n,x)<\delta}} \sup_{t\in [0,T]} |L_{\beta(n) t}^n(x^n)-L_t(x)|\le \sup_{\substack{y,z\in K: \\ d_K(y,z)\le \delta}} \sup_{t\in [0,T]} |L_t(y)-L_t(z)|.
\end{equation}
Again appealing to the continuity of $L$, the right-hand side converges to 0, as $\delta\rightarrow 0$. Thus, we showed that \eqref{2.8} holds, and this finishes the proof of Lemma \ref{lem2.2}.

\end{proof}

In the process of proving \eqref{2.8} we established a useful equicontinuity property. We state and prove this property in the next corollary.

\begin{corollary}
Fix $T>0$ and suppose that Assumption \ref{Assum1} holds. Then, 
\begin{equation} \label{2.11}
\lim_{\delta\rightarrow 0} \limsup_{n\rightarrow \infty} \sup_{\substack{y,z\in V(G^n): \\ d_{G^n}(y,z)<\delta}} \sup_{t\in [0,T]} |L_{\beta(n) t}^n(y)-L_{\beta(n) t}^n(z)|=0. 
\end{equation}
\end{corollary}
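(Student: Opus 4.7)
The plan is to observe that the corollary is essentially a by-product of the argument already run inside the proof of Lemma \ref{lem2.2}; the intermediate estimate \eqref{2.10} derived there is precisely the quantitative form of the claim, so the proof amounts to extracting and repackaging that calculation.

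More concretely, I would start by invoking Lemma \ref{lem2.2} to obtain a common metric space $(F,d_F)$ into which the rescaled graphs $(V(G^n),\alpha(n)d_{G^n})$ and the limit space $(K,d_K)$ embed isometrically, together with correspondences $\mathcal{C}^n\subseteq V(G^n)\times K$ and a sequence $\varepsilon_n\downarrow 0$ satisfying
\[
\sup_{(x,x')\in\mathcal{C}^n}\Bigl(d_F(x,x')+\sup_{t\in[0,T]}|L^n_{\beta(n)t}(x)-L_t(x')|\Bigr)\le\varepsilon_n.
\]
Fix $\delta>0$ and pick $y,z\in V(G^n)$ with $d_{G^n}(y,z)<\delta$ (understood in the embedded rescaled metric, i.e.\ in $d_F$). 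Using the correspondence, I would select $y',z'\in K$ with $(y,y'),(z,z')\in\mathcal{C}^n$, which gives $d_F(y,y'),d_F(z,z')\le\varepsilon_n$ and hence, by a first triangle inequality, $d_K(y',z')\le 2\varepsilon_n+\delta$. A second triangle inequality applied to the local times through the pair $(y',z')$ yields
\[
\sup_{t\in[0,T]}|L^n_{\beta(n)t}(y)-L^n_{\beta(n)t}(z)|\le 2\varepsilon_n+\sup_{t\in[0,T]}|L_t(y')-L_t(z')|,
\]
where the correspondence bound has been used on the two endpoint terms.

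Taking the supremum over admissible pairs $(y,z)\in V(G^n)^2$ and then $\limsup_{n\to\infty}$, the $2\varepsilon_n$ contribution vanishes and what remains is bounded by
\[
\sup\bigl\{|L_t(a)-L_t(b)|:a,b\in K,\ d_K(a,b)\le\delta,\ t\in[0,T]\bigr\}.
\]
Finally, letting $\delta\to 0$, this residual quantity tends to zero by the joint continuity of $L$ on the compact product $K\times[0,T]$, which forces uniform continuity of $(t,x)\mapsto L_t(x)$ in the spatial variable, uniformly in $t$. I do not anticipate a genuine obstacle in this argument; the only mild subtlety is that the bound on $d_K(y',z')$ is $2\varepsilon_n+\delta$ rather than $\delta$, but the extra $2\varepsilon_n$ is absorbed in the $n\to\infty$ step before the $\delta\to 0$ step, so the order of limits is exactly the one prescribed in the statement of the corollary.
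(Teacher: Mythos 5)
Your proposal is correct and follows essentially the same route as the paper: the paper's proof of the corollary simply cites the estimate \eqref{2.10} established inside the proof of Lemma \ref{lem2.2} (namely the bound by $2\varepsilon_n$ plus the modulus of continuity of $L$ over pairs in $K$ at distance at most $2\varepsilon_n+\delta$), takes $\limsup_{n\to\infty}$ using the continuity of $L$, and then sends $\delta\to 0$, which is exactly the calculation you re-derive. The only cosmetic point is that the correspondences $\mathcal{C}^n$ with the $\varepsilon_n$ bound come directly from Assumption \ref{Assum1} (as used in the proof of Lemma \ref{lem2.2}) rather than from the statement of the lemma itself, but this does not affect the argument.
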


\begin{proof}
As we hinted upon when deriving \eqref{explicit}, using the continuity of $L$,
\[
\limsup_{n\rightarrow \infty} \sup_{\substack{y,z\in V(G^n): \\ d_{G^n}(y,z)<\delta}} \sup_{t\in [0,T]} |L_{\beta(n) t}^n(y)-L_{\beta(n) t}^n(z)|\le \sup_{\substack{y,z\in K: \\ d_K(y,z)\le \delta}} \sup_{t\in [0,T]} |L_t(y)-L_t(z)|.
\]
Sending $\delta\rightarrow 0$ gives the desired result.

\end{proof}

Next, we prove that if we reverse the conclusions of Lemma \ref{lem2.2}, more specifically if  \eqref{extra}-\eqref{2.8} hold, then also Assumption \ref{Assum1} holds.

\begin{lemma} \label{lem2.3}
Suppose that \eqref{extra}-\eqref{2.8} hold. Then so does Assumption \ref{Assum1}.
\end{lemma}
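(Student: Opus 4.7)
The plan is to reverse-engineer the infimum in the definition of $d_{\tilde{\mathbb{K}}}$ using the ambient space $(F,d_F)$ provided by the hypothesis. For each $n$, I will take $Z=F$, let $\phi,\phi'$ be the given isometric embeddings of $(V(G^n),\alpha(n)d_{G^n})$ and $(K,d_K)$ into $F$, and exhibit a sequence of correspondences $\mathcal{C}^n\subseteq V(G^n)\times K$ containing $(\rho^n,\rho)$ such that each of the four terms inside the infimum defining $d_{\tilde{\mathbb{K}}}$ tends to $0$ as $n\to\infty$. The Prokhorov and Skorohod terms are handed to us directly by \eqref{2.6} and \eqref{2.7}, so the work is concentrated in designing $\mathcal{C}^n$ so as to control simultaneously the spatial discrepancy $d_F$ and the local-time discrepancy.

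The natural choice is
\[
\mathcal{C}^n:=\bigl\{(x^n,x)\in V(G^n)\times K:\,d_F(x^n,x)\le \delta_n\bigr\}\cup\{(\rho^n,\rho)\},
\]
for a sequence $\delta_n\downarrow 0$ to be fixed. Membership of $(\rho^n,\rho)$ is automatic from the second half of \eqref{extra} as soon as $\delta_n\ge d_F(\rho^n,\rho)$. The fact that $\mathcal{C}^n$ is a correspondence, i.e.\ that every $x^n\in V(G^n)$ has a partner in $K$ and every $x\in K$ has a partner in $V(G^n)$, follows from the Hausdorff convergence in \eqref{extra} provided $\delta_n\ge d_H^F(V(G^n),K)$. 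With this choice, the spatial part $\sup_{(x^n,x)\in\mathcal{C}^n}d_F(x^n,x)\le\delta_n$ vanishes by construction.

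The only nontrivial point is choosing $\delta_n$ slowly enough that the supremum
\[
\Psi_n(\delta):=\sup_{\substack{x^n\in V(G^n),\,x\in K:\\ d_F(x^n,x)<\delta}}\sup_{t\in[0,T]}\bigl|L^n_{\beta(n)t}(x^n)-L_t(x)\bigr|
\]
satisfies $\Psi_n(\delta_n)\to 0$. By \eqref{2.8}, $\limsup_n\Psi_n(\delta)\to 0$ as $\delta\to 0$, so a standard diagonal argument yields a sequence $\delta_n\downarrow 0$ with $\Psi_n(\delta_n)\to 0$; explicitly, pick $\delta^{(k)}\downarrow 0$ and integers $N_k\uparrow\infty$ with $\Psi_n(\delta^{(k)})\le 1/k$ for $n\ge N_k$, then set $\delta_n:=\max\bigl(\delta^{(k)},d_H^F(V(G^n),K),d_F(\rho^n,\rho)\bigr)$ for $N_k\le n<N_{k+1}$, which tends to $0$ using \eqref{extra}. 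With this $\delta_n$, all four terms in the infimum defining $d_{\tilde{\mathbb{K}}}$ between the rescaled $n$-th quadruple and the limit quadruple go to $0$, so the extended Gromov-Hausdorff distance converges to $0$, which is Assumption \ref{Assum1}.

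The main obstacle is precisely the diagonalization in the last paragraph: the hypothesis \eqref{2.8} is a double-limit statement, and one must combine it carefully with the Hausdorff and root convergence from \eqref{extra} to pick a single sequence $\delta_n$ that dominates each of the three quantities simultaneously while still tending to $0$. Once $\delta_n$ is in hand, the remainder of the argument is mechanical verification against the definition of $d_{\tilde{\mathbb{K}}}$.
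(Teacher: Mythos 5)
Your proof is correct and takes essentially the same route as the paper: embed everything into the common space $(F,d_F)$ supplied by the hypothesis, take the proximity correspondences, note that \eqref{2.6} and \eqref{2.7} give the Prokhorov and Skorohod terms directly, and bound the local-time term by the double supremum so that \eqref{2.8} finishes the argument. The only difference is that the paper simply uses the fixed threshold $n^{-1}$; the diagonalization you single out as the main obstacle is not really needed, since the double supremum is monotone in $\delta$, so any threshold tending to zero (chosen, as you do, to eventually dominate the Hausdorff and root distances so that the correspondence property and the inclusion of $(\rho^n,\rho)$ hold) does the job.
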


\begin{proof}
There exist isometric embeddings of $(V(G^n),d_{G^n})_{n\ge 1}$ and $(K,d_K)$ into a common metric space $(F,d_F)$, under which the assumptions \eqref{extra}-\eqref{2.8} hold. Since \eqref{extra} gives the convergence of spaces under the Hausdorff metric, \eqref{2.6} gives the convergence of measures under the Prokhorov metric and \eqref{2.7} gives the convergence of paths under $d_{J_1}$, it only remains to check the uniform convergence of local times. Let $\mathcal{C}^n$ be the set of all pairs $(x,x')\in K\times V(G^n)$, for which $d_F(x,x')\le n^{-1}$. Since \eqref{extra} holds, $\mathcal{C}^n$ are correspondences for $n\ge 1$. Then, for $(x,x')\in \mathcal{C}^n$
\[
\sup_{t\in [0,T]} |L_{\beta(n) t}^n(x')-L_t(x)|\le \sup_{\substack{x^n\in V(G^n),x\in K: \\ d_F(x^n,x)<n^{-1}}} \sup_{t\in [0,T]} |L^n_{\beta(n) t}(x^n)-L_t(x)|,
\]
and using \eqref{2.8} completes the proof.

\end{proof}

\section{Blanket time-scaling and distributional bounds} \label{blankbounds}

In this section, we show that under Assumption \ref{Assum1}, and as a consequence of the local time convergence in Lemma \ref{emblem}, we are able to establish asymptotic bounds on the distribution of the blanket times of the graphs in the sequence. The argument for the cover time-scaling was provided first in \cite[Corollary 7.3]{croydon2015moduli} by restricting to the unweighted Sierpi\'nski gasket graphs. The argument is applicable to any other model as long as the relevant assumptions are satisfied.

\subsection{Proof of Theorem \ref{Mth}} \label{Sec2.2}

First, let us check that the $\varepsilon$-blanket time variable of $K$ as written in \eqref{blacken1} is well-defined.   

\begin{proposition} \label{nontrivexp}
Fix $\varepsilon\in (0,1)$. For every $x\in K$, $\mathbf{P}_x$-a.s. we have that $\tau_{\textnormal{bl}}(\varepsilon)\in (0,\infty)$.
\end{proposition}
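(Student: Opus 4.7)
My plan is to establish strict positivity and finiteness of $\tau_{\textnormal{bl}}(\varepsilon)$ separately, both resting on the joint continuity of the local times of $X$ and on properties of the Hunt process.

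For strict positivity, I would show that the blanket condition $L_t(y)\ge \varepsilon t$ for all $y\in K$ must be violated on a non-trivial initial time interval. Since $X$ is a Hunt process with $X_0=x$, right-continuity of paths at $0$ ensures that for any open neighborhood $U$ of $x$ in $K$ the first-exit time $\tau_U$ is $\mathbf{P}_x$-a.s.\ strictly positive. For $s<\tau_U$ the occupation measure of $[0,s]$ is supported in $\overline{U}$; the occupation density formula, together with the continuity of $L_s$ in the spatial variable and the full support of $\pi$, then forces $L_s\equiv 0$ on any open subset of $K$ disjoint from $\overline{U}$. Provided $K$ contains more than one point (which is implicit in the setup, $K$ being the scaling limit of graphs with at least two vertices), one can pick $U$ small enough that such an open set is non-empty; any $y$ in it satisfies $L_s(y)=0<\varepsilon s$ throughout $s\in(0,\tau_U)$, yielding $\tau_{\textnormal{bl}}(\varepsilon)\ge \tau_U>0$, $\mathbf{P}_x$-a.s.

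For finiteness, I plan to invoke the ergodic theorem for the $\pi$-symmetric Hunt process $X$ on the compact space $K$. Under the natural normalization $\int_K L_t(y)\,d\pi(y)=t$ (so that $\pi$ is a probability measure, consistent with the applications), Birkhoff's theorem yields, for each continuous $f:K\to\mathbb{R}$, the $\mathbf{P}_x$-a.s.\ convergence $\int_K f(y) L_t(y)/t\,d\pi(y)\to \int_K f\,d\pi$ as $t\to\infty$; equivalently, the measures $(L_t(\cdot)/t)\,d\pi$ converge weakly to $d\pi$. Upgrading this to the uniform-in-$y$ statement $\sup_{y\in K}|L_t(y)/t-1|\to 0$ almost surely would, in combination with $\varepsilon<1$, furnish a finite random $T^\star$ beyond which $L_t(y)>\varepsilon t$ holds for every $y\in K$, giving $\tau_{\textnormal{bl}}(\varepsilon)\le T^\star<\infty$.

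The main obstacle is precisely this upgrade to uniform-in-$y$ convergence. The assumption on $L$ only delivers joint continuity uniformly over compact time intervals, while the argument needs to let $t\to\infty$; the saving grace is that it is $L_t/t$ (rather than $L_t$ itself) that one needs to control, so the linear growth of $L_t$ should absorb its spatial modulus of continuity after division by $t$. I expect to proceed by covering $K$ with finitely many balls of small radius, applying the pointwise ergodic theorem at the ball centers, and controlling the oscillation of $L_t/t$ on each ball so as to obtain equicontinuity of the family $\{L_t/t: t\ge t_0\}$; compactness of $K$ then delivers the desired uniform ergodic limit and closes the argument.
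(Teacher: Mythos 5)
Your positivity argument is fine (and is essentially immediate from right-continuity of paths plus the occupation density formula; the paper does not even spell it out). The problem is the finiteness half, and the gap sits exactly where you flag it: the upgrade from pointwise (or weak) convergence of $L_t(\cdot)/t$ to the uniform statement $\sup_{y\in K}|L_t(y)/t-\pi(K)^{-1}|\to 0$. Nothing in the standing assumptions gives you this: joint continuity of $L$ is only assumed uniformly over \emph{compact} time intervals, so there is no a priori modulus of continuity for $L_t(\cdot)$ that is $o(t)$ as $t\to\infty$, and your hope that "the linear growth of $L_t$ absorbs its spatial modulus of continuity after division by $t$" is precisely the assertion to be proved, not a consequence of anything you have. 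Your plan (finite cover, ergodic theorem at ball centres, plus "equicontinuity of $\{L_t/t: t\ge t_0\}$") therefore has no mechanism behind its key step. Two further soft spots: Birkhoff's theorem applied to $\frac1t\int_0^t f(X_s)\,ds$ does not by itself give convergence of $L_t(y)/t$ at a \emph{fixed} $y$ (you need a ratio/additive-functional ergodic theorem or a renewal argument, plus an irreducibility/ergodicity input that is not among the stated hypotheses), and the identification of the limit as a constant likewise needs justification.

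The paper closes exactly this gap with a regenerative argument rather than an abstract equicontinuity upgrade. Fixing $x$ and an auxiliary point $y$, it uses that with positive probability $\inf_{z\in B(x,r)}L_{t_*}(z)>\delta$ (joint continuity), and then decomposes time into i.i.d.\ cycles of the form "run for time $t_*$ after hitting $x$, then hit $y$, then return to $x$". Additivity of local times and the strong Markov property give $\inf_{z\in B(x,r)}L_t(z)$ a lower bound by a sum of i.i.d.\ per-cycle infima, so the strong law of large numbers yields $\liminf_t \inf_{z\in B(x,r)}L_t(z)/t \ge \mathbf{E}_y\bigl(\inf_{z\in B(x,r)}L_{\tau_{x,y}(t_*)}(z)\bigr)/\mathbf{E}_y\tau_{x,y}(t_*)$; the occupation density formula together with the commute time identity $\mathbf{E}_x\tau_y+\mathbf{E}_y\tau_x=R(x,y)\pi(K)$ (available because $(K,R)$ is a resistance metric space, and also supplying the finite-mean hitting times your ergodic route would need) identifies the ratio at the centre as $\pi(K)^{-1}$, and continuity of $L$ lets one choose $r$ small so the ball-infimum version exceeds any $\varepsilon_*\in(\varepsilon,1)$. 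Compactness of $K$ then gives the uniform lower bound over all of $K$, hence $\tau_{\textnormal{bl}}(\varepsilon)<\infty$. In short, you need this renewal/regeneration input (or some equivalent quantitative control of the spatial oscillation of $L_t$ at large times); as written, your proposal does not supply it.
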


\begin{proof}
Fix $x,y\in K$. There is a strictly positive $\mathbf{P}_x$-probability that $L_t(x)>0$ for $t$ large enough, which is a consequence of \cite[Lemma 3.6]{marcus1992sample}. From the joint continuity of $(L_t(x))_{x\in K,t\ge 0}$, there exist $r\equiv r(x)$, $\delta\equiv \delta(x)>0$ and $t_{*}\equiv t_{*}(x)<\infty$ such that 
\begin{equation} \label{stem1}
\mathbf{P}_x\left(\inf_{z\in B(x,r)} L_{t_{*}}(z)>\delta\right)>0.
\end{equation}
Now, let $\tau_{x,y}(t_{*}):=\inf\{t>t_{*}+\tau_x: X_t=y\}$, where $\tau_x:=\inf \{t>0: X_t=x\}$ is the hitting time of $x$. In other words, $\tau_{x,y}(t_{*})$ is the first hitting of $y$ by $X$ after time $t_{*}+\tau_x$. The commute time identity for a resistance derived in the proof of \cite[Lemma 2.9]{croydon2016time} yields that $\mathbf{E}_x \tau_y+\mathbf{E}_y \tau_x=R(x,y) \pi(K)$, for every $x,y\in K$, which in turn deduces that $\mathbf{E}_x \tau_y<\infty$. Applying this observation about the finite first moments of hitting times, one can check that $\tau_{x,y}(t_{*})<\infty$, $\mathbf{P}_y$-a.s., and also that
\[
\mathbf{P}_y\left(\inf_{z\in B(x,r)} L_{\tau_{x,y}(t_{*})}(z)>\delta\right)>0.
\]
This simply follows from an application of \eqref{stem1} and the Strong Markov property. The additivity of local times and the Strong Markov property implies that 
\begin{equation} \label{insecure}
\liminf_{t\to \infty} \inf_{z\in B(x,r)} \frac{L_t(z)}{t}\ge \left(\sum_{i=1}^{\infty}\xi_i^1\right) \left(\sum_{i=1}^{\infty} \xi_i^2\right)^{-1},
\end{equation}
where $(\xi_i^1)_{i\ge 1}$ are independent random variables distributed as $\inf_{z\in B(x,r)} L_{\tau_{x,y}(t_{*})}(z)$
and $(\xi_i^2)_{i\ge 1}$ are independent random variables distributed as $\tau_{x,y}(t_{*})$. The strong law of large numbers deduces that the right-hand side of \eqref{insecure} converges to $\mathbf{E}_y \left(\inf_{z\in B(x,r)} L_{\tau_{x,y}(t_{*})}(z)\right) (\mathbf{E}_y \tau_{x,y}(t_{*}))^{-1}$, $\mathbf{P}_x$-a.s. Using the occupation density formula and the commute time identity for a resistance, observe that 
\begin{align*}
\mathbf{E}_y L_{\tau_{x,y}(t_{*})}(x) \cdot (\mathbf{E}_y \tau_{x,y}(t_{*}))^{-1}=\mathbf{E}_x L_{\tau_y}(x) \cdot (\mathbf{E}_y \tau_{x,y}(t_{*}))^{-1}
=R(x,y)\cdot (R(x,y) \pi(K))^{-1}=\pi(K)^{-1},
\end{align*}
and therefore, fixing some $\varepsilon_{*}\in (\varepsilon,1)$, the joint continuity of the local time process lets it to be deduced that
\begin{equation} \label{stem2}
\liminf_{t\to \infty} \inf_{z\in B(x,r)} \frac{L_t(z)}{t}\ge \varepsilon_{*},
\end{equation}
$\mathbf{P}_x$-a.s. To extend this statement that holds uniformly over $B(x,r)$, $\mathbf{P}_x$-a.s., we use the compactness of $K$. Consider the open cover $(B(x,r))_{x_i\in K}$ the open cover for $K$, which say admits a finite subcover $(B(x_i,r_i))_{i=1}^{N}$. Since the right-hand of \eqref{stem2} is greater than $\varepsilon_{*}$, $\mathbf{P}_x$-a.s., the result clearly follows as
\[
\lim_{t\to \infty} \frac{L_t(x)}{t}=\min_{1\le i\le N} \lim_{t\to \infty} \inf_{x\in B(x_i,r_i)} \frac{L_t(x)}{t}.
\]  

\end{proof}

We are now ready to prove one of our main results.

\begin{proof}[Proof of Theorem \ref{Mth}]
Let $\varepsilon\in (0,1)$, $\delta\in (0,1)$ and $t\in [0,T]$. Suppose that $t<{\tau}_{\textnormal{bl}}(\varepsilon(1-\delta))$. Then, there exists a $y\in K$ for which $L_t(y)<\varepsilon (1-\delta) t$. Using the Skorohod representation theorem, we can suppose that the conclusions of Lemma \ref{lem2.2} holds in an almost-surely sense. From \eqref{extra}, there exists $y^n\in V(G^n)$ such that, for $n$ large enough, $d_F(y^n,y)<2 \varepsilon$. Then, the local convergence at \eqref{2.8} implies that, for $n$ large enough
\[
\alpha(n) L_{\beta(n) t}^n(y^n)\le L_t(y)+\varepsilon \delta t.
\]
Thus, for $n$ large enough, it follows that $\alpha(n) L_{\beta(n) t}^n(y^n)\le L_t(y)+\varepsilon \delta t< \varepsilon t$. Using the time and space scaling identity, we deduce $m^{G^n} L^n_{\beta(n) t}(y^n)<\varepsilon \beta(n) t$, for $n$ large enough, which in turn implies that $\beta(n) t\le \tau_{\text{bl}}^n(\varepsilon)$, for $n$ large enough. As a consequence, we get that $\tau_{\text{bl}}(\varepsilon (1-\delta))\le \liminf_{n\rightarrow \infty} \beta(n)^{-1} \tau_{\text{bl}}^n(\varepsilon)$, which proves \eqref{blacken2}.

Assume now that $\tau_{\text{bl}}(\varepsilon(1+\delta))<t$. Then, for some $\tau_{\text{bl}}(\varepsilon(1+\delta))\le t_0<t$, it is the case that $L_{t_0}(x)\ge \varepsilon(1+\delta) t_0$, for every $y\in K$. As in the previous paragraph, using the Skorohod represantation theorem, we suppose that the conclusions of Lemma \ref{lem2.2} holds almost-surely. From \eqref{extra}, for every $y^n\in V(G^n)$, there exists a $y\in K$ such that, for $n$ large enough, $d_F(y^n,y)<2 \varepsilon$. From the local convergence statement at \eqref{2.8}, we have that, for $n$ large enough
\[
\alpha(n) L_{\beta(n) t_0}^n(y^n)\ge L_{t_0}(y)-\varepsilon \delta t_0.
\] 
Therefore, for $n$ large enough, it follows that $\alpha(n) L^n_{\beta(n) t_0}(y^n)\ge L_{t_0}(y)-\varepsilon \delta t_0\ge \varepsilon t_0$, for every $y\in K$. As before, using the time and space scaling identity yields $m^{G^n} L^n_{\beta(n) t_0}(y^n)\ge \varepsilon \beta(n) t_0$, for every $y^n\in V(G^n)$ and large enough $n$, which in turn implies that $\beta(n) t_0\ge \tau^n_{\text{bl}}(\varepsilon)$, for $n$ large enough. As consequence we get that $\limsup_{n\rightarrow \infty} \beta(n)^{-1} \tau^n_{\text{bl}}(\varepsilon)\le \tau_{\text{bl}}(\varepsilon(1+\delta))$, from which \eqref{blacken3} follows by observing that $\tau_{\text{bl}}$ is right-continuous at $\varepsilon$.

\end{proof} 

\subsection{Local time convergence}

To check that Assumption \ref{Assum1} holds we need to verify that the convergence of the rescaled local times in \eqref{2.8}, as suggested by Lemma \ref{lem2.2}. Due to work done in a more general framework in \cite{croydon2016time}, we can weaken the local convergence statement of \eqref{2.8} and replace it by the equicontinuity condition of \eqref{2.11}. In \eqref{resist2} we defined a resistance metric on a graph viewed as an electrical network. Next, we give the definition of a regular resistance form and its associated resistance metric for arbitrary non-empty sets, which is a combination of \cite[Definition 2.1]{croydon2016time} and \cite[Definition 2.2]{croydon2016time}.

\begin{definition} \label{resistf}
Let $K$ be a non-empty set. A pair $(\mathcal{E},\mathcal{K})$ is called a regular resistance form on $K$ if the following six conditions are satisfied.
\begin{enumerate}[label=\roman{*})]

\item $\mathcal{K}$ is a linear subspace of the collection of functions $\{f:K\to \mathbb{R}\}$ containing constants, and $\mathcal{E}$ is a non-negative symmetric quadratic form on $\mathcal{K}$ such that $\mathcal{E}(f,f)=0$ if and only if $f$ is constant on $K$.

\item Let $\sim$ be an equivalence relation on $\mathcal{K}$ defined by saying $f\sim g$ if and only if the difference $f-g$ is constant on $K$. Then, $(\mathcal{K}/\sim,\mathcal{E})$ is a Hilbert space.

\item If $x\neq y$, there exists $f\in \mathcal{K}$ such that $f(x)\neq f(y)$.

\item For any $x,y\in K$,
\[
R(x,y):=\sup \left\{\frac{|f(x)-f(y)|^2}{\mathcal{E}(f,f)}: f\in \mathcal{K}, \ \mathcal{E}(f,f)>0\right\}<\infty.
\]

\item If $\bar{f}:=(f\wedge 1)\vee 0$, then $f\in \mathcal{K}$ and $\mathcal{E}(\bar{f},\bar{f})\le \mathcal{E}(f,f)$ for any $f\in \mathcal{K}$.

\item If $\mathcal{K}\cap C_0(K)$ is dense in $C_0(K)$ with respect to the supremum norm on $K$, where $C_0(K)$ denotes the space of compactly supported, continuous (with respect to $R$) functions on $K$.

\end{enumerate}
\end{definition}

It is the first five conditions that have to be satisfied in order for the pair $(\mathcal{E},\mathcal{K})$ to define a resistance form. If in addition the sixth condition is satisfied then $(\mathcal{E},\mathcal{K})$ defines a regular resistance form. Note that the fourth condition can be rewritten as $R(x,y)^{-1}=\inf\{\mathcal{E}(f,f): f:K\to \mathbb{R},f(x)=0,f(y)=1\}$, and it can be proven that it is actually a metric on $K$ (see \cite[Proposition 3.3]{kigami2012resistance}).  It also clearly resembles the effective resistance on $V(G)$ as defined in \eqref{resist2}. More specifically, taking $\mathcal{K}:=\{f:V(G)\to \mathbb{R}\}$ and $\mathcal{E}_G$ as defined in \eqref{resist1} one can prove that the pair $(\mathcal{E}_G,\mathcal{K})$ satisfies the six conditions of Definition \ref{resistf}, and therefore is a regular resistance form on $V(G)$ with associated resistance metric given by \eqref{resist2}. For a detailed proof of this fact see \cite[Example 1.2.5]{fukushima2010dirichlet}. Finally, in this setting given a regular Dirichlet form, standard theory gives us the existence of an associated Hunt process $X=((X_t)_{t\ge 0},\mathbf{P}_x,x\in K)$ that is defined uniquely everywhere (see \cite[Theorem 7.2.1]{fukushima2010dirichlet} and \cite[Theorem 9.9]{kigami2012resistance}).

Suppose that the discrete state spaces $(V(G^n))_{n\ge 1}$ are equipped with resistances $(R_{G^n})_{n\ge 1}$ as defined in \eqref{resist2} and that the limiting non-empty metric space $K$, that appears in Assumption \ref{Assum1}, is equipped with a resistance metric $R$ as in Definition \ref{resistf}, such that 
\begin{itemize}

\item $(K,R)$ is compact,

\item $\pi$ is a Borel measure of full support on $(K,R)$,

\item $X=((X_t)_{t\ge 0},\mathbf{P}_x,x\in K)$ admits local times $L=(L_t(x))_{x\in K,t\ge 0}$ continuous at $x$, uniformly over compact intervals in $t$, $\mathbf{P}_x$ -a.s. for every $x\in K$. 
\end{itemize}

In the following extra assumption we input the information encoded in the first three conclusions of Lemma \ref{lem2.2}, given that we work in a probabilistic setting instead. For simplicity as before we identify the various objects with their embeddings.

\begin{assum} \label{Assum3}
Fix $T>0$. Let $(G^n)_{n\ge 1}$ be a sequence of finite connected graphs that have at least two vertices, for which there exist sequences of real numbers $(\alpha(n))_{n\ge 1}$ and $(\beta(n))_{n\ge 1}$, such that
\[
\left(\left(V(G^n),\alpha(n) R_{G^n},\rho^n\right),\pi^n,\left(X^n_{\beta(n) t}\right)_{t\in [0,T]}\right)\longrightarrow \left(\left(K,R,\rho\right),\pi,X\right)
\]
in the sense of the pointed extended pointed Gromov-Hausdorff topology, where $\rho^n\in V(G^n)$ and $\rho\in K$ are distinguished points. Furthermore, suppose that for every $\varepsilon>0$ and $T>0$,
\begin{equation} \label{emb3}
\lim_{\delta\to 0} \limsup_{n\to \infty} \sup_{x\in V(G^n)} \mathbf{P}_x^n\left(\sup_{\substack{y,z\in V(G^n): \\ R_{G^n}(y,z)<\delta}} \sup_{t\in [0,T]} \alpha(n) |L_{\beta(n) t}^n(y)-L_{\beta(n) t}^n(z)|\ge \varepsilon\right)=0.
\end{equation}
\end{assum}
It is Assumption \ref{Assum3} we have to verify in the examples of random graphs we will consider later. As we prove below in the last lemma of this subsection, if Assumption \ref{Assum3} holds then the finite dimensional local times converge in distribution (see \eqref{2.8}). Given that $(V(G^n),R_{G^n})_{n\ge 1}$ and $(K,R)$ can be isometrically embedded into a common metric space $(F,d_F)$ such that $X^n$ under $\mathbf{P}_{\rho^n}^n$ converges weakly to the law of $X$ under $\mathbf{P}_{\rho}$ on $D([0,T],F)$ (see Lemma \ref{lem2.2}), 
we can couple $X^n$ started from $\rho^n$ and $X$ started from $\rho$ into a common probability space such that $(X^n_{\beta(n) t})_{t\in [0,T]}\to (X_t)_{t\in [0,T]}$ in $D([0,T],F)$, almost-surely.
Denote by $\mathbf{P}$ the joint probability measure under which the convergence above holds. Proving the convegence of finite dimensional distributions of local times is then an application of three lemmas that appear in \cite{croydon2016time}, which we summarize below.

\begin{lemma} [\textbf{Croydon, Hambly, Kumagai \cite{croydon2016time}}] \label{three}
For every $x\in F$, $\delta>0$, introduce the function $f_{\delta,x}(y):=\max\{0,\delta-d_F(x,y)\}$. Then, under Assumption \ref{Assum3},
\begin{enumerate}[label=\roman{*})]

\item $\mathbf{P}$-a.s., for each $x\in K$ and $T>0$, as $\delta\to 0$,

\[
\sup_{t\in [0,T]} \left|\frac{\int_{0}^{t} f_{\delta,x}(X_s) ds}{\int_K f_{\delta,x}(y) \pi(dy)}
-L_t(x)
\right|\to 0.
\]
  
\item $\mathbf{P}$-a.s., for each $x\in K$, $T>0$ and $\delta>0$, as $n\to \infty$,

\[
\sup_{t\in [0,T]} \left|\frac{\int_{0}^{t} f_{\delta,x}(X_s) ds}{\int_K f_{\delta,x}(y) \pi(dy)}
-\frac{\int_{0}^{t} f_{\delta,x}(X_{\beta(n) s}^n) ds}{\int_{V(G^n)} f_{\delta,x}(y) \pi^n(dy)}
\right|\to 0.
\]

\item For each $x\in K$ and $T>0$, if $x^n\in V(G^n)$ is such that $d_F(x^n,x)\to 0$, as $n\to \infty$, then 

\[
\lim_{\delta\to 0} \limsup_{n\to \infty} \mathbf{P}\left(\sup_{t\in [0,T]} \left|\frac{\int_{0}^{t} f_{\delta,x}(X_{\beta(n) s}^n) ds}{\int_{V(G^n)} f_{\delta,x}(y) \pi^n(dy)}-\alpha(n) L_{\beta(n) t}^n(x^n)\right|>\varepsilon \right)=0.
\]
\end{enumerate}
\end{lemma}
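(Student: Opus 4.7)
The plan is to treat the three statements as a telescoping chain that lets us approximate $\alpha(n) L^n_{\beta(n)t}(x^n)$ by $L_t(x)$ through an intermediate averaged quantity built from the functionals $f_{\delta,x}$. For part (i), I would invoke the occupation density formula for $X$, which gives
\[
\int_0^t f_{\delta,x}(X_s)\,ds = \int_K f_{\delta,x}(y) L_t(y)\,\pi(dy),
\]
so the ratio in the statement is an $f_{\delta,x}$-weighted average of $L_t(\cdot)$ on the ball $B(x,\delta)$. Joint continuity of $L$ on the compact set $K \times [0,T]$ upgrades to uniform continuity, so the oscillation of $y \mapsto L_t(y)$ on $B(x,\delta)$ vanishes as $\delta \to 0$, uniformly in $t \in [0,T]$, giving the required convergence to $L_t(x)$.

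For part (ii), fix $\delta>0$ and note that $f_{\delta,x}$ is bounded and Lipschitz on $F$. Under the Skorohod coupling described just before the lemma, $X^n_{\beta(n)\cdot} \to X_\cdot$ almost surely in $D([0,T],F)$, which combined with the continuity of $f_{\delta,x}$ and a standard dominated convergence argument yields $\int_0^\cdot f_{\delta,x}(X^n_{\beta(n)s})\,ds \to \int_0^\cdot f_{\delta,x}(X_s)\,ds$ uniformly on $[0,T]$. For the denominators, weak convergence $\pi^n \to \pi$ (part of Assumption \ref{Assum3}) gives $\int f_{\delta,x}\,d\pi^n \to \int f_{\delta,x}\,d\pi$, and the limit is strictly positive because $\pi$ has full support and $f_{\delta,x}$ is positive on a neighbourhood of $x$. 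Hence the ratio converges uniformly in $t$.

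For part (iii), the key computation is the discrete occupation density identity: using $\pi^n(\{y\}) = \mu_y^n/m^{G^n}$ and the scaling $\alpha(n)\beta(n) = m^{G^n}$, one verifies
\[
\int_0^t f_{\delta,x}(X^n_{\beta(n)s})\,ds = \alpha(n) \int_{V(G^n)} f_{\delta,x}(y) L^n_{\beta(n) t}(y)\,\pi^n(dy),
\]
so the discrete ratio equals $\alpha(n)$ times a $\pi^n$-weighted average of $L^n_{\beta(n)t}(\cdot)$ over points within $F$-distance $\delta$ of $x$. Since $d_F(x^n,x)\to 0$, the difference between this average and $\alpha(n) L^n_{\beta(n)t}(x^n)$ is bounded by the oscillation of $\alpha(n) L^n_{\beta(n)t}(\cdot)$ over a ball of radius roughly $2\delta$ around $x^n$, which under the embeddings translates into the oscillation in the resistance metric of comparable scale via Lemma \ref{lem2.2}. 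Letting $\delta \to 0$ and invoking the equicontinuity condition \eqref{emb3} in Assumption \ref{Assum3} controls this term in probability, which is exactly what part (iii) asserts. The main obstacle is part (iii): parts (i) and (ii) are soft consequences of continuity and weak convergence, whereas (iii) requires the algebraic manipulation above to match the discrete ratio with $\alpha(n) L^n_{\beta(n)t}(x^n)$ and then an honest use of the probabilistic equicontinuity assumption to absorb the residual oscillation uniformly in $n$.
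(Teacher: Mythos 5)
The paper itself offers no proof of this lemma: it is imported verbatim from Croydon, Hambly and Kumagai \cite{croydon2016time} (the surrounding text says the three statements ``appear in'' that reference), so there is no internal argument to compare against. Your sketch reconstructs essentially the argument used in that source, and it is correct in outline: (i) is the continuum occupation density identity plus uniform continuity of $L$ on compacts; (ii) follows from the almost-sure $J_1$ convergence of the coupled paths together with weak convergence of $\pi^n$ to $\pi$ and positivity of $\int_K f_{\delta,x}\,d\pi$ (full support); (iii) rests on the discrete occupation identity, the relation $\alpha(n)\beta(n)=m^{G^n}$, and the probabilistic equicontinuity \eqref{emb3}. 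Two small points deserve explicit care in a written-out version. First, the discrete identity $\int_0^t f_{\delta,x}(X^n_{\beta(n)s})\,ds=\alpha(n)\int_{V(G^n)}f_{\delta,x}(y)\,L^n_{\beta(n)t}(y)\,\pi^n(dy)$ is exact only at times $t$ with $\beta(n)t$ an integer (with the step-function convention for the path); for general $t$ there is a rounding/interpolation error of order $\beta(n)^{-1}\|f_{\delta,x}\|_\infty$, which vanishes but should be acknowledged. Second, in (iii) one should note that the ratio of the weighted average to $\alpha(n)L^n_{\beta(n)t}(x^n)$ is controlled by the oscillation over $B_F(x,\delta)\cap V(G^n)$ as soon as the denominator is strictly positive, which holds for all large $n$ by the Hausdorff convergence of the embedded spaces; and the scale in \eqref{emb3} should be read as a bound on the oscillation over pairs with rescaled resistance distance below $\delta$ (as it is used in the paper's applications), which is exactly what the $d_F$-ball of radius roughly $2\delta$ gives via the isometric embeddings. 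With these remarks your proposal is a faithful, correct version of the cited proof.
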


By applying the conclusions of Lemma \ref{three}, one deduces that for any $x\in K$ and $T>0$, if $x^n\in V(G^n)$ such that $d_F(x^n,x)\to 0$, as $n\to \infty$, then $(\alpha(n) L^n_{\beta(n) t}(x^n))_{t\in [0,T]}\to (L_t(x))_{t\in [0,T]}$ in $\mathbf{P}$-probability in $C([0,T],\mathbb{R})$. This result extends to finite collections of points, and this is enough to establish the convergence of finite dimensional distributions of local times.

\begin{lemma} \label{emblem}
Suppose that Assumption \ref{Assum3} holds. Then, if the finite collections $(x_i^n)_{i=1}^{k}$ in $V(G^n)$, for $n\ge 1$, are such that $d_F(x_i^n,x_i)\to 0$, as $n\to \infty$, for some $(x_i)_{i=1}^{k}$ in $K$, then it holds that 
\begin{equation} \label{emb4}
(\alpha(n) L_{\beta(n) t}^n(x_i^n))_{i=1,...,k,t\in [0,T]}\to (L_t(x_i))_{i=1,...,k,t\in [0,T]},
\end{equation}
in distribution in $C([0,T],\mathbb{R}^k)$.
\end{lemma}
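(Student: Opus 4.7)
The plan is to reduce the joint convergence for $k$ points to the single-point statement that essentially follows from Lemma \ref{three}, and then to upgrade this to joint convergence by the triangle inequality and a union bound.

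First, I would fix an index $i \in \{1,\ldots,k\}$ and a parameter $\delta > 0$, and use the triangle inequality on the common probability space $\mathbf{P}$ under which the coupled paths converge to split
\[
\sup_{t \in [0,T]} \bigl|\alpha(n)\, L^n_{\beta(n) t}(x_i^n) - L_t(x_i)\bigr|
\]
into the three differences appearing in parts (i), (ii), (iii) of Lemma \ref{three} (with $x = x_i$, $x^n = x_i^n$). Item (i) of Lemma \ref{three} shows that the first difference tends to $0$ almost surely as $\delta \to 0$, uniformly in $t \in [0,T]$; for fixed $\delta$, item (ii) shows that the second difference tends to $0$ almost surely as $n \to \infty$; and item (iii) shows that the double limit $\lim_{\delta \to 0} \limsup_{n \to \infty}$ of the $\mathbf{P}$-probability of the last difference exceeding any $\varepsilon > 0$ equals $0$. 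Combining these via a standard $\varepsilon/3$-argument---first choosing $\delta$ so small that (i) and (iii) each contribute at most $\varepsilon/3$ in probability, and then $n$ so large that (ii) contributes at most $\varepsilon/3$ almost surely---yields
\[
\bigl(\alpha(n)\, L^n_{\beta(n) t}(x_i^n)\bigr)_{t \in [0,T]} \longrightarrow (L_t(x_i))_{t \in [0,T]}
\]
in $\mathbf{P}$-probability in $C([0,T], \mathbb{R})$, for each individual $i$.

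To pass from single-coordinate convergence in probability to joint convergence in probability in $C([0,T], \mathbb{R}^k)$ (equipped with, say, the supremum norm induced by the $\ell^\infty$ distance on $\mathbb{R}^k$), I would simply use the bound
\[
\sup_{t \in [0,T]} \max_{1 \le i \le k} \bigl|\alpha(n)\, L^n_{\beta(n) t}(x_i^n) - L_t(x_i)\bigr| \le \sum_{i=1}^{k} \sup_{t \in [0,T]} \bigl|\alpha(n)\, L^n_{\beta(n) t}(x_i^n) - L_t(x_i)\bigr|,
\]
and a union bound over $i = 1, \ldots, k$: since each summand on the right-hand side converges to $0$ in $\mathbf{P}$-probability, so does the left-hand side. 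Because convergence in probability in $C([0,T],\mathbb{R}^k)$ implies convergence in distribution, and the joint law of the limit $(L_t(x_i))_{i,t}$ is the same regardless of the coupling used, this delivers the claimed distributional convergence \eqref{emb4}.

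The main technical obstacle, namely controlling the gap between the local time and its spatially-smoothed occupation-time approximation via the equicontinuity input \eqref{emb3} of Assumption \ref{Assum3}, is already packaged into part (iii) of Lemma \ref{three}; once that is available, the extension from one point to a finite collection is essentially routine bookkeeping using the triangle inequality and a union bound, as sketched above.
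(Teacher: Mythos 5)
Your proposal is correct and matches the paper's treatment: the paper likewise obtains the single-point convergence in $\mathbf{P}$-probability in $C([0,T],\mathbb{R})$ by combining parts (i)--(iii) of Lemma \ref{three} under the common coupling, and then extends to finite collections of points, which yields the distributional convergence in $C([0,T],\mathbb{R}^k)$. Your write-up merely makes explicit the $\varepsilon/3$ and union-bound bookkeeping that the paper leaves implicit.
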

\section{Examples} \label{maintheorems}

In this section we demonstrate that it is possible to apply our main results in a number of examples where the graphs, and the limiting spaces are random. These examples include critical Galton-Watson trees, the critical Erd\H{o}s-R\'enyi random graph and the critical regime of the configuration model. The aforementioned models of sequences of random graphs exhibit a mean-field behavior at criticality in the sense that the scaling exponents for the walks, and consequently for the local times, are a multiple of the volume and the diameter of the graphs. In the first few pages of each subsection we quickly survey some of the key features of each example that will be helpful when verifying Assumption \ref{Assum3}. Our method used in proving continuity of the blanket time of the limiting diffusion is generic in the sense that it applies on each random metric measure space and a corresponding $\sigma$-finite measure that generates realizations of the random metric measure space in such a way that rescaling the $\sigma$-finite measure by a constant factor results in generating the same space with its metric and measure perturbed by a multiple of this constant factor. 

\subsection{Critical Galton-Watson trees} \label{CGWT}

We start by briefly describing the connection between critical Galton-Watson trees and the Brownian continuum random tree (CRT). Let $\xi$ be a mean 1 random variable with variance $0<\sigma_{\xi}^2<+\infty$, whose distribution is aperiodic (its support generates the lattice $\mathbb{Z}$, not just a strict subgroup of $\mathbb{Z}$). Let $\mathcal{T}_n$ be a Galton-Watson tree with offspring distribution $\xi$ conditioned to have total number of vertices $n+1$, which is well-defined from the aperiodicity of the distribution of $\xi$. Then, it is the case that 
\begin{equation} \label{con1}
\left(V(\mathcal{T}_n),n^{-1/2} d_{\mathcal{T}_n}\right)\rightarrow \left(\mathcal{T}_e,d_{\mathcal{T}_e}\right),
\end{equation}
in distribution with respect to the Gromov-Hausdorff distance between compact metric spaces, where $d_{\mathcal{T}_n}$ is the shortest path distance on the vertex set $V(\mathcal{T}_n)$ (see \cite{aldous1993continuum} and \cite{le1993uniform}). To describe the limiting object in \eqref{con1}, let $(e_t)_{0\le t\le 1}$ denote the normalised Brownian excursion, which is informally a linear Brownian motion, started from zero, conditioned to remain positive in $(0,1)$ and come back to zero at time 1. We extend the definition of $(e_t)_{0\le t\le 1}$ by setting $e_t=0$, if $t>1$. We define a distance $d_{\mathcal{T}_e}$ in $[0,1]$ by setting 
\begin{equation} \label{dist}
d_{\mathcal{T}_e}(s,t)=e(s)+e(t)-2 \min_{r\in [s\wedge t,s\vee t]} e(r).
\end{equation}
Introducing the equivalence relation $s\sim t$ if and only if $e(s)=e(t)=\min_{r\in [s\wedge t,s\vee t]} e(r)$ and defining $\mathcal{T}_e:=[0,1]/\sim $ it is possible to check that $(\mathcal{T}_e,d_{\mathcal{T}_e})$ is almost-surely a compact metric space. Moreover, $(\mathcal{T}_e,d_{\mathcal{T}_e})$ is a random real tree , called the CRT. For the notion of compact real trees coded by functions and a proof of the previous result see \cite[Section 2]{le2006random}. There is a natural Borel measure upon $\mathcal{T}_e$, $\pi^e$ say, which is the image measure on $\mathcal{T}_e$ of the Lebesgue measure on $[0,1]$ by the canonical projection of $[0,1]$ onto $\mathcal{T}_e$. 

Upon almost-every realization of the metric measure space $(\mathcal{T}_e,d_{\mathcal{T}_e},\pi^e)$, it is possible to define a corresponding Brownian motion $X^e$. The way this can be done is described in \cite[Section 2.2]{croydon2008convergence}. Now if we denote by $\mathbf{P}_{\rho^n}^{\mathcal{T}_n}$ the law of the simple random walk in $\mathcal{T}_n$, started from a distinguished point $\rho^n$, and by $\pi^n$ the stationary probability measure, then as it was shown in \cite{croydon2010scaling} the scaling limit in \eqref{con1} can be extended to the distributional convergence of 
\[
\bigg(\big(V(\mathcal{T}_n),n^{-1/2} d_{\mathcal{T}_n},\rho^n\big),\pi^n(n^{1/2} \cdot ),\mathbf{P}_{\rho^n}^{\mathcal{T}_n}\big((n^{-1/2} X^n_{\lfloor n^{3/2} t \rfloor})_{t\in [0,1]}\in \cdot \big)\bigg)
\]
to $\bigg(\big(\mathcal{T}_e,d_{\mathcal{T}_e},\rho\big),\pi^e,\mathbf{P}_{\rho}^e\bigg)$, where $\mathbf{P}_{\rho}^e$ is the law of $X^e$, started from a distinguished point $\rho$. This convergence described in \cite{croydon2010scaling} holds after embedding all the relevant objects nicely into a Banach space. We can reformulate this result in terms of the pointed extended Gromov-Hausdorff topology that incorporates distinguished points. Namely
\begin{equation}\label{con2}
\left(\left(V(\mathcal{T}_n),n^{-1/2} d_{\mathcal{T}_n},\rho^n\right),\pi^n,\left(n^{-1/2} X^n_{\lfloor n^{3/2} t \rfloor}\right)_{t\in [0,1]}\right)\longrightarrow \left(\left(\mathcal{T}_e,d_{\mathcal{T}_e},\rho\right),\pi^e,\left(X^e_{(\sigma_{\xi}/2) t}\right)_{t\in [0,1]}\right),
\end{equation}
in distribution in an extended pointed Gromov-Hausdorff sense. 

Next, we introduce the contour function of $\mathcal{T}_n$. Informally, it encodes the trace of the motion of a particle that starts from the root at time $t=0$ and then explores the tree from left to right, moving continuously at unit speed along its edges. Formally, we define a function first for integer arguments as follows:
\[
f(0)=\rho^n.
\]
Given $f(i)=v$, we define $f(i+1)$ to be, if possible, the leftmost child that has not been visited yet, let's say $w$. If no child is left unvisited, we let $f(i+1)$ be the parent of $v$. Then, the contour function of $\mathcal{T}_n$, is defined as the distance of $f(i)$ from the root $\rho_n$, i.e.
\[
V_n(i):=d_{\mathcal{T}_n}(\rho^n,f(i)), \qquad 0\le i\le 2 n.
\]
We extend $V_n$ continuously to $[0,2 n]$ by interpolating linearly between integral points. The following theorem is due to Aldous.

\begin{theorem} [\textbf{Aldous \cite{aldous1993continuum}}]
Let $v_n$ denote the normalized contour function of $\mathcal{T}_n$, defined by 
\[
v_n(s):=\frac{V_n(2 n s)}{\sqrt{n}}, \qquad 0\le s\le 1.
\]
Then, the following convergence holds in distribution in $C([0,1])$:
\[
v_n\xrightarrow{(d)} v:=\frac{2}{\sigma_{\xi}} e,
\] 
where $e$ is a normalized Brownian excursion.
\end{theorem}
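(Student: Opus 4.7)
The plan is to encode the tree by its depth-first walk, reduce the problem to the convergence of a conditioned random walk to the normalized Brownian excursion, and then transfer the limit from that walk to the contour process $V_n$. Concretely, order the vertices $v_0=\rho^n,v_1,\ldots,v_n$ of $\mathcal{T}_n$ in depth-first order and define the Lukasiewicz walk $W_n(k):=\sum_{i=0}^{k-1}(c(v_i)-1)$, where $c(v)$ denotes the number of children of $v$. A classical bijection between ordered rooted trees and lattice paths shows that, for a Galton-Watson tree with offspring distribution $\xi$, conditionally on having total progeny $n+1$, $W_n$ has the law of a random walk with i.i.d.\ increments distributed as $\xi-1$, killed at the first hitting time of $-1$, conditioned so that this killing time equals $n+1$.

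Next, Donsker's invariance principle combined with a Vervaat-type excursion decomposition yields
\[
\left(n^{-1/2}W_n(\lfloor nt\rfloor)\right)_{t\in[0,1]}\xrightarrow{(d)}\sigma_\xi\,e
\]
in $C([0,1])$. The standard route is: Donsker converts the unconditioned walk (whose steps have variance $\sigma_\xi^2$) into a Brownian motion with variance $\sigma_\xi^2$; the conditioning that $W_n$ first hits $-1$ at time $n+1$ is the discrete counterpart of Vervaat's conditioning that turns a Brownian bridge into a normalized Brownian excursion; the aperiodicity of $\xi$ delivers the local CLT estimates required to show that this conditioning is asymptotically compatible with Brownian scaling.

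The contour process is then obtained as a deterministic functional of $W_n$. I would first pass from $W_n$ to the height process $H_n(k):=d_{\mathcal{T}_n}(\rho^n,v_k)$ via the combinatorial identity
\[
H_n(k)=\#\left\{0\le j<k:\ W_n(j)=\min_{j\le i\le k}W_n(i)\right\},
\]
and show, following Marckert--Mokkadem or Duquesne--Le Gall, that $n^{-1/2}H_n(\lfloor nt\rfloor)\to (2/\sigma_\xi)\,e(t)$ in $C([0,1])$: the factor $2/\sigma_\xi$ appears because $\sigma_\xi^2$ enters twice, once through the Brownian scaling of $W_n$ itself and once through the reciprocal scaling of the successive-minima functional. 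Finally, since the contour function at time $2k$ equals $H_n(k)$ up to an $O(1)$ time re-indexing (each edge is traversed twice) and interpolates linearly in between, this convergence upgrades to $v_n\to (2/\sigma_\xi)\,e$ in distribution in $C([0,1])$ with the time rescaling $s\mapsto 2ns$ absorbing the factor two.

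The main obstacle is the passage through the conditioning in the second step: showing that the conditioned Lukasiewicz walk converges in the uniform topology (rather than just in finite-dimensional distributions) to the normalized Brownian excursion requires local limit estimates under aperiodicity together with uniform tightness of the conditioned walks. A secondary, more technical difficulty is the uniform control of the discrete time-change from $H_n$ to $V_n$; one verifies that, modulo $O(1)$ fluctuations, the contour index $2k$ corresponds to the height index $k$, so no further rescaling is required beyond the factor $2$ already built into the contour time-scale $[0,2n]$, and the discrete modulus of continuity of $V_n$ at scale $n^{1/2}$ is inherited from that of $H_n$.
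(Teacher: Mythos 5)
The paper does not prove this statement at all: it is quoted as a known result of Aldous, and the only related material in the paper is a later remark pointing to \cite[Section 1.6]{legall2005applications} for the comparison between contour and height processes. So the comparison is really with Aldous's original argument. Your route is the now-standard one of Marckert--Mokkadem and Duquesne--Le Gall: encode $\mathcal{T}_n$ by the Lukasiewicz walk, prove an invariance principle for the walk conditioned on its first hitting time of $-1$ (Donsker plus a Vervaat-type decomposition, with the aperiodicity of $\xi$ feeding local limit estimates), transfer to the height process through the identity $H_n(k)=\#\{j<k:\ W_n(j)=\min_{j\le i\le k}W_n(i)\}$ with the factor $2/\sigma_\xi$ arising as $({2}/{\sigma_\xi^2})\cdot\sigma_\xi$, and finally pass to the contour process by a time change. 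This is genuinely different from Aldous's 1993 proof, which establishes the limit by computing the asymptotic law of the reduced subtrees spanned by finitely many uniformly sampled vertices and supplementing this with a tightness argument; your approach has the advantage of being functional from the start and of generalizing directly (e.g.\ to the stable setting via Duquesne--Le Gall), while Aldous's gives the finite-dimensional tree structure of the CRT more explicitly. One imprecision to fix: the vertex $v_k$ is visited by the contour process at time $b_k=2k-H_n(k)$, so the time re-indexing between $V_n$ and $H_n$ is of order the height, i.e.\ $O_P(\sqrt{n})$, not $O(1)$; the argument still goes through because this discrepancy is $o(n)$, hence vanishes after dividing time by $2n$, and the modulus-of-continuity control on $H_n$ absorbs it --- exactly the estimate in \cite[Section 1.6]{legall2005applications} that the paper itself cites.
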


An essential tool in what follows will be a universal concentration estimate of the fluctuations of local times that holds uniformly over compact time intervals. For the statement of this result let 
\[
r(\mathcal{T}_n):=\sup_{x,y\in V(\mathcal{T}_n)} d_{\mathcal{T}_n}(x,y)
\]
denote the diameter of $\mathcal{T}_n$ in the resistance metric and $m(\mathcal{T}_n)$ denote the total mass of $\mathcal{T}_n$. Also, we introduce the rescaled resistance metric $\tilde{d}_{\mathcal{T}_n}(x,y):=r(\mathcal{T}_n)^{-1} d_{\mathcal{T}_n}(x,y)$.

\begin{theorem} [\textbf{Croydon \cite{croydon2015moduli}}] \label{prepar1}
For every $T>0$, there exist constants $c_1$ and $c_2$ not depending on $\mathcal{T}_n$ such that 
\begin{equation} \label{proof1}
\sup_{y,z\in V(\mathcal{T}_n)} \mathbf{P}_{\rho^n}^{\mathcal{T}_n} \left(r(\mathcal{T}_n)^{-1} \sup_{t\in [0,T]} \left|L_{r(\mathcal{T}_n) m(\mathcal{T}_n) t}^n(y)-L_{r(\mathcal{T}_n) m(\mathcal{T}_n) t}^n(z) \right|\ge \lambda \sqrt{\tilde{d}_{\mathcal{T}_n}(y,z)}\right)\le c_1 e^{-c_2 \lambda}
\end{equation}
for every $\lambda\ge 0$. Moreover, the constants can be chosen in such a way that only $c_1$ depends on $T$.
\end{theorem}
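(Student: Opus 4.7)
The plan is to apply the generalized second Ray--Knight isomorphism theorem of Eisenbaum, Kaspi, Marcus, Rosen and Shi to the simple random walk on $\mathcal{T}_n$. Fixing the root $\rho^n$ and writing $\tau_\ell^n := \inf\{t \ge 0 : L_t^n(\rho^n) > \ell\}$ for the inverse local time at $\rho^n$, this theorem furnishes a centred Gaussian field $(\eta_x^n)_{x\in V(\mathcal{T}_n)}$, independent of the walk, with $\eta_{\rho^n}^n = 0$ and $\mathrm{Cov}(\eta_y^n, \eta_z^n)$ equal to the Green function of the walk killed at $\rho^n$, such that
\[
\{L_{\tau_\ell^n}^n(x) + \tfrac{1}{2}(\eta_x^n)^2\}_{x\in V(\mathcal{T}_n)} \stackrel{d}{=} \{\tfrac{1}{2}(\eta_x^n + \sqrt{2\ell})^2\}_{x\in V(\mathcal{T}_n)}.
\]
Because $\mathcal{T}_n$ is a tree with unit edge weights, the resistance metric coincides with the graph metric, so $\mathrm{Var}(\eta_y^n - \eta_z^n) = d_{\mathcal{T}_n}(y,z)$ and $\mathrm{Var}(\eta_y^n + \eta_z^n) \le 4 r(\mathcal{T}_n)$. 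Taking differences in the isomorphism and comparing moment generating functions of the two sides, one extracts a Bernstein-type concentration estimate
\[
\mathbf{P}_{\rho^n}^{\mathcal{T}_n}\big(|L_{\tau_\ell^n}^n(y) - L_{\tau_\ell^n}^n(z)| \ge s\big) \le 2\exp\!\Big(-c\Big(\tfrac{s^2}{\ell\, d_{\mathcal{T}_n}(y,z)} \wedge \tfrac{s}{\sqrt{r(\mathcal{T}_n) d_{\mathcal{T}_n}(y,z)}}\Big)\Big),
\]
with a universal constant $c > 0$; the sub-Gaussian scale $\sqrt{\ell d_{\mathcal{T}_n}(y,z)}$ comes from the linear-in-$\eta$ term $\sqrt{2\ell}(\eta_y^n - \eta_z^n)$ dominant for large $\ell$, and the sub-exponential scale $\sqrt{r(\mathcal{T}_n) d_{\mathcal{T}_n}(y,z)}$ comes from the bilinear correction $\tfrac{1}{2}((\eta_y^n)^2 - (\eta_z^n)^2)$.

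Next, set $\ell_* := C r(\mathcal{T}_n) T$ for a universal constant $C$ chosen large enough that, by the commute-time identity $\mathbf{E}_{\rho^n}[\tau_1^n] = m(\mathcal{T}_n)$ together with a second application of the Bernstein bound above specialised to $y = \rho^n$, the event $\{\tau_{\ell_*}^n \ge r(\mathcal{T}_n) m(\mathcal{T}_n) T\}$ fails with probability at most $c_1 e^{-c_2 \lambda}$. On the complementary good event, monotonicity of $t \mapsto L_t^n(\cdot)$ gives, for $t \in [\tau_{\ell-1}^n, \tau_\ell^n)$,
\[
|L_t^n(y) - L_t^n(z)| \le |L_{\tau_{\ell-1}^n}^n(y) - L_{\tau_{\ell-1}^n}^n(z)| + (L_{\tau_\ell^n}^n(y) - L_{\tau_{\ell-1}^n}^n(y)) + (L_{\tau_\ell^n}^n(z) - L_{\tau_{\ell-1}^n}^n(z)),
\]
so that the continuous-time supremum reduces to $\sup_{\ell \le \ell_*}|L_{\tau_\ell^n}^n(y) - L_{\tau_\ell^n}^n(z)|$ plus two residual maxima over $\ell$ of i.i.d.\ single-excursion local times $L_{\tau_1^n}^n(\cdot)$. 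The residuals are controlled using the isomorphism at $\ell = 1$ combined with the universal bound $R_{\mathcal{T}_n}(\rho^n, x) \le r(\mathcal{T}_n)$ for all $x$; their maximum over $\ell_*$ i.i.d.\ copies contributes only a lower-order additive term relative to the target bound.

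For the principal supremum, the key observation is that $\ell \mapsto L_{\tau_\ell^n}^n(y) - L_{\tau_\ell^n}^n(z)$ is a mean-zero random walk on $\mathbb{R}$: its increments are i.i.d.\ (distinct excursions from $\rho^n$ are independent and equidistributed), and centred because the isomorphism forces $\mathbf{E}_{\rho^n}[L_{\tau_1^n}^n(x)] = 1$ for every vertex $x$. Applying Doob's $L^{2k}$ maximal inequality to this martingale and optimising over $k$ promotes the fixed-$\ell$ Bernstein bound to an identical Bernstein bound for $\sup_{0 \le \ell \le \ell_*}$. Substituting the target scale $s = r(\mathcal{T}_n) \lambda \sqrt{\tilde d_{\mathcal{T}_n}(y,z)}$ together with $\ell_* = C r(\mathcal{T}_n) T$ cancels every power of $r(\mathcal{T}_n)$ and of $d_{\mathcal{T}_n}(y,z)$ in the exponent, leaving $2 \exp(-c(\lambda^2/(CT) \wedge \lambda))$. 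The sub-exponential branch is exactly the claimed $c_1 e^{-c_2 \lambda}$ with $c_2 = c$ independent of $T$; the sub-Gaussian branch is stronger and can be absorbed into a $T$-dependent prefactor via the elementary inequality $e^{-a\lambda^2} \le e^{c_2^2/(4a)} e^{-c_2 \lambda}$, which gives the stated dependence $c_1 = c_1(T)$ while preserving universality of $c_2$.

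The main technical obstacle lies in the transfer from the discrete supremum over inverse local-time levels to the continuous-time supremum without introducing constants that grow with $n$. A naive union bound over $\ell \le \ell_* \asymp n r(\mathcal{T}_n) T$ would inject an unacceptable polynomial-in-$n$ factor, and the martingale structure of $\ell \mapsto L_{\tau_\ell^n}^n(y) - L_{\tau_\ell^n}^n(z)$---derived from the reversibility-driven centring $\mathbf{E}_{\rho^n}[L_{\tau_1^n}^n(x)] = 1$---is indispensable in converting this supremum into a Doob-type bound independent of $\ell_*$ except through the variance scale. The within-excursion correction is the other delicate point, since individual excursions can reach vertices at the full diameter; the sharpness of the $\ell=1$ isomorphism bound, together with the universal estimate $R_{\mathcal{T}_n}(\rho^n, x) \le r(\mathcal{T}_n)$, is what prevents these corrections from contaminating the final constants.
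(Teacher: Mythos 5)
The statement you are proving is not actually proved in this paper at all: it is quoted verbatim from Croydon \cite{croydon2015moduli}, so there is no internal proof to match, and your Ray--Knight/concentration strategy is at least in the spirit of the cited source. However, as written your argument has two genuine gaps. First, the truncation step is impossible as stated: you fix $\ell_* = C r(\mathcal{T}_n) T$ with $C$ a universal constant, independent of $\lambda$, and then claim that the event $\{\tau^n_{\ell_*} < r(\mathcal{T}_n) m(\mathcal{T}_n) T\}$ has probability at most $c_1 e^{-c_2\lambda}$ for every $\lambda \ge 0$. The event does not involve $\lambda$, and letting $\lambda \to \infty$ would force its probability to be zero, which it is not (the walk can oscillate between the root and a neighbour and accumulate root local time faster than $C r T$ with positive probability). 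To repair this you must let the truncation level grow with $\lambda$ (say $\ell_* \asymp r(T+\lambda)$), supply a genuine upper-tail estimate for $L^n_{rmT}(\rho^n)$ (or a lower-tail estimate for $\tau^n_{\ell_*}$) that is itself uniform over trees, and then recheck that the sub-Gaussian branch $\lambda^2 r/\ell_*$ can still be absorbed into a $T$-dependent prefactor with a universal $c_2$; none of this is automatic, and it is a nontrivial estimate of the same flavour as the theorem itself.

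Second, and more seriously, your treatment of the within-block residual destroys exactly the cancellation the theorem is about. Bounding $\sup_{t\in[\tau^n_{\ell-1},\tau^n_\ell)}|L^n_t(y)-L^n_t(z)|$ by the value at $\tau^n_{\ell-1}$ plus the two \emph{individual} increments $L^n_{\tau^n_\ell}(x)-L^n_{\tau^n_{\ell-1}}(x)$, $x\in\{y,z\}$, replaces a difference whose natural scale is $\sqrt{r\, d_{\mathcal{T}_n}(y,z)}$ by quantities with exponential tails at scale $R(\rho^n,x)$, which can be of order $r(\mathcal{T}_n)$. Their maximum over the $\asymp r(\mathcal{T}_n)T$ blocks is then of order $r(\mathcal{T}_n)\log\big(r(\mathcal{T}_n)T\big)$, whereas the target threshold $\lambda\sqrt{r(\mathcal{T}_n) d_{\mathcal{T}_n}(y,z)}$ can be as small as $\lambda\sqrt{r(\mathcal{T}_n)}$ (neighbouring vertices); so the residual is not a ``lower-order additive term'' --- it diverges relative to the threshold as $n\to\infty$, and a union bound over blocks produces a prefactor growing with $n$ rather than a bound of the form $c_1 e^{-c_2\lambda}$ uniform over $\mathcal{T}_n$. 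Any successful version of this argument must control the intra-block supremum of the \emph{difference} $L^n_t(y)-L^n_t(z)$ itself (or avoid the unit-local-time block decomposition altogether, e.g.\ by a chaining argument in the resistance metric applied to the difference field), and this is precisely where the real work in the cited proof lies. The isomorphism-plus-MGF derivation of the fixed-level Bernstein bound and the martingale/Doob upgrade to $\sup_{\ell\le\ell_*}$ are fine in principle (modulo the usual discrete- versus continuous-time normalisation of the local times), but these two steps, as written, would fail.
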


We remark here that the product $m(\mathcal{T}_n) r(\mathcal{T}_n)$, that is the product of the volume and the diameter of the graph, which is also the maximal commute time of the random walk, gives the natural time-scaling for the various models of sequences of critical random graphs we are going to consider. The last ingredient we are going to make considerable use of is the tightness of the sequence $||v_n||_{H_{\alpha}}$ of H\"older norms, for some $\alpha>0$. The proof of \eqref{proof2} is based on Kolmogorov's continuity criterion (and its proof to get uniformity in $n$). 

\begin{theorem} [\textbf{Janson and Marckert \cite{janson2005convergence}}] \label{prepar2}
There exists $\alpha\in (0,1/2)$ such that for every $\varepsilon>0$ there exists a finite real number $K_{\varepsilon}$ such that
\begin{equation} \label{proof2}
P\left(\sup_{s,t\in [0,1]} \frac{|v_n(s)-v_n(t)|}{|t-s|^{\alpha}}\le K_{\varepsilon}\right)\ge 1-\varepsilon,
\end{equation}
uniformly on $n$.
\end{theorem}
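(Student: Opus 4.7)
The plan is to apply Kolmogorov's continuity criterion, being careful to track the constants so that the Hölder modulus can be controlled uniformly in $n$. Concretely, I would aim to establish that there exist $p>2$ and a finite constant $C_p$, both independent of $n$, such that
\[
E\bigl|v_n(s)-v_n(t)\bigr|^p \;\le\; C_p\,|s-t|^{p/2} \qquad \text{for all } s,t\in[0,1].
\]
Given such a moment bound, the standard dyadic chaining underlying Kolmogorov's criterion produces a random variable $M_{n,\alpha}$ for each $\alpha\in(0,1/2-1/p)$ such that $|v_n(s)-v_n(t)|\le M_{n,\alpha}|t-s|^{\alpha}$, with $E[M_{n,\alpha}^{p}]$ bounded by a constant $\tilde C_{p,\alpha}$ depending only on $C_p$ and $\alpha$. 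The claim \eqref{proof2} then follows from Markov's inequality.

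To establish the uniform moment bound, the first step is to reformulate $v_n$ in terms of a conditioned random walk. The contour function $V_n$ of $\mathcal{T}_n$ can be expressed, up to an elementary rearrangement, in terms of the Harris walk, which itself is closely related to the Łukasiewicz walk associated with the offspring distribution $\xi$. Via this correspondence the Galton--Watson tree $\mathcal{T}_n$ is coded by a centered random walk $(S_k)$ with step distribution of mean zero and variance $\sigma_\xi^2$, conditioned to first hit $-1$ at step $n+1$. An increment $V_n(\lfloor 2ns\rfloor)-V_n(\lfloor 2nt\rfloor)$ is then dominated, in a suitable stochastic sense, by increments of this conditioned walk on a time window of length $\Theta(n|s-t|)$.

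With this representation in place, the second step is to remove the conditioning at controlled cost using the local central limit theorem. Aperiodicity and finite variance of $\xi$ give $P(\text{total progeny}=n+1)\asymp n^{-3/2}$ through the classical local limit theorem for bridges. Combining this with the standard unconditioned moment estimate $E|S_{\lfloor n|s-t|\rfloor}|^{p}\le c_p(n|s-t|)^{p/2}$ and rescaling by $\sqrt{n}$ produces the target bound $E|v_n(s)-v_n(t)|^p\le C_p|s-t|^{p/2}$, modulo separate treatment of boundary regimes where $|s-t|$ approaches $0$ or $1$ and the conditioning interacts non-trivially with the increment window.

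The principal difficulty lies in this last step: making the removal of the conditioning \emph{uniform in $n$} and \emph{uniform in the position $(s,t)\in[0,1]^2$} of the increment. When the window $[s,t]$ is near the endpoints of $[0,1]$, direct application of the local limit theorem gives bounds whose constants depend on $s,t$, and one must instead use a decomposition of the contour around its global minimum (or exploit the exchangeability obtained via the Vervaat transform between bridges and excursions) to reduce to the interior regime. Once this reduction is carried out, the constants in the local CLT and in the unconditioned moment estimate depend only on $\sigma_\xi^2$ and on moments of $\xi$, yielding a bound uniform in $n$ and completing the proof.
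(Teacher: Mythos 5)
This statement is not proved in the paper at all: it is quoted from Janson and Marckert \cite{janson2005convergence}, who establish it under the assumption that the offspring distribution has finite \emph{exponential} moments, and the remark following the theorem explains that in the paper's actual setting (only finite variance $\sigma_\xi^2$) the estimate is imported from \cite[Lemma 1]{marzouk2018snake} for the normalized height function and then transferred to the contour function via the standard comparison between the contour process and a time-changed height process. So the relevant benchmark for your proposal is the Janson--Marckert/Marzouk argument, and against that benchmark your outline has a genuine gap rather than being an alternative proof.

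The gap is twofold. First, your key estimate $E|v_n(s)-v_n(t)|^p\le C_p|s-t|^{p/2}$ with some $p>2$ and constants depending ``only on $\sigma_\xi^2$ and on moments of $\xi$'' is simply not available under the paper's standing hypothesis: already for the unconditioned Łukasiewicz walk, a Marcinkiewicz--Zygmund/Rosenthal bound $E|S_m|^p\le c_p m^{p/2}$ requires $E\xi^p<\infty$, so your argument silently strengthens the hypotheses (Janson--Marckert assume even more, namely exponential moments, precisely because such increment bounds are needed), and it cannot yield the finite-variance version that the paper actually uses; that version rests on Marzouk's different, more delicate height-function estimate. Second, even granting higher moments, the step ``$V_n(\lfloor 2ns\rfloor)-V_n(\lfloor 2nt\rfloor)$ is dominated, in a suitable stochastic sense, by increments of the conditioned walk on a window of length $\Theta(n|s-t|)$'' is asserted, not proved, and it is exactly where the difficulty sits: the height/contour process is a record-counting functional of the Łukasiewicz walk, not a random walk, and converting walk estimates into increment bounds for the height, uniformly over all windows and under the conditioning on total progeny, is the technical core of \cite{gitten2003state}, \cite{janson2005convergence} and \cite{marzouk2018snake}. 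Your final paragraph correctly identifies the de-conditioning near the endpoints (local limit theorem, Vervaat transform) as delicate, but the preceding domination step and the moment hypotheses are the points at which the proposed proof, as written, fails.
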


\begin{remark}
Building upon \cite{gitten2003state}, Janson and Marckert proved this precise estimate on the geometry of the trees when the offspring distribution has finite exponential moments. Relaxing this strong condition to only a finite variance assumption, the recent work of Marzouk and more specifically \cite[Lemma 1]{marzouk2018snake} implies that Theorem \ref{prepar2} holds for the normalized height function of $\mathcal{T}_n$, which constitutes an alternative encoding of the trees. That Theorem \ref{prepar2} can be stated as well in the terms of the normalized contour function of $\mathcal{T}_n$, with only a finite variance assumption, is briefly achieved using that the normalized contour process is arbitrarily close to a time-changed normalized height process. See the equation that appears as (15) in \cite[Theorem 1.7]{legall2005applications} and refer to \cite[Section 1.6]{legall2005applications} for a detailed discussion.
\end{remark}

Since $(\mathcal{T}_n)_{n\ge 1}$ is a collection of graph trees it follows that the shortest path distance $d_{\mathcal{T}_n}$, $n\ge 1$ is identical to the resistance metric on the vertex set $V(\mathcal{T}_n)$, $n\ge 1$. In this context, we make use of the full machinery provided by the theorems above in order to prove that the local times are equicontinuous with respect to the annealed law, which is formally defined for suitable events as 
\begin{equation} \label{exemplary}
\mathbb{P}_{\rho^n}(\cdot):=\int \mathbf{P}_{\rho^n}^{\mathcal{T}_n}(\cdot) P(d \mathcal{T}_n).
\end{equation}

\begin{proposition} \label{own}
For every $\varepsilon>0$ and $T>0$,
\[
\lim_{\delta\rightarrow 0} \limsup_{n\to \infty} \mathbb{P}_{\rho^n} \left(\sup_{\substack{y,z\in V(\mathcal{T}_n): \\ n^{-1/2} d_{\mathcal{T}_n}(y,z)<\delta}} \sup_{t\in [0,T]} n^{-1/2} |L_{n^{3/2} t}^n(y)-L_{n^{3/2} t}^n(z)|\ge \varepsilon\right)=0.
\]
\end{proposition}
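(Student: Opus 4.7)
The plan is to pass from annealed to quenched by restricting to a high-probability event on which $\mathcal{T}_n$ is geometrically well-behaved, and then to run a chaining argument built from Theorem \ref{prepar1} (pointwise exponential tail) and Theorem \ref{prepar2} (metric entropy via H\"older regularity of the contour).

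First, I would fix $\eta>0$ and choose constants $0<c<C<\infty$, $\alpha\in(0,1/2)$ and $K>0$ such that $P(A_n\cap B_n)\ge 1-2\eta$ uniformly in $n$, where $A_n:=\{c\sqrt{n}\le r(\mathcal{T}_n)\le C\sqrt{n}\}$ and $B_n:=\{\|v_n\|_{H_\alpha}\le K\}$; existence of $c,C$ follows from \eqref{con1} together with the a.s.\ finiteness and positivity of $\mathrm{diam}(\mathcal{T}_e)$, while $\alpha,K$ come from Theorem \ref{prepar2}. On $A_n$ the factors $n^{-1/2}$ and $r(\mathcal{T}_n)^{-1}$ are comparable up to constants in $[c,C]$, and the time window $[0,n^{3/2}T]$ is contained in $[0,r(\mathcal{T}_n)m(\mathcal{T}_n)T/c]$ for $n$ large, so up to absorbing constants into $\varepsilon$ and $\delta$ it suffices to control, quenched on $A_n\cap B_n$, the probability of
\[
\Bigl\{\sup_{\tilde d_n(y,z)<\delta}\sup_{t\le T/c}r(\mathcal{T}_n)^{-1}\bigl|L^n_{r(\mathcal{T}_n)m(\mathcal{T}_n)t}(y)-L^n_{r(\mathcal{T}_n)m(\mathcal{T}_n)t}(z)\bigr|\ge\varepsilon\Bigr\},
\]
with $\tilde d_n:=r(\mathcal{T}_n)^{-1}d_{\mathcal{T}_n}$, which is the natural setting for Theorem \ref{prepar1}.

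For the metric-entropy input, combining the tree distance identity $d_{\mathcal{T}_n}(f(i),f(j))=V_n(i)+V_n(j)-2\min_{[i\wedge j,i\vee j]}V_n$ with the H\"older bound on $B_n$ yields $\tilde d_n(f(i),f(j))\le C_1(|i-j|/n)^\alpha$ for a constant $C_1=C_1(c,K)$. Sampling contour times on an evenly spaced grid thus produces, at each dyadic scale $\delta_k:=\delta\cdot 2^{-k}$, a $\delta_k$-net $\mathcal{N}_k\subset V(\mathcal{T}_n)$ for $\tilde d_n$ of cardinality $N_k\le C_2\delta_k^{-1/\alpha}$ (capped at $|V(\mathcal{T}_n)|$), with constants uniform in $n$. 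Given $y,z$ with $\tilde d_n(y,z)<\delta$, picking approximants $y_k,z_k\in\mathcal{N}_k$ at $\tilde d_n$-distance $\le\delta_k$ (equal to $y,z$ for $k$ large in the discrete setting) and telescoping expresses $r(\mathcal{T}_n)^{-1}|L(y)-L(z)|$ as a sum of increments over pairs in $\mathcal{N}_{k-1}\times\mathcal{N}_k$ at distance $\le 2\delta_{k-1}$, plus a single base term on $\mathcal{N}_0$ at distance $\le 3\delta$. Theorem \ref{prepar1} with horizon $T/c$ bounds each such increment by $\lambda_k\sqrt{2\delta_{k-1}}$ at probabilistic cost $c_1 e^{-c_2\lambda_k}$; choosing $\lambda_k=\lambda(k+k_0)$ with $k_0=\Theta(\log(1/\delta))$ and $\lambda$ a sufficiently large constant depending on $\alpha$ and $c_2$, the union bound over the $O(\delta_k^{-2/\alpha})$ pairs per level produces a total exception probability that vanishes first as $n\to\infty$ and then as $\delta\to 0$, while the total length $\sum_k\lambda_k\sqrt{\delta_k}=O(\sqrt\delta\log(1/\delta))$ falls below $\varepsilon$ for $\delta$ small. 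Integrating against $P(\mathrm{d}\mathcal{T}_n)$ and adding $P((A_n\cap B_n)^c)\le 2\eta$ yields $\limsup_n\mathbb{P}_{\rho^n}(\cdots\ge\varepsilon)\le 2\eta$ after $\delta\to 0$, and sending $\eta\to 0$ closes the argument.

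The main difficulty I anticipate is handling the levels $k$ at which $N_k$ saturates at $|V(\mathcal{T}_n)|$: there the union bound incurs a factor of order $n^2$, which must be absorbed by the exponential tail of Theorem \ref{prepar1}. This forces $\lambda$ to be chosen large relative to $1/\alpha$; once this calibration is in place, the rest of the argument reduces to standard metric-entropy chaining on the good event $A_n\cap B_n$, with Theorem \ref{prepar2} providing precisely the $n$-uniform entropy control that drives the estimate.
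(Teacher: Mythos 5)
Your proposal is correct, but it reaches the conclusion by a genuinely different route than the paper, even though both rest on the same two inputs. Conditionally on the H\"older event for $v_n$ from Theorem \ref{prepar2}, the paper integrates the exponential tail of Theorem \ref{prepar1} to obtain, for any $p>0$, the increment moment bound \eqref{proof4}, rewrites it in terms of the contour-time parameters $t_1,t_2$ (using $r(\mathcal{T}_n)\le Cn^{1/2}2^{\alpha}$ and $m(\mathcal{T}_n)=2n$), and then invokes the moment criterion (13.14) of Billingsley's Theorem 13.5 to get tightness of the contour-indexed process $s\mapsto (n^{-1/2}L^n_{n^{3/2}t}(2ns))_{t\in[0,T]}$ in $C[0,1]$, before removing the conditioning. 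You instead keep the exponential tail and run an explicit dyadic chaining in the rescaled tree metric, with the H\"older bound supplying $n$-uniform covering numbers $N_k\le C_2\delta_k^{-1/\alpha}$. What each buys: the paper's argument is shorter because Billingsley packages the chaining, but it controls oscillations in the contour parameter, and the passage from that to the supremum over pairs that are merely close in the tree metric (which is what the proposition asserts) is left implicit --- two vertices at small tree distance, for instance on different branches just above a branch point, need not admit nearby contour times --- so your choice to chain directly in $\tilde d_n$ targets exactly the required quantity. Your event $A_n$, with the lower bound $r(\mathcal{T}_n)\ge c\sqrt n$ obtained from \eqref{con1} and the a.s.\ positivity of $\mathrm{diam}(\mathcal{T}_e)$, also makes explicit the comparison between the intrinsic scales $r(\mathcal{T}_n)^{-1}$ and $r(\mathcal{T}_n)m(\mathcal{T}_n)$ of Theorem \ref{prepar1} and the scales $n^{-1/2}$, $n^{3/2}$ in the statement, a conversion the paper performs using only the upper bound on $r(\mathcal{T}_n)$. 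Finally, the difficulty you flag about levels where $N_k$ saturates at $|V(\mathcal{T}_n)|$ is not really a difficulty: the capped cardinality never exceeds $C_2\delta_k^{-1/\alpha}$, so the union-bound cost per level is the same as elsewhere, and your calibration $c_2\lambda>2\alpha^{-1}\log 2$ together with $k_0$ of order $\log(1/\delta)$ already suffices.
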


\begin{proof}
Let us define, similarly to $d_{\mathcal{T}_e}$, the distance $d_{v_n}$ in $[0,1]$ by setting
\[
d_{v_n}(s,t):=v_n(s)+v_n(t)-2 \min_{r\in [s\wedge t,s\vee t]} v_n(r).
\]
Using the terminology introduced to describe the CRT, it is a fact that $\mathcal{T}_n$ coincides with the tree coded by the function $n^{1/2} v_n$, equipped with the metric $n^{1/2} d_{v_n}$ and the equivalence relation $s\sim t$ if and only if $v_n(s)=v_n(t)=\min_{r\in [s\wedge t,s\vee t]} v_n(r)$. We denote by $p_{v_n}: [0,1]\rightarrow \mathcal{T}_n$ the canonical projection that maps every time point in $[0,1]$ to its equivalence class on $\mathcal{T}_n$. From Theorem \ref{prepar2}, there exist $C>0$ and $\alpha\in (0,1/2)$ such that 
\begin{equation} \label{proof3}
d_{v_n}(s,t)\le C |t-s|^{\alpha}, \qquad \forall s,t\in [0,1]
\end{equation}
with probability arbitrarily close to 1. We condition on $v_n$, assuming that it satisfies \eqref{proof3}. Given $t_1,t_2\in [0,1]$, with $2 n t_1$ and $2 n t_2$ integers, such that $p_{v_n}(t_1)=y$ and $p_{v_n}(t_2)=z$, the total length of the path between $y$ and $z$ is, using \eqref{proof3}
\begin{align*}
V_n(2 n t_1)+V_n(2 n t_2)-2 \min_{r\in [t_1\wedge t_2,t_1\vee t_2]} V_n(2 n r)&=n^{1/2} (v_n(t_1)+v_n(t_2)-2 \min_{r\in [t_1\wedge t_2,t_1\vee t_2]} v_n(r))\\
&=n^{1/2} d_{v_n}(t_1,t_2)\le C n^{1/2} |t_1-t_2|^{\alpha}.
\end{align*}
By Theorem \ref{prepar1}, for any fixed $p>0$,
\begin{align*}
&\mathbf{E}_{\rho^n}^{\mathcal{T}_n} \left[\sup_{t\in [0,T]} r(\mathcal{T}_n)^{-1} \left|L_{r(\mathcal{T}_n) m(\mathcal{T}_n) t}^n(y)-L_{r(\mathcal{T}_n) m(\mathcal{T}_n) t}^n(z) \right|^p\right]\\
&=\int_{0}^{\infty} \mathbf{P}_{\rho^n}^{\mathcal{T}_n} \left(\sup_{t\in [0,T]}  r(\mathcal{T}_n)^{-1} \left|L_{r(\mathcal{T}_n) m(\mathcal{T}_n) t}^n(y)-L_{r(\mathcal{T}_n) m(\mathcal{T}_n) t}^n(z) \right|\ge \varepsilon^{1/p}\right) d \varepsilon\\
&\le c_1 \int_{0}^{\infty} e^{-c_2 \frac{\varepsilon^{1/p}}{\sqrt{r(\mathcal{T}_n)^{-1} d_{\mathcal{T}_n}(y,z)}}} d \varepsilon.
\end{align*}
Changing variables, $\lambda^{1/p}=\frac{\varepsilon^{1/p}}{\sqrt{r(\mathcal{T}_n)^{-1} d_{\mathcal{T}_n}(y,z)}}$ yields
\[
\int_{0}^{\infty} e^{-c_2 \frac{\varepsilon^{1/p}}{\sqrt{r(\mathcal{T}_n)^{-1} d_{\mathcal{T}_n}(y,z)}}} d \varepsilon=(r(\mathcal{T}_n)^{-1} d_{\mathcal{T}_n}(y,z))^{p/2} \cdot \int_{0}^{\infty}e^{-c_2 \lambda^{1/p}} d \lambda\le c_3 (r(\mathcal{T}_n)^{-1} d_{\mathcal{T}_n}(y,z))^{p/2},
\]
where $c_3$ is a constant depending only on $p$. Therefore, 
\begin{equation} \label{proof4}
\mathbf{E}_{\rho^n}^{\mathcal{T}_n} \left[\sup_{t\in [0,T]} r(\mathcal{T}_n)^{-1} \left|L_{r(\mathcal{T}_n) m(\mathcal{T}_n) t}^n(y)-L_{r(\mathcal{T}_n) m(\mathcal{T}_n) t}^n(z) \right|^p\right]\le c_3 (r(\mathcal{T}_n)^{-1} d_{\mathcal{T}_n}(y,z))^{p/2}.
\end{equation}
Conditioned on the event that $v_n$ satisfies \eqref{proof3}, observe that the total length of the path between $y$ and $z$ is bounded above by $C n^{1/2} |t_1-t_2|^{\alpha}$, and consequently the diameter of $\mathcal{T}_n$ in the resistance metric is bounded above by a multiple of $n^{1/2}$. More specifically, $r(\mathcal{T}_n)\le C n^{1/2} 2^{\alpha}$. Moreover, $m(\mathcal{T}_n)=2n$. Hence, by \eqref{proof4}, we have shown that, conditioned on $v_n$ satisfying \eqref{proof3}, for any fixed $p>0$,

\begin{align*}
\mathbf{E}_{\rho^n}^{\mathcal{T}_n} \left[\sup_{t\in [0,T]} n^{-1/2} \left|L_{n^{3/2} t}^n(y)-L_{n^{3/2} t}^n(z) \right|^p\right]&\le c_4  (n^{-1/2} d_{\mathcal{T}_n}(y,z))^{p/2} 2^{-\alpha p}\\
&\le c_5 \left|(t_1/4)-(t_2/4)\right|^{\alpha p/2}.
\end{align*}
This holds for all $t_1$ and $t_2$, with $2 n t_1$ and $2 n t_2$ integers, such that $p_{v_n}(t_1)=y$ and $p_{v_n}(t_2)=z$. Choosing $p$ such that $\alpha p>2$, and using the moment condition (13.14) of \cite[Theorem 13.5]{billingsley2013convergence} yields that, under the event that $v_n$ satisfies \eqref{proof3}, the sequence of the rescaled local times $\{(n^{-1/2} L^n_{n^{3/2} t}(2 n \cdot))_{t\in [0,T]}\}_{n\ge 1}$ is tight in $C[0,1]$. Therefore, for every $\varepsilon>0$ and $T>0$,
\[
\lim_{\delta\rightarrow 0} \limsup_{n\to \infty} \mathbb{P}_{\rho^n} \left(\sup_{\substack{y,z\in V(\mathcal{T}_n): \\ n^{-1/2} d_{\mathcal{T}_n}(y,z)<\delta}} \sup_{t\in [0,T]} n^{-1/2} |L_{n^{3/2} t}^n(y)-L_{n^{3/2} t}^n(z)|\ge \varepsilon\Bigg |\sup_{s,t\in [0,1]} \frac{d_{v_n}(s,t)}{|t-s|^{\alpha}}\le C \right)=0.
\]
To complete the proof, note that 
\[
\mathbb{P}_{\rho^n}(A)\le \mathbb{P}_{\rho^n}\left(A\Bigg|\sup_{s,t\in [0,1]} \frac{d_{v_n}(s,t)}{|t-s|^{\alpha}}\le C\right)+P(d_{v_n}(s,t)>C |t-s|^{\alpha}, \ \forall s,t\in [0,1]),
\]
measurable $A\subseteq \mathbb{K}$. The desired result now follows using the tightness of the rescaled local times, conditioned on $v_n$ satisfying \eqref{proof3}, which was shown before, and using the fact that the second probability on the right-hand side above, by \eqref{proof3}, is arbitrarily small.

\end{proof}

\subsubsection{It\^o's excursion theory of Brownian motion} \label{needed}

We recall some key facts of It\^{o}'s excursion theory of reflected Brownian motion. Our main interest here lies on the scaling property of the It\^{o} excursion measure. Let $(L_t^0)_{t\ge 0}$ denote the local time process at level 0 of the reflected Brownian motion $(|B_t|)_{t\ge 0}$. It can be shown that 
\[
L_t^0=\lim_{\varepsilon\rightarrow 0}\frac{1}{2 \varepsilon} \int_{0}^{t} \one_{[0,\varepsilon]}(|B_s|) ds,
\]
for every $t\ge 0$, a.s. The local time process at level 0 is increasing, and its set of points of increase coincides with the set of time points for which the reflected Brownian is identical to zero. Now, introducing the right-continuous inverse of the local time process at level 0, i.e.
\[
\tau_k:=\inf\{t\ge 0: L_t^0>k\},
\]
for every $k\ge 0$, we have that the set of points of increase of $(L_t^0)_{t\ge 0}$ alternatively belong to the set 
\[
\{\tau_k:k\ge 0\}\cup \{\tau_{k-}: k\in D\},
\]
where $D$ is the set of countable discontinuities of the mapping $k\mapsto \tau_k$. Then, for every $k\in D$ we define the excursion $(e_k(t))_{t\ge 0}$ with excursion interval $(\tau_{k-},\tau_k)$ away from 0 as 
\[
e_k(t):=
\begin{cases} 
   |B_{t+\tau_{k-}}|  & \text{if } 0\le t\le \tau_k-\tau_{k-}, \\
   0 & \text{if } t>\tau_k-\tau_{k-}.
\end{cases}
\]
Let $E$ denote the space of excursions, namely the space of functions $e\in C(\mathbb{R}_+,\mathbb{R}_+)$, satisfying $e(0)=0$ and $\zeta(e):=\sup\{s>0: e(s)>0\}\in (0,\infty)$. By convention $\sup \emptyset=0$. Observe here that for every $k\in D$, $e_k\in E$, and moreover $\zeta(e_k)=\tau_k-\tau_{k-}$. The main theorem of It\^o's excursion theory adapted in our setting is the existence of a $\sigma$-finite measure $\mathbb{N} (d e)$ on the space of positive excursions of linear Brownian motion, such that the point measure 
\[
\sum_{k\in D}\delta_{(k,e_k)}(ds \ de)
\]
is a Poisson measure on $\mathbb{R}_{+}\times E$, with intensity $ds \otimes \mathbb{N}(d e)$. The It\^o excursion measure has the following scaling property. For every $a>0$ consider the mapping $\Theta_a: E\rightarrow E$ defined by setting $\Theta_a(e)(t):=\sqrt{a}e(t/a)$, for every $e\in E$, and $t\ge 0$. Then, we have that 
\begin{equation} \label{sc1}
\mathbb{N}\circ \Theta^{-1}_a=\sqrt{a} \mathbb{N}
\end{equation}
Versions of the It\^o excursion measure $\mathbb{N}(d e)$ under different conditionings are possible. For example one can define conditionings with respect to the height or the length of the excursion. For our purposes we focus on the fact that there exists a unique collection of probability measures $(\mathbb{N}_s: s>0)$ on $E$, such that $\mathbb{N}_s(\zeta=s)=1$, for every $s>0$, and moreover for every measurable event $A\in E$,
\begin{equation} \label{sc2}
\mathbb{N}(A)=\int_{0}^{\infty} \mathbb{N}_s(A) \frac{ds}{2 \sqrt{2 \pi s^3}}.
\end{equation}
In other words $\mathbb{N}_s(d e)$ is the It\^o excursion measure $\mathbb{N}(d e)$, conditioned on the event $\{\zeta=s\}$. We might write $\mathbb{N}_1=\mathbb{N}(\cdot |\zeta=1)$ to denote law of the normalized Brownian excursion. It is straightforward from \eqref{sc1} and \eqref{sc2} to check that $\mathbb{N}_s$ satisfies the scaling property
\begin{equation} \label{sc3}
\mathbb{N}_s\circ \Theta_a^{-1}=\mathbb{N}_{as},
\end{equation}
for every $s>0$ and $a>0$. To conclude our recap on It\^o's excursion theory we highlight the fact that for every $t>0$ the process $(e(t+r))_{r\ge 0}$ is Markov under the conditional probability measure $\mathbb{N}(\cdot |\zeta>t)$. The transition kernel of the process is the same with the one of a Brownian motion killed upon the first time it hits zero. 

\subsubsection{Continuity of blanket times of Brownian motion on the CRT} \label{unes}

We are primarily interested in proving continuity of the $\varepsilon$-blanket time variable of the Brownian motion on the CRT. For every $\varepsilon \in (0,1)$ we let 

\[
\mathcal{A}_{\varepsilon}:=\left\{(\mathcal{T}_e)_{e\in E}: \mathbf{P}_{\rho}^{e}\left(\tau_{\text{bl}}^e(\varepsilon-)=\tau_{\text{bl}}^e(\varepsilon)\right)=1\right\}
\]
denote the the collection of random trees coded by positive excursions that have continuous $\varepsilon$-blanket times $\mathbf{P}_{\rho}^{e}$-a.s. Recall that the mapping $\varepsilon\mapsto \tau_{\text{bl}}^e(\varepsilon)$ is increasing in $(0,1)$, and therefore it posseses left and right limits, so $\mathcal{A}_{\varepsilon}$ is well-defined for every $\varepsilon\in (0,1)$. Moreover, $\varepsilon\mapsto \tau_{\text{bl}}^e(\varepsilon)$ has at most a countably infinite number of discontinuities $\mathbf{P}_{\rho}^e$-a.s. Using Fubini, we immediately get
\[
\int_{0}^{1} \int \mathbf{P}_{\rho}^{e}\left(\tau_{\text{bl}}^e(\varepsilon-)\neq \tau_{\text{bl}}^e(\varepsilon)\right) \mathbb{N}(d e) d \varepsilon=\mathbf{E}_{\mathbb{P}} \left[\int_{0}^{1} \one_{\left\{\tau_{\text{bl}}^e(\varepsilon-)\neq \tau_{\text{bl}}^e(\varepsilon)\right\}} d \varepsilon \right]=0,
\]
where by $\mathbf{E}_{\mathbb{P}_{\rho}}$, we denote the expectation with respect to the measure
\begin{equation} \label{sc5}
\mathbb{P}_{\rho}(\cdot):=\int \mathbf{P}_{\rho}^{e}(\cdot) \mathbb{N}(d e).
\end{equation}
Therefore, if we denote the Lebesgue measure by $\lambda$, we deduce that for $\lambda$-a.e. $\varepsilon$
\[
\int \left(1-\mathbf{P}_{\rho}^{e}\left(\tau_{\text{bl}}^e(\varepsilon-)=\tau_{\text{bl}}^e(\varepsilon)\right)\right) \mathbb{N}(d e)=0.
\]
The fact that $\mathbb{N}(d e)$ is a sigma-finite measure on $E$ yields that for $\lambda$-a.e. $\varepsilon$, $\mathbb{N}$-a.e. $e$
\begin{equation} \label{sc6}
\mathbf{P}_{\rho}^{e}\left(\tau_{\text{bl}}^e(\varepsilon-)=\tau_{\text{bl}}^e(\varepsilon)\right)=1.
\end{equation}
Thus, we inferred that for $\lambda$-a.e. $\varepsilon$, $\mathbb{N}$-a.e. $e$, $\mathcal{T}_e\in \mathcal{A}_{\varepsilon}$. 

To be satisfactory for our purposes, we need to improve this statement to hold for every $\varepsilon\in (0,1)$. Let $e\in E$ and consider the random real tree $(\mathcal{T}_e,d_{\mathcal{T}_e},\pi^e)$, where $d_{\mathcal{T}_e}$ is defined as in \eqref{dist} and $\pi^e$ is the image measure on $\mathcal{T}_e$ of the Lebesgue measure on $[0,\zeta]$ by the canonical projection $p_e$ of $[0,\zeta]$ onto $\mathcal{T}_e$. First, we observe that by applying the mapping $\Theta_a$ to $e$ for some $a>0$, results in perturbing $d_{\mathcal{T}_e}$ by a factor of $\sqrt{a}$ and $\pi^e$ by a factor of $a$. Recalling that $\Theta_a(e)(t)=\sqrt{a} e(t/a)$, for every $t\ge 0$, it is immediate that $d_{\mathcal{T}_{\Theta_a(e)}}(s,t)=\sqrt{a} d_{\mathcal{T}_e}(s,t)$, for every $s,t\in [0,\zeta]$. To see how $\pi^e$ rescales consider the set 
\[
I=\{t\in [0,\zeta]: p_e(t)\in A\},
\]
where $A$ is a Borel set of $\mathcal{T}_e$. Then, by definition $\pi^e(A)=\lambda(I)$. Moreover, $\pi^{\Theta_a(e)}(A)=\lambda(I')$, where 
\[
I=\{t\in [0,a \zeta]: p_e(t)\in A\}.
\]
By the scaling property of the Lebesgue measure, we have that $\pi^{\Theta_a(e)}(A)=a \lambda(I)$, and therefore $\pi^{\Theta_a(e)}(A)=a \pi^e(A)$. For simplicity, for the random real tree $\mathcal{T}= (\mathcal{T}_e,d_{\mathcal{T}_e},\pi^e)$, we write $\Theta_a \mathcal{T}$ to denote the resulting random real tree $(\mathcal{T}_e,\sqrt{a} d_{\mathcal{T}_e},a \pi^e)$ after rescaling. 

Next, if the Brownian motion on $\mathcal{T}$ admits local times $(L_t(x))_{x\in \mathcal{T}_e,t\ge 0}$ that are jointly continuous $\mathbf{P}_{\rho}^e$-a.s., then it is the case that the Brownian motion on $\Theta_a \mathcal{T}$ admits local times distributed as $(\sqrt{a} L_{a^{-3/2} t}(x))_{x\in \mathcal{T}_e,t\ge 0}$ that are jointly continuous $\mathbf{P}_{\rho}^e$-a.s. To justify this check that, for every $t>0$
\[
\int \sqrt{a} L_{a^{-3/2 } t}(x) \pi^{\Theta_a(e)}(d x)=a^{3/2} \int L_{a^{-3/2} t}(x) \pi^ e(d x)=t.
\]
Now, for every $\varepsilon\in (0,1)$ fraction of time and every scalar parameter $a>1$, for the $\varepsilon$-blanket time variable of the Brownian motion on $\Theta_a \mathcal{T}$ as defined in \eqref{blacken1}, we have that 
\begin{align*}
\tau^{\Theta_a(e)}_{\text{bl}}(a^{-1} \varepsilon) &\,{\buildrel (d) \over =}\, \inf \{t\ge 0: \sqrt{a} L_{a^{-3/2} t}(x)\ge \varepsilon a^{-1} t,\ \forall x\in \mathcal{T}_e\} \\
&\,{\buildrel (d) \over =}\, \inf \{t\ge 0: L_{a^{-3/2} t}(x)\ge \varepsilon a^{-3/2} t,\ \forall x\in \mathcal{T}_e\}\ \,{\buildrel (d) \over =}\, a^{-3/2} \tau_{\text{bl}}^e(\varepsilon).
\end{align*}
This fact asserts that $\mathcal{T}\in \mathcal{A}_{\varepsilon}$ if and only if $\Theta_a \mathcal{T}\in \mathcal{A}_{a^{-1} \varepsilon}$. In other words, $\tau_{\text{bl}}^e(\varepsilon)$ is continuous at $\varepsilon$,  $\mathbf{P}_{\rho}^{e}$-a.s. if and only if $\tau_{\text{bl}}^{\Theta_a(e)}(a^{-1} \varepsilon)$ is continuous at $\varepsilon$, $\mathbf{P}_{\rho}^e$-a.s. Using the precise way in which the blanket times above relate as well as the scaling properties of the usual and the normalized It\^o excursion we prove the following proposition.

\begin{proposition} \label{pr2}
For every $\varepsilon\in (0,1)$, $\mathbb{N}$-a.e. $e$, $\tau_{\textnormal{bl}}^e(\varepsilon)$ is continuous at $\varepsilon$, $\mathbf{P}_{\rho}^e$-a.s. Moreover, $\mathbb{N}_1$-a.e. $e$, $\tau_{\textnormal{bl}}^e(\varepsilon)$ is continuous at $\varepsilon$, $\mathbf{P}_{\rho}^e$-a.s.
\end{proposition}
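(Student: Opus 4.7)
The plan is to upgrade the Lebesgue-a.e. statement \eqref{sc6}, valid under the $\sigma$-finite measure $\mathbb{N}$, first to every $\varepsilon\in(0,1)$ under $\mathbb{N}$ and then to every $\varepsilon\in(0,1)$ under $\mathbb{N}_1$, by exploiting the scaling of the It\^{o} excursion measure together with the set-level identity $\Theta_a^{-1}(B_\varepsilon)=B_{a\varepsilon}$, where $B_\varepsilon:=\mathcal{A}_\varepsilon^{\,c}$; this identity is a direct consequence of the equivalence $\mathcal T_e\in\mathcal A_\varepsilon\Leftrightarrow\mathcal T_{\Theta_a(e)}\in\mathcal A_{a^{-1}\varepsilon}$ established just before the statement. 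For the $\mathbb{N}$-assertion, let $G\subset(0,1)$ be the Lebesgue-full set on which \eqref{sc6} gives $\mathbb N(B_\varepsilon)=0$. Fix any $\varepsilon_0\in(0,1)$, pick any $\varepsilon_1\in G$ and set $a=\varepsilon_0/\varepsilon_1$; applying \eqref{sc1} one obtains
\[
\mathbb N(B_{\varepsilon_0})=\mathbb N(\Theta_a^{-1}(B_{\varepsilon_1}))=\sqrt{a}\,\mathbb N(B_{\varepsilon_1})=0,
\]
which, as $\varepsilon_0\in(0,1)$ was arbitrary, is the first part of the proposition.

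For the $\mathbb N_1$-assertion, I would disintegrate $\mathbb N=\int_0^\infty\mathbb N_s\,\nu(ds)$ against $\nu(ds)=ds/(2\sqrt{2\pi s^3})$ supplied by \eqref{sc2}, and first note that under $\mathbb N_s$ the tree $\mathcal T_e$ has total mass $\pi^e(\mathcal T_e)=s$, so the occupation density formula forces $\tau_{\textnormal{bl}}^e(\varepsilon)=\infty$ $\mathbf P_\rho^e$-a.s.\ for every $\varepsilon>1/s$; in particular $\mathbb N_s(B_{\varepsilon_0})=0$ automatically when $s>1/\varepsilon_0$. Combining this with the nullity $\mathbb N(B_{\varepsilon_0})=0$ from the first part gives
\[
0=\int_0^{1/\varepsilon_0}\mathbb N_s(B_{\varepsilon_0})\,\nu(ds),
\]
so that $\mathbb N_s(B_{\varepsilon_0})=0$ for Lebesgue-a.e. $s\in(0,1/\varepsilon_0)$. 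The scaling \eqref{sc3}, reformulated as $\mathbb N_1(B_{\varepsilon_0})=\mathbb N_s(B_{\varepsilon_0/s})$ valid for every $s>\varepsilon_0$, exhibits $\mathbb N_1(B_{\varepsilon_0})$ as a constant value along the hyperbola $\{(s,\varepsilon_0/s):s>\varepsilon_0\}$; selecting any $s$ in the good full-measure set of the previous display therefore forces this common constant to be zero, and hence $\mathbb N_1(B_{\varepsilon_0})=0$.

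The main subtlety is precisely the final selection of $s$: the ``good set'' coming out of the disintegration naturally depends on the threshold $\delta$, and in the expression $\mathbb N_s(B_{\varepsilon_0/s})$ that threshold is itself a function of $s$. Making the self-referential choice rigorous requires a joint-measurability argument on the planar exceptional set $\{(s,\delta)\in(0,\infty)\times(0,1):\mathbb N_s(B_\delta)>0\}$, which is Lebesgue-null by the first part applied to every $\delta$, together with the observation that this set is a union of hyperbolae $s\delta=\text{const}$ on each of which $\mathbb N_s(B_\delta)=\mathbb N_1(B_{s\delta})$ is constant; taken in this form the null set is necessarily empty on the constancy curve $s\delta=\varepsilon_0$ unless its 1-parameter section has full measure, which contradicts the a.e.\ nullity. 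Once this Fubini-type step is discharged, everything else reduces to routine manipulation of the two scaling identities.
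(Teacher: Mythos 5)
Your first part is fine and is essentially the paper's own argument: you pick $\varepsilon_1$ in the Lebesgue-full set furnished by \eqref{sc6}, set $a=\varepsilon_0/\varepsilon_1$, and combine the set identity $\Theta_a^{-1}(B_{\varepsilon_1})=B_{\varepsilon_0}$ with the scaling relation \eqref{sc1}. No complaint there.

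The second part, however, has a genuine gap, and it sits exactly at the ``subtlety'' you flag; the proposed Fubini repair does not close it. Write $g(u):=\mathbb{N}_1(B_u)$. The only scaling input available to you is $\mathbb{N}_s(B_\delta)=\mathbb{N}_1(B_{s\delta})=g(s\delta)$, and by the disintegration \eqref{sc2} one has $\mathbb{N}(B_\delta)=\int_0^\infty g(s\delta)\,\nu(ds)$ with $\nu(ds)=ds/(2\sqrt{2\pi s^3})$ equivalent to Lebesgue measure on $(0,\infty)$. Consequently the statement ``$\mathbb{N}(B_\delta)=0$ for every $\delta\in(0,1)$'' is \emph{exactly} equivalent to ``$g=0$ Lebesgue-a.e.'' and carries no information about $g$ at the particular point $\varepsilon_0$. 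Your planar set $\{(s,\delta):\mathbb{N}_s(B_\delta)>0\}$ equals $\{(s,\delta):s\delta\in N\}$ with $N=\{u:g(u)>0\}$; it is two-dimensionally null as soon as $N$ is one-dimensionally null, and it may nonetheless contain the whole hyperbola $s\delta=\varepsilon_0$, since that curve meets each vertical or horizontal section in a single point — so there is no contradiction with the a.e.\ nullity of sections, and the claimed forcing of $\mathbb{N}_1(B_{\varepsilon_0})=0$ fails. The obstruction is structural: $\mathbb{N}_1\circ\Theta_a^{-1}=\mathbb{N}_a$ is mutually singular with $\mathbb{N}_1$ for $a\neq1$, so scaling does not act on $\mathbb{N}_1$-null sets the way it acts on $\mathbb{N}$-null sets, and the fixed-$\varepsilon$ statement under $\mathbb{N}_1$ cannot be extracted from the $\mathbb{N}$-statement plus these identities alone. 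This is precisely why the paper does not argue as you do: for the second assertion it re-runs the Fubini argument under the probability measure $\mathbb{N}(\cdot\,|\,\zeta\in[1,2])$ applied to the length-adapted events $\{\mathcal{T}\in\mathcal{A}_{\zeta^{-1}\varepsilon}\}$, so that after rescaling each excursion to unit length via \eqref{sc3} the a.e.-$\varepsilon$ statement is obtained directly under $\mathbb{N}_1$, and only then performs a scaling step inside the normalized family. To fix your proof you would need to incorporate an ingredient of this kind — an argument formulated at the level of the unit-length law itself — rather than trying to deduce the $\mathbb{N}_1$ conclusion from the $\mathbb{N}$ conclusion by measure-theoretic manipulation of the family $\{\mathbb{N}_s\}$.
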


\begin{proof}
Fix $\varepsilon\in (0,1)$. We choose $a>1$ in such a way that $a^{-1} \varepsilon\in \Omega_0$, where $\Omega_0$ is the set for which the assertion in \eqref{sc6} holds $\lambda$-a.e. $\varepsilon$. Namely, $\mathbb{N}$-a.e. $e$, $\mathcal{T}\in \mathcal{A}_{a^{-1} \varepsilon}$. Using the scaling property of It\^o's excursion measure as quoted in \eqref{sc1} yields $\sqrt{a} \mathbb{N}$-a.e. $e$, $\Theta_a \mathcal{T}\in \mathcal{A}_{a^{-1} \varepsilon}$, and consequently $\mathbb{N}$-a.e. $e$, $\mathcal{T}\in \mathcal{A}_{\varepsilon}$, where we exploited the fact that $\Theta_a \mathcal{T}\in \mathcal{A}_{a^{-1} \varepsilon}$ if and only if $\mathcal{T}\in \mathcal{A}_{\varepsilon}$. Since $\varepsilon\in (0,1)$ was arbitrary, this establishes the first conclusion that $\mathbb{N}$-a.e. $e$, $\tau_{\text{bl}}^e(\varepsilon)$ is continuous at $\varepsilon$, $\mathbf{P}_{\rho}^e$-a.s.

What remains now is to prove a similar result but with $\mathbb{N}(d e)$ replaced with its version conditioned on the length of the excursion. Following the same steps we used in order to prove \eqref{sc6}, we infer that for $\lambda$-a.e. $\varepsilon$, $\mathbb{N}(\cdot |\zeta\in [1,2])$-a.e. $e$, $\mathcal{T}\in \mathcal{A}_{\zeta^{-1} \varepsilon}$, and consequently $\lambda$-a.e. $\varepsilon$, $\mathbb{N}(\cdot |\zeta\in [1,2])$-a.e. $e$, $\Theta_{\zeta} \mathcal{T}\in \mathcal{A}_{\varepsilon}$. Using the scaling property of the normalised It\^o excursion measure (see \eqref{sc3}), we deduce that $\lambda$-a.e. $\varepsilon$, $\mathbb{N}_1$-a.e. $e$, $\mathcal{T}\in \mathcal{A}_{\varepsilon}$, where $\mathbb{N}_1$ is the law of the normalized Brownian excursion. To conclude, we proceed using the same argument as in the first part of the proof.

Fix an $\varepsilon\in (0,1)$ and choose $a>1$ such that $a^{-1} \varepsilon\in \Phi_0$, where $\Phi_0$ is the set for which the assertion $\mathbb{N}_1$-a.e. $e$, $\mathcal{T}\in \mathcal{A}_{\varepsilon}$ holds $\lambda$-a.e $\varepsilon$. Namely, $\mathbb{N}_1$-a.e. $e$, $\mathcal{T}\in \mathcal{A}_{a^{-1} \varepsilon}$, which from the scaling property of the normalised It\^o excursion measure yields $a \mathbb{N}_1$-a.e. $e$, $\Theta_a \mathcal{T}\in \mathcal{A}_{a^{-1} \varepsilon}$. As before this gives us that $\mathbb{N}_1$-a.e. $e$, $\mathcal{T}\in \mathcal{A}_{\varepsilon}$, or in other words that $\mathbb{N}_1$-a.e. $e$, $\tau_{\text{bl}}^e(\varepsilon)$ is continuous at $\varepsilon$, $\mathbf{P}_{\rho}^e$-a.s.

\end{proof}

Since the space in which the convergence in \eqref{con2} takes place is separable we can use Skorohod's coupling to deduce that there exists a common metric space $(F,d_F)$ and a joint probability measure $\tilde{\mathbf{P}}$ such that, as $n\to \infty$,
\[
d_H^F(V(\tilde{\mathcal{T}}_n),\tilde{\mathcal{T}}_e)\to 0, \qquad d_P^F(\tilde{\pi}^n,\tilde{\pi}^e)\to 0, \qquad d_F(\tilde{\rho}^n,\tilde{\rho})\to 0, \qquad \tilde{\mathbf{P}}\text{ -a.s.},
\]
where $(V(\mathcal{T}_n),\pi^n,\rho^n)\,{\buildrel (d) \over =}\, (V(\tilde{\mathcal{T}_n}),\tilde{\pi}^n,\tilde{\rho}^n)$ and $(\mathcal{T}_e,\pi^e,\rho)\,{\buildrel (d) \over =}\, (\tilde{\mathcal{T}_e},\tilde{\pi}^e,\tilde{\rho})$. Moreover, $X^n$ under $\mathbf{P}_{\tilde{\rho}^n}^{\tilde{\mathcal{T}_n}}$ converges weakly to the law of $X^e$ under $\mathbf{P}_{\tilde{\rho}}^{\tilde{e}}$ on $D([0,1],F)$. 

In Proposition \ref{own} we proved equicontinuity of the local times with respect to the annealed law. Then, reexamining the proof of Lemma \ref{emblem}, one can see that in this case $L^n$ under $\mathbb{P}_{\tilde{\rho}^n}(\cdot):= \int \mathbf{P}_{\tilde{\rho}^n}^{\tilde{\mathcal{T}_n}}(\cdot ) d \mathbf{\tilde{P}}$ will converge weakly to $L$ under $\mathbb{P}_{\tilde{\rho}}(\cdot):=\int \mathbf{P}_{\tilde{\rho}}^{\tilde{e}}(\cdot ) d \mathbf{\tilde{P}}$ in the sense of the local convergence as stated in \eqref{emb4}. It was this precise statement that was used extensively in the derivation of asymptotic distributional bounds for the rescaled blanket times in Section \ref{Sec2.2}. Then, the statement of Theorem \ref{Mth} translates as follows. For every $\varepsilon\in (0,1)$, $\delta\in (0,1)$ and $t\in [0,1]$,
\begin{equation*} 
\limsup_{n\rightarrow \infty} \int \mathbf{P}_{\tilde{\rho}^n}^{\tilde{\mathcal{T}_n}} \left(n^{-3/2} \tau_{\textnormal{bl}}^n(\varepsilon)\le t\right) d \mathbf{\tilde{P}}\le \int \mathbf{P}_{\tilde{\rho}}^{\tilde{e}} \left(\tau_{\textnormal{bl}}^e(\varepsilon(1-\delta))\le t\right) d \mathbf{\tilde{P}},
\end{equation*} 
\begin{equation*} 
\liminf_{n\rightarrow \infty} \int \mathbf{P}_{\tilde{\rho}^n}^{\tilde{\mathcal{T}_n}} \left(n^{-3/2} \tau_{\textnormal{bl}}^n(\varepsilon)\le t\right) d \mathbf{\tilde{P}}\ge \int \mathbf{P}_{\tilde{\rho}}^{\tilde{e}} \left({\tau}_{\textnormal{bl}}^e(\varepsilon)<t\right) d \mathbf{\tilde{P}}.
\end{equation*}
From Proposition \ref{pr2} and the dominated convergence theorem we have that for every $\varepsilon\in (0,1)$ and $t\in [0,1]$,
\begin{align*}
\lim_{\delta\to 0} \int \mathbf{P}_{\tilde{\rho}}^{\tilde{e}} \left(\tau_{\textnormal{bl}}^e(\varepsilon(1-\delta))\le t\right) d \mathbf{\tilde{P}}&=\lim_{\delta\to 0} \int \mathbf{P}_{\tilde{\rho}}^{\tilde{e}} \left({\tau}_{\textnormal{bl}}^e(\varepsilon)<t\right) d \mathbf{\tilde{P}}=\int \mathbf{P}_{\rho}^e\left(\tau_{\text{bl}}^e(\varepsilon)\le t\right) \mathbb{N}(d e)
\\
&=\mathbb{P}_{\rho}\left(\tau_{\text{bl}}^e(\varepsilon)\le t\right).
\end{align*}
Therefore, we deduce that for every $\varepsilon\in (0,1)$ and $t\in [0,1]$,
\begin{align*}
\lim_{n\to \infty} \mathbb{P}_{\rho^n} \left(n^{-3/2} \tau_{\textnormal{bl}}^n(\varepsilon)\le t\right)&=\lim_{n\to \infty} \int \mathbf{P}_{\rho^n}^{\mathcal{T}_n} \left(n^{-3/2} \tau_{\textnormal{bl}}^n(\varepsilon)\le t\right) P(d \mathcal{T}_n)=\mathbb{P}_{\rho}\left(\tau_{\text{bl}}^e(\varepsilon)\le t\right).
\end{align*}
In the theorem below we state the $\varepsilon$-blanket time variable convergence result we have just proved.

\begin{theorem}
Fix $\varepsilon\in (0,1)$. If $\tau_{\textnormal{bl}}^n(\varepsilon)$ is the $\varepsilon$-blanket time variable of the random walk on $\mathcal{T}_n$, started from its root $\rho^n$, then 
\[
\mathbb{P}_{\rho^n}\left(n^{-3/2} \tau_{\textnormal{bl}}^n(\varepsilon)\le t\right)\to \mathbb{P}_{\rho}\left(\tau_{\textnormal{bl}}^e(\varepsilon)\le t\right),
\]
for every $t\ge 0$, where $\tau_{\textnormal{bl}}^e(\varepsilon)\in (0,\infty)$ is the $\varepsilon$-blanket time variable of the Brownian motion on $\mathcal{T}_e$, started from $\rho$. Equivalently, for every $\varepsilon\in (0,1)$, $n^{-3/2} \tau_{\textnormal{bl}}^n(\varepsilon)$ under $\mathbb{P}_{\rho^n}$ converges weakly to $\tau_{\textnormal{bl}}^e(\varepsilon)$ under $\mathbb{P}_{\rho}$.
\end{theorem}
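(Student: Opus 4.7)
The plan is to apply Corollary \ref{verification}, which reduces the theorem to two tasks: (i) verifying that Assumption \ref{Assum1} (or equivalently Assumption \ref{Assum3}) is satisfied, in the annealed sense, with scaling $\alpha(n)=n^{-1/2}$ and $\beta(n)=n^{3/2}$, so that $\alpha(n)\beta(n)$ matches $m^{\mathcal{T}_n}$ up to the universal factor coming from the contour traversal; and (ii) checking that $\tau_{\textnormal{bl}}^{e}(\varepsilon)$ is continuous at $\varepsilon$ with probability one under the law of the limiting CRT Brownian motion.

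For (i), I would first appeal to the extended Gromov--Hausdorff convergence \eqref{con2} due to Croydon, which delivers the convergence of the rescaled trees, invariant measures and walks. The remaining ingredient is the convergence of the rescaled local times, for which I would verify the equicontinuity condition \eqref{emb3}; this is exactly Proposition \ref{own}, itself a consequence of the uniform resistance-type concentration bound of Theorem \ref{prepar1} combined with the tightness of the H\"older norms of the contour functions from Theorem \ref{prepar2}. Lemma \ref{emblem} then upgrades this equicontinuity to convergence of finite-dimensional distributions of the local times, completing the verification of the assumption in an annealed sense. Since $(\mathbb{K},d_{\mathbb{K}})$ is separable by Proposition \ref{Prop1.1}, Skorohod's representation theorem provides a common metric space $(F,d_F)$ and a joint probability measure $\tilde{\mathbf P}$ under which the rescaled discrete quadruples converge almost surely to their continuum counterparts as in the conclusions of Lemma \ref{lem2.2}.

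For (ii), I would invoke Proposition \ref{pr2}: for every $\varepsilon\in(0,1)$, $\mathbb N_1$-almost every $e$, the map $\eta\mapsto \tau_{\textnormal{bl}}^{e}(\eta)$ is continuous at $\varepsilon$ with $\mathbf P_\rho^{e}$-probability one. Combining (i) and (ii) with the annealed form of Theorem \ref{Mth} gives, for every $\delta\in(0,1)$ and every $t\ge 0$, the two-sided bound
\[
\mathbb P_\rho\bigl(\tau_{\textnormal{bl}}^{e}(\varepsilon)<t\bigr)\le \liminf_{n\to\infty}\mathbb P_{\rho^n}\bigl(n^{-3/2}\tau^n_{\textnormal{bl}}(\varepsilon)\le t\bigr)\le \limsup_{n\to\infty}\mathbb P_{\rho^n}\bigl(n^{-3/2}\tau^n_{\textnormal{bl}}(\varepsilon)\le t\bigr)\le \mathbb P_\rho\bigl(\tau_{\textnormal{bl}}^{e}(\varepsilon(1-\delta))\le t\bigr).
\]
Sending $\delta\downarrow 0$ and applying dominated convergence on top of (ii) collapses the two outer terms to $\mathbb P_\rho(\tau_{\textnormal{bl}}^{e}(\varepsilon)\le t)$, which yields the claimed weak convergence.

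The main obstacle I anticipate is Proposition \ref{pr2}, namely the continuity of the continuum blanket-time variable at \emph{every} $\varepsilon\in(0,1)$. A direct Fubini argument only produces continuity at Lebesgue-a.e. $\varepsilon$; upgrading this to every $\varepsilon$ requires exploiting the scaling identities $\mathbb N\circ \Theta_a^{-1}=\sqrt a\,\mathbb N$ and $\mathbb N_1\circ \Theta_a^{-1}=\mathbb N_a$ of It\^o's excursion measure, together with the observation that rescaling the excursion by $a$ rescales the blanket-time variable by $a^{-3/2}$ and shifts the continuity-at-$\varepsilon$ event to a continuity-at-$a^{-1}\varepsilon$ event, so that an almost-sure continuity point $a^{-1}\varepsilon$ can always be selected. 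The secondary technical point, the annealed equicontinuity of local times, is by contrast a relatively direct moment calculation once the H\"older control of the contour process and the resistance-metric concentration bound have been imported.
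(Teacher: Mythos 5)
Your proposal is correct and follows essentially the same route as the paper: annealed verification of the convergence assumption via \eqref{con2} together with the equicontinuity of local times (Proposition \ref{own}, built on Theorems \ref{prepar1} and \ref{prepar2}, upgraded by Lemma \ref{emblem} and a Skorohod coupling), combined with the scaling-based continuity of the limiting blanket time (Proposition \ref{pr2}) and the annealed form of Theorem \ref{Mth}, letting $\delta\to 0$ by dominated convergence. The only point to keep in view is the bookkeeping of the constant in $\alpha(n)\beta(n)$ versus $m^{\mathcal{T}_n}=2n$, which you correctly note is absorbed in the time-change of the limiting process, exactly as in the paper.
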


\subsection{The critical Erd\texorpdfstring{\H{o}}{H}s-R\'enyi random graph} \label{lalala}

Our interest in this section shifts to the Erd\H{o}s-R\'enyi random graph at criticality. Take $n$ vertices labelled by $[n]:=\{1,...,n\}$ and put edges between any pair independently with fixed probability $p\in [0,1]$. Denote the resulting random graph by $G(n,p)$. Let $p=c/n$ for some $c>0$. This model exhibits a phase transition in its structure for large $n$. With probability tending to $1$, when $c<1$, the largest connected component has size $O(\log n)$. On the other hand, when $c>1$, we see the emergence of a giant component that contains a positive proportion of the vertices. We will focus here on the critical case $c=1$, and more specifically, on the critical window $p=n^{-1}+\lambda n^{-4/3}$, $\lambda\in \mathbb{R}$. The most significant result in this regime was proven by Aldous \cite{aldous1997brownian}. Fix $\lambda\in \mathbb{R}$ and let $(C_i^n)_{i\ge 1}$ denote the sequence of the component sizes of $G(n,n^{-1}+\lambda n^{-4/3})$. For reasons that are inherent in understanding the structure of the components, we track the surplus of each one, that is the number of vertices that have to be removed in order to obtain a tree. Let $(S_i^n)_{n\ge 1}$ be the sequence of the corresponding surpluses.

\begin{theorem} [\textbf{Aldous \cite{aldous1997brownian}}] \label{PhT}
As $n\to \infty$,
\begin{equation} \label{PhT1}
\left(n^{-2/3} (C_i^n)_{i\ge 1},(S_i^n)_{i\ge 1}\right)\longrightarrow \left((C_i)_{i\ge 1},(S_i)_{i\ge 1}\right)
\end{equation}
in distribution, where the convergence of the first sequence takes place in $\ell^2_{\downarrow}$, the set of positive, decreasing sequences $(x_i)_{i\ge 1}$ with $\sum_{i=1}^{\infty} x_i^2<\infty$. For the second sequence it takes place in the product topology.
\end{theorem}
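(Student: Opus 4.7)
The plan is to follow Aldous's exploration approach. Order the $n$ vertices of $G(n,p)$ (with $p = n^{-1} + \lambda n^{-4/3}$) in some way and explore them by breadth-first search; at step $i$ let $\eta_i^n$ be the number of previously undiscovered neighbors of the $i$-th explored vertex and set
\[
X_0^n := 0, \qquad X_i^n := \sum_{j=1}^{i} \eta_j^n - i.
\]
The key combinatorial fact is that the connected components of $G(n,p)$ correspond bijectively (in order of appearance) to the excursions of $X^n$ above its running minimum, with the length of the $i$-th excursion equal to the size $C_i^n$ of the $i$-th explored component and its surplus $S_i^n$ equal to the number of back-edges (edges from the currently explored vertex to a vertex already in the queue) discovered during that excursion.

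Next I would prove a functional invariance principle for the rescaled walk $W^n_t := n^{-1/3} X^n_{\lfloor n^{2/3} t \rfloor}$. Conditional on the exploration up to step $i-1$, the increment $\eta_i^n$ is Binomial with parameters $(n - N_i, p)$, where $N_i$ is the number of already-discovered vertices. For $i = \lfloor n^{2/3} t \rfloor$ one has $N_i \approx n^{2/3} t$, so the conditional mean of $\eta_i^n - 1$ is approximately $(\lambda - t) n^{-1/3}$ and its variance is approximately $1$. Summing the drifts and rescaling produces the parabolic drift $\lambda t - t^2/2$, while the martingale CLT applied to the centered increments together with Aldous's tightness criterion produces a Brownian term. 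This gives
\[
W^n \longrightarrow W^\lambda, \qquad W^\lambda_t := B_t + \lambda t - \tfrac{t^2}{2},
\]
in $D([0,T],\mathbb{R})$ for every fixed $T>0$, with $B$ a standard Brownian motion.

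Third, I would extract the convergence of ranked excursion lengths and surpluses by a continuous-mapping argument. The drift $-t^2/2$ ensures that the excursion lengths of $W^\lambda$ above its running minimum are almost surely square-summable; list them in decreasing order as $(C_i)_{i\ge 1}$. The ranked-excursion-length functional of a c\`adl\`ag path is continuous at paths all of whose excursions have distinct positive lengths, a property enjoyed by $W^\lambda$-a.e.\ path, so convergence in $D([0,T])$ yields convergence of the top finitely many excursion lengths of $W^n$ to those of $W^\lambda$. For the surpluses, observe that conditional on the exploration, back-edges arise as the outcomes of independent Bernoulli trials with success probability $p$ for each of the $\approx X_i^n - \min_{j\le i} X_j^n$ queued vertices at step $i$. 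After the scaling, these marks converge (conditionally on $W^n$) to a Poisson point process of unit intensity on $\{(t,y) : 0 \le y \le W^\lambda_t - \inf_{s\le t} W^\lambda_s\}$, and each $S_i$ equals the number of marks lying above the $i$-th excursion. Joint convergence in the product topology for the surpluses then follows.

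The main obstacle, which I expect to require the most care, is upgrading the compact-interval convergence to convergence of the full sequence in $\ell^2_\downarrow$. This requires a uniform tail estimate of the form
\[
\lim_{T \to \infty} \limsup_{n \to \infty} \mathbf{E}\Bigl[\sum_{i:\, \text{$i$-th component explored after step } T n^{2/3}} (n^{-2/3} C_i^n)^2 \Bigr] = 0.
\]
One proves this by observing that from time $T n^{2/3}$ onwards the walk $W^n$ has drift of order $-T n^{-1/3}$ per step, so the remainder of the exploration is effectively subcritical with effective parameter $1 - \Theta(T n^{-1/3})$. Standard second-moment bounds for subcritical Erd\H{o}s--R\'enyi then yield $\mathbf{E}\bigl[\sum_i (C_i^n)^2 \mathbf{1}\{\text{$i$-th explored after step } T n^{2/3}\}\bigr] = O(n^{4/3}/T^2)$, and sending $T \to \infty$ after $n \to \infty$ closes the argument. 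A corresponding square-summability of the limit $(C_i)_{i\ge 1}$ (provable from the law of $W^\lambda$) then promotes the finite-dimensional convergence to convergence in $\ell^2_\downarrow$.
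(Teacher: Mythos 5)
The paper does not prove this result; it is quoted directly from Aldous \cite{aldous1997brownian}, and your outline is essentially Aldous's original argument: breadth-first exploration walk, a martingale functional CLT yielding the parabolic-drift Brownian limit reflected at its minimum, continuous-mapping/excursion arguments plus Poisson convergence of the back-edge marks for the surpluses, and a tail bound to upgrade to $\ell^2_\downarrow$. The sketch is sound; the only small inaccuracy is the claimed tail estimate $O(n^{4/3}/T^2)$ — the standard subcritical second-moment (susceptibility) bound for the post-$Tn^{2/3}$ exploration, whose effective parameter is $1-\Theta(Tn^{-1/3})$, gives $O(n^{4/3}/T)$, which is still sufficient to close the argument after letting $T\to\infty$.
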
 

The limit is described by stochastic processes that encode various aspects of the structure of the random graph. Consider a Brownian motion with parabolic drift, $(B^{\lambda}_t)_{t\ge 0}$, where
\begin{equation} \label{parabola}
B^{\lambda}_t:=B_t+\lambda t-\frac{t^2}{2}
\end{equation}
and $(B_t)_{t\ge 0}$ is a standard Brownian motion. Then, the limiting sequence $(C_i)_{i\ge 1}$ has the distribution of the ordered sequence of lengths of excursions of the process $B^{\lambda}_t-\inf_{0\le s\le t} B^{\lambda}_s$, that is the parabolic Brownian motion reflected upon its minimum. Finally, $(S_i)_{i\ge 1}$ is recoved as follows. Draw the graph of the reflected process and scatter points on the place according to a rate 1 Poisson process and keep those that fall between the $x$-axis and the function. Then, $S_i$, $i\ge 1$ are the Poisson number of points that fell in the corresponding excursion with length $C_i$. Observe that the distribution of the limit $(C_i)_{i\ge 1}$ depends on the particular value of $\lambda$ chosen. 

The scaling limit of the largest connected component of the Erd\H{o}s-R\'enyi random graph on the critical window arises as a tilted version of the CRT. Given a pointset $\mathcal{P}$, that is a subset of the upper half plane that contains only a finite number of points in any compact subset, and a positive Brownian excursion $e$, we define $\mathcal{P}\cap e$ as the number of points from $\mathcal{P}$ that fall under the area of $e$. We construct a glued metric space $\mathcal{M}_{e,\mathcal{P}}$ as follows. For each point $(t,x)\in \mathcal{P}\cap e$, let $u_{(t,x)}$ be the unique vertex $p_e(t)\in \mathcal{T}_e$ and $v_{(t,x)}$ be the unique vertex on the path from the root to $u_{(t,x)}$ at a distance $x$ from the root. Let $E_{\mathcal{P}}=\{(u_{(t,x)},v_{(t,x)}):(t,x)\in \mathcal{P}\cap e\}$ be the finite set that consists of the pairs of vertex identifications. Let $\{v_i,u_i\}_{i=1,...,k}$ be $k$ pairs of points that belong to $E_{\mathcal{P}}$. We define a quasi-metric on $\mathcal{T}_e$ by setting 
\[
d_{\mathcal{M}_{e,\mathcal{P}}}(x,y):=\min\left\{d_{\mathcal{T}_e}(x,y),\inf_{i_1,...,i_r} \left\{d_{\mathcal{T}_e}(x,u_{i_1})+\sum_{j=1}^{r-1} d_{\mathcal{T}_e}(v_{i_j},u_{i_{j+1}})+d_{\mathcal{T}_e}(v_r,y)\right\}\right\},
\]
where the infimum is taken over $r$ positive integers, and all subsets $\{i_1,...,i_r\}\subseteq \{1,...,k\}$. Moreover, note that the vertices $i_1,...,i_k$ can be chosen to be distinct. The metric defined above gives the shortest distance between $x,y\in \mathcal{T}_e$ when we glue the vertices $v_i$ and $u_i$ for $i=1,...,k$. It is clear that $d_{\mathcal{M}_{e,\mathcal{P}}}$ defines only a quasi-metric since $d_{\mathcal{M}_{e,\mathcal{P}}}(u_i,v_i)=0$, for every $i=1,...,k$, but $u_i\neq v_i$, for every $i=1,...,k$. We define an equivalence relation on $\mathcal{T}_e$ by setting $x\sim_{E_{\mathcal{P}}} y$ if and only if $d_{\mathcal{M}_{e,\mathcal{P}}}(x,y)=0$. This makes the vertex identification explicit and $\mathcal{M}_{e,\mathcal{P}}$ is defined as 
\[
\mathcal{M}_{e,\mathcal{P}}:=(\mathcal{T}_e/\sim_{E_{\mathcal{P}}},d_{\mathcal{M}_{e,\mathcal{P}}}).
\]
To endow $\mathcal{M}_{e,\mathcal{P}}$ with a canonical measure let $p_{e,\mathcal{P}}$ denote the canonical projection from $\mathcal{T}_e$ to the quotient space $\mathcal{T}_e/\sim_{E_{\mathcal{P}}}$. We define $\pi_{e,\mathcal{P}}:=\pi^e\circ p_{e,\mathcal{P}}^{-1}$, where $\pi^e$ is the image measure on $\mathcal{T}_e$ of the Lebesgue measure $\lambda$ on $[0,\zeta]$ by the canonical projection $p_e$ of $[0,\zeta]$ onto $\mathcal{T}_e$. So, $\pi_{e,\mathcal{P}}=\lambda\circ p_e^{-1} \circ p_{e,\mathcal{P}}^{-1}$. We note that the restriction of $p_{e,\mathcal{P}}$ to $\mathcal{T}_e$ is $p_e$.

For every $\zeta>0$, as in \cite{addario2012continuum}, we define a tilted excursion of length $\zeta$ to be a random variable that takes values in $E$ whose distribution is characterized by 
\[
\mathbf{P}(\tilde{e}\in \mathcal{E})=\frac{\mathbf{E}\left(\one_{\{e\in \mathcal{E}\}}\exp\left(\int_{0}^{\zeta} e(t) dt\right)\right)}{\mathbf{E}\left(\exp\left(\int_{0}^{\zeta} e(t) dt\right)\right)},
\]
for every measurable $\mathcal{E}\subseteq E$. We note here that the $\sigma$-algebra on $E$ is the one generated by the open sets with respect to the supremum norm on $C(\mathbb{R}_{+},\mathbb{R}_{+})$. Write $\mathcal{M}^{(\zeta)}$ for the random compact metric space distributed as $(\mathcal{M}_{\tilde{e},\mathcal{P}},2 d_{\mathcal{M}_{\tilde{e},\mathcal{P}}})$, where $\tilde{e}$ is a tilted Brownian excursion of length $\zeta$ and the random pointset of interest $\mathcal{P}$ is a Poisson point process on $\mathbb{R}_{+}^2$ of unit intensity with respect to the Lebesgue measure independent of $\tilde{e}$. We now give an alternative description of $\mathcal{M}_{\tilde{e},\mathcal{P}}$. It is easy to prove that the number $|\mathcal{P}\cap \tilde{e}|$ of vertex identifications is a Poisson random variable with mean $\int_{0}^{\zeta} \tilde{e}(u) du$. Given that $|\mathcal{P}\cap \tilde{e}|=k$, the co-ordinate $u_{(t,x)}$ has density 
\[
\frac{\tilde{e}(u)}{\int_{0}^{\zeta} \tilde{e}(t) dt}
\]
on $[0,\zeta]$, and given $u_{(t,x)}$, its pair $v_{(t,x)}$ is uniformly distributed on $[0,\tilde{e}(u_{(t,x)})]$. The other $k-1$ vertex identifications are distributed accordingly and independently of the pair $(u_{(t,x)},v_{(t,x)})$. After introducing notation, we are in the position to write the limit of the largest connected component, say $\mathcal{C}_1^n$, as $\mathcal{M}^{(C_1)}$, where $C_1$ has the distribution of the length of the longest excursion of the reflected upon its minimum parabolic Brownian motion as defined in \eqref{parabola}. Moreover, the longest excursion, when conditioned to have length $C_1$, is distributed as a tilted excursion $\tilde{e}$ with length $C_1$. The following convergence is a simplified version of \cite[Theorem 2]{addario2012continuum}. As $n\to \infty$, 

\begin{equation}
\left(n^{-2/3} C_1^n,\left(V(\mathcal{C}_1^n),n^{-1/3} d_{\mathcal{C}_1^n}\right)\right)\longrightarrow \left(C_1,\left(\mathcal{M},d_{\mathcal{M}}\right)\right),
\end{equation}
in distribution, where conditional on $C_1$, $\mathcal{M}\,{\buildrel (d) \over =}\,\mathcal{M}^{(C_1)}$. Moreover, it was shown in \cite{croydon2012scaling}, that the discrete-time simple random walks $X^{\mathcal{C}_1^n}$ on $\mathcal{C}_1^n$, started from a distinguished vertex $\rho^n$ satisfy a distributional convergence of the form

\begin{equation}
\left(n^{-1/3} X_{\lfloor n t \rfloor}^{\mathcal{C}_1^n}\right)_{t\ge 0}\to \left(X_t^{\mathcal{M}}\right)_{t\ge 0},
\end{equation}
where $X^{\mathcal{M}}$ is a diffusion on $\mathcal{M}$, started from a distinguished point $\rho\in \mathcal{M}$. The convergence of the associated stationary probability measures, say $\pi^n$, was not directly proven in \cite{croydon2012scaling}, although the hard work to this direction has been done. More specifically, see \cite[Lemma 6.3]{croydon2012scaling}. The results above can be reformulated in the following distributional convergence in terms of the pointed extended Gromov-Hausdorff topology.
\begin{equation} 
\left(\left(V(\mathcal{C}_1^n),n^{-1/3} d_{\mathcal{C}_1^n},\rho^n\right),\pi^n,\left(n^{-1/3} X^{\mathcal{C}_1^n}_{\lfloor n t \rfloor}\right)_{t\ge 0}\right)\longrightarrow \left(\left(\mathcal{M},d_{\mathcal{M}},\rho\right),\pi^{\mathcal{M}},X^{\mathcal{M}}\right).
\end{equation}

Now, we describe how to generate a connected component on a fixed number of vertices. To any such component we can associate a spanning subtree, the depth-first tree by considering the following algorithm. The initial step places the vertex with label 1 in a stack and declares it open. In the next step vertex 1 is declared as explored and is removed from the start of the stack, where we place in increasing order the neighbors of 1 that have not been seen (open or explored) yet. We proceed inductively. When the set of open vertices becomes empty the procedure finishes. It is obvious that the resulting graph that consists of edges between a vertex that was explored at a given step and a vertex that has not been seen yet at the same step, is a tree. For a connected graph $G$ with $m$ vertices, we refer to this tree as the depth-first tree and write $T(G)$. For $i=0,...,m-1$, let $X(i):=|O(i)|-1$ be the number of vertices seen but not yet fully explored at step $i$. The process $(X(i): 0\le i<m)$ is called the depth-first walk of the graph $G$.

Let $\mathbb{T}_m$ be the set of (unordered) tree labelled by $[m]$. For $T\in \mathbb{T}_m$, its associated depth-first tree is $T$ itself. We call an edge permitted by the first-depth procedure run on $T$ if its addition produces the same depth-first tree. Exactly $X(i)$ edges are permitted at step $i$, and therefore the total number of permitted edges is given by 
\[
a(T):=\sum_{i=0}^{m-1} X(i),
\]
which is called the area of $T$. Given a tree $T$ and a connected graph $G$, $T(G)=T$ if and only if $G$ can be obtained from $T$ by adding a subset of permitted edges by the depth-first procedure. Therefore, writing $\mathbb{G}_T$ for the set of connected graphs $G$ that satisfy $T(G)=T$, we have that $\{\mathbb{G}_T: T\in \mathbb{T}_m\}$ is a partition of the connected graphs on $[m]$, and that the cardinality of $\mathbb{G}_T$ is $2^{a(T)}$, since every permitted edge is included or not.

Back to the question on how to generate a connected component, write $G_m^p$ for the graph with the same distribution as $G(m,p)$ conditioned to be connected. Thus, we focus on generating $G_m^p$ instead.

\begin{lemma} [\textbf{Addario-Berry, Broutin, Goldschmidt \cite{addario2012continuum}}] \label{gener}
Fix $p\in (0,1)$. Pick a random tree $\tilde{T}_m^p$ that has a ``tilted'' distribution which is biased in favor of trees with large area. Namely, pick $\tilde{T}_m^p$ in such a way that
\[
P(\tilde{T}_m^p=T)\propto (1-p)^{-a(T)}, \qquad T\in \mathbb{T}_m.
\]
Add to $\tilde{T}_m^p$ each of the $a(\tilde{T}_m^p)$ permitted edges independently with probability $p$. Call the graph generated $\tilde{G}_m^p$. Then, $\tilde{G}_m^p$ has the same distribution as $G_m^p$.
\end{lemma}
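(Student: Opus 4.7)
The plan is to show that the two distributions coincide by directly computing $P(\tilde{G}_m^p = G)$ for an arbitrary connected graph $G$ on $[m]$ and comparing with the easily computed conditional law of $G(m,p)$ given connectedness. The key structural input, already recorded in the paragraph preceding the lemma, is that the depth-first exploration defines a deterministic map $G \mapsto T(G)$ from the set of connected graphs on $[m]$ to $\mathbb{T}_m$, and that $G \in \mathbb{G}_{T(G)}$, so the non-tree edges $E(G) \setminus E(T(G))$ form a subset of the $a(T(G))$ edges permitted by $T(G)$. In particular, $G$ is uniquely recovered from the pair $(T(G), E(G) \setminus E(T(G)))$, and conversely every pair consisting of $T \in \mathbb{T}_m$ together with a subset $F$ of the edges permitted by $T$ produces a connected graph whose depth-first tree is $T$.

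First I would compute $P(\tilde{G}_m^p = G)$. Writing $Z := \sum_{T \in \mathbb{T}_m} (1-p)^{-a(T)}$ for the normalising constant in the definition of $\tilde{T}_m^p$ and setting $T := T(G)$, the generating procedure prescribes $\tilde{T}_m^p = T$ followed by the independent inclusion of the $|E(G)| - (m-1)$ non-tree edges of $G$ (each with probability $p$) and exclusion of the remaining $a(T) - (|E(G)| - (m-1))$ permitted edges (each with probability $1-p$). This yields
\[
P(\tilde{G}_m^p = G) = Z^{-1} (1-p)^{-a(T)} \cdot p^{|E(G)|-(m-1)} (1-p)^{a(T) - (|E(G)|-(m-1))} = Z^{-1} p^{|E(G)|-(m-1)} (1-p)^{m-1-|E(G)|}.
\]
The decisive feature is that $a(T)$ cancels, so the right-hand side depends on $G$ only through $|E(G)|$.

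Next I would compare with $P(G_m^p = G)$. Since $P(G(m,p) = G) = p^{|E(G)|} (1-p)^{\binom{m}{2} - |E(G)|}$ for every graph on $[m]$, conditioning on connectedness gives, for each connected $G$ on $[m]$, that $P(G_m^p = G)$ is proportional to $p^{|E(G)|} (1-p)^{-|E(G)|}$ with a normalising factor depending only on $m$ and $p$. This matches the expression for $P(\tilde{G}_m^p = G)$ above up to a factor independent of $G$. Since both $\tilde{G}_m^p$ and $G_m^p$ are probability distributions supported exactly on the set of connected graphs on $[m]$, and the two are proportional on this common support, they must coincide.

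I do not expect any real obstacle: the whole argument is essentially a one-line calculation. The only subtle point is the cancellation of $(1-p)^{a(T)}$ in the display above, and this cancellation is precisely what the tilting weight $(1-p)^{-a(T)}$ in the definition of $\tilde{T}_m^p$ is engineered to produce. Once one recognises that it reduces $P(\tilde{G}_m^p = G)$ to a function of $|E(G)|$ alone, the agreement with the conditional distribution $P(G_m^p = G)$ is automatic.
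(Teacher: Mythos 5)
Your argument is correct and is essentially the standard one: the paper itself only quotes this lemma from \cite{addario2012continuum} without proof, and your computation (the cancellation of $(1-p)^{a(T)}$ leaving a weight depending only on $|E(G)|$, compared with the conditional law of $G(m,p)$ given connectedness) is exactly the argument given in that reference, resting on the same structural facts about depth-first trees and permitted edges recorded in the paragraph preceding the lemma. No gaps.
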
 

We use $\rho^m$ to denote the root of $\tilde{T}_m^p$. In what follows we give a detailed description on how we can transfer the results proved in Section \ref{CGWT}. We denote by  $\tilde{V}_m=(\tilde{V}_m(i): 0\le i\le 2 m)$ the contour process of $\tilde{T}_m^p$, and by  $\tilde{v}_m=(((m/\zeta)^{-1/2} \tilde{V}_m(2 (m/\zeta) s): 0\le s\le \zeta)$ the rescaled contour process of positive length $\zeta$ as well. We start by showing that, for some $\alpha>0$, the sequence $||\tilde{v}_m||_{H_{\alpha}}$ of H\"older norms is tight.

\begin{lemma} \label{tiltcont1}
Suppose that p=p(m) in such a way that $m p^{2/3}\to \zeta$, as $m\to \infty$. There exists $\alpha\in (0,1/2)$ such that for every $\varepsilon>0$ there exists a finite real number $M_{\varepsilon}$ such that 
\begin{equation} 
P\left(\sup_{s,t\in [0,1]}\frac{|\tilde{v}_m(s)-\tilde{v}_m(t)|}{|t-s|^{\alpha}}\le M_{\varepsilon}\right)\ge 1-\varepsilon.
\end{equation}
\end{lemma}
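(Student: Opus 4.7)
My plan is to transfer the H\"older tightness known for the rescaled contour of a uniform random tree on $[m]$ to the corresponding statement for the tilted tree $\tilde{T}_m^p$, via the Radon-Nikodym comparison afforded by Lemma \ref{gener}. Let $T_m$ be uniformly distributed on $\mathbb{T}_m$. It is well known that $T_m$ is distributed in shape as a Poisson$(1)$ Galton-Watson tree conditioned on total progeny $m$, so Theorem \ref{prepar2} (in the version extended by Marzouk) applies to its rescaled contour $v_m(u) := m^{-1/2} V_m(2mu)$ on $[0,1]$. A straightforward change of variables, $s = \zeta u$, gives $\|\tilde{v}_m\|_{H_\alpha,[0,\zeta]} = \zeta^{1/2-\alpha} \|\check{v}_m\|_{H_\alpha,[0,1]}$, where $\check{v}_m(u) := m^{-1/2} \tilde{V}_m(2mu)$ is the analogue of $v_m$ for $\tilde{T}_m^p$. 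Hence it suffices to establish tightness of $\|\check{v}_m\|_{H_\alpha}$.

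By Lemma \ref{gener}, for any measurable $A\subseteq \mathbb{T}_m$,
\[
P(\tilde{T}_m^p \in A) = \frac{\mathbf{E}\left[(1-p)^{-a(T_m)} \mathbf{1}_A(T_m)\right]}{\mathbf{E}\left[(1-p)^{-a(T_m)}\right]} \le \mathbf{E}\left[(1-p)^{-a(T_m)} \mathbf{1}_A(T_m)\right],
\]
since $(1-p)^{-a(T_m)}\ge 1$ forces the denominator to be at least $1$. I would split the right-hand side according to the event $G_C := \{a(T_m) \le C m^{3/2}\}$. On $G_C$, the Radon-Nikodym factor is bounded by $(1-p)^{-Cm^{3/2}} \le \exp(2pCm^{3/2})$, which tends to $\exp(2C\zeta^{3/2})$ since $mp^{2/3}\to\zeta$. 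Taking $A = \{\|\check{v}_m\|_{H_\alpha} > M\}$, this piece is then controlled via the tightness of $\|v_m\|_{H_\alpha}$ from Theorem \ref{prepar2} by choosing $M$ sufficiently large depending on $C$ and $\zeta$.

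The complementary event $G_C^c$ has to be made small uniformly in $m$. The area scaling $m^{-3/2} a(T_m) \to \int_0^1 e(u)\,du$ in distribution, with $e$ a standard Brownian excursion, follows from the convergence of the rescaled depth-first walk of $T_m$ to a Brownian excursion \cite{aldous1993continuum}. To handle the exponential weight $(1-p)^{-a(T_m)}$ on $G_C^c$, I would appeal to known exponential moment bounds for the area under a Brownian excursion (the Airy distribution); alternatively, one may truncate the area directly inside the tilting and show that the truncated law is close in total variation to that of $\tilde{T}_m^p$.

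The main obstacle is precisely this uniform exponential moment control of $a(T_m)$: a mere $L^1$ bound on $a(T_m)/m^{3/2}$ is insufficient when the Radon-Nikodym factor grows exponentially in $a(T_m)$, so we actually need some version of exponential tightness. Resolving this requires either a sharp tail estimate on the area (available in the literature on the Airy distribution, and also implicit in the scaling limit results leading up to Theorem \ref{PhT}), or a truncation trick that circumvents the need for direct large-area control.
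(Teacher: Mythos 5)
Your overall strategy is the same as the paper's: transfer the H\"older tightness of Theorem \ref{prepar2} (valid for the uniform labelled tree, i.e.\ the conditioned Poisson$(1)$ Galton--Watson tree) to $\tilde{T}_m^p$ through the tilting identity of Lemma \ref{gener}, after reducing to $\zeta=1$ by a deterministic rescaling. Your lower bound $E[(1-p)^{-a(T_m)}]\ge 1$ for the normalizing constant is correct and even simpler than what the paper does (the paper shows $E[(1-p)^{-a(T_m)}]\to E[\exp(\int_0^1 e(u)\,du)]>0$ via \cite[Theorem 3]{marckert2003depth} and uniform integrability). Where the two arguments diverge is in how the exponential weight is controlled: the paper does not split on the area at all, but applies Cauchy--Schwarz to $E[\mathbf{1}_{\{\|v_m\|_{H_\alpha}\ge K_\varepsilon\}}(1-p)^{-a(T_m)}]$ and then quotes the uniform-in-$m$ Gaussian-type bound $E[(1-p)^{-\xi a(T_m)}]\le K_1 e^{K_2 c^2\xi^2}$ (with $p\le cm^{-3/2}$) from \cite[Lemma 14]{addario2012continuum}, applied with $\xi=2$; this disposes of both the ``good'' and the ``bad'' area contributions in one stroke.

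The unresolved step you flag is a genuine gap as written, and it is exactly the point where your plan needs the above input. On $G_C^c=\{a(T_m)>Cm^{3/2}\}$ you must bound $E[(1-p)^{-a(T_m)}\mathbf{1}_{G_C^c}]$ uniformly in $m$, and neither the distributional convergence $m^{-3/2}a(T_m)\to\int_0^1 e$ nor tail estimates for the limiting Airy law can deliver this: convergence in distribution gives no control of exponential moments of the prelimit variables, and the quantity you need is an expectation of an exponentially growing functional of the \emph{discrete} tree. The truncation variant suffers from the same circularity, since showing the truncated tilted law is close in total variation to the law of $\tilde{T}_m^p$ again amounts to bounding the same truncated exponential moment. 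The missing ingredient is precisely a uniform-in-$m$ exponential moment (equivalently, Gaussian tail) estimate for $m^{-3/2}a(T_m)$, which is available as \cite[Lemma 14]{addario2012continuum}; once you invoke it, your splitting argument closes (e.g.\ by Cauchy--Schwarz on $G_C^c$ together with Markov's inequality for $P(a(T_m)>Cm^{3/2})$), and in fact the splitting becomes unnecessary, as the paper's one-line Cauchy--Schwarz shows.
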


\begin{proof}
For notational simplity we assume that $\zeta=1$. The general result follows by Brownian scaling. Let $T_m$ be a tree chosen uniformly from $[m]$. Write $V_m$ and $v_m$ for its associated contour process and normalized contour process respectively. We note here that Theorem \ref{prepar2} is stated in the more general situation of Galton-Watson trees with critical offspring distribution that has finite variance and exponential moments, conditioned to have $m$ vertices. If the offspring distribution to be Poisson with mean $1$, then the conditioned tree is a uniformly distributed labelled tree. Then, by Lemma \ref{gener}, if $K_{\varepsilon}$ is the finite real number for which \eqref{proof2} holds,
\[
P\left(\sup_{s,t\in [0,1]}\frac{|\tilde{v}_m(s)-\tilde{v}_m(t)|}{|t-s|^{\alpha}}\ge K_{\varepsilon}\right)=\frac{E\left[\one_{\left\{\sup_{s,t\in [0,1]}\frac{|v_m(s)-v_m(t)|}{|t-s|^{\alpha}}\ge K_{\varepsilon}\right\}} (1-p)^{-a(T_m)}\right]}{E\left[(1-p)^{-a(T_m)}\right]}.
\]
Since $m p^{2/3}\to 1$, as $m\to \infty$, there exists $c>0$ such that $p\le c m^{-3/2}$, for every $m\ge 1$. Moreover, from \cite[Lemma 14]{addario2012continuum} we can find universal constants $K_1$, $K_2>0$ such that $E\left[(1-p)^{-\xi a(T_m)}\right]<K_1 e^{K_2 c^2 \xi^2}$, for every $\xi>0$. Using this along with the Cauchy-Schwarz inequality yields
\begin{align} \label{taram}
&P\left(\sup_{s,t\in [0,1]}\frac{|\tilde{v}_m(s)-\tilde{v}_m(t)|}{|t-s|^{\alpha}}\ge K_{\varepsilon}\right) \nonumber \\
&\le \frac{P\left(\sup_{s,t\in [0,1]}\frac{|v_m(s)-v_m(t)|}{|t-s|^{\alpha}}\ge K_{\varepsilon}\right)^{1/2} \left(E\left[(1-p)^{-2 a(T_m)}\right]\right)^{1/2}}{E\left[(1-p)^{-a(T_m)}\right]}\le \frac{(\varepsilon K_1e^{4 K_2 c^2})^{1/2}}{E\left[(1-p)^{-a(T_m)}\right]}. 
\end{align}
Finally, recall that $a(T_m)=\sum_{i=0}^{m-1} X_m(i)$, where $(X_m(i): 0\le i\le m)$ is the depth-first walk associated with $T_m$ (for convenience we have put $X_m(m)=0$). From \cite[Theorem 3]{marckert2003depth} we know that, as $m\to \infty$
\[
(m^{-1/2} X_m(\lfloor m t \rfloor))_{t\in [0,1]}\to (e(t))_{t\in [0,1]},
\]
in distribution in $D([0,1],\mathbb{R}_{+})$, where $(e(t))_{t\in [0,1]}$ is a normalized Brownian excursion. Writing 
\[
(1-p)^{-a(T_m)}=(1-p)^{-\sum_{i=0}^{m-1} X_m(i)}=(1-p)^{-m^{3/2} \int_{0}^{1} m^{-1/2} X_m(\lfloor m t \rfloor) dt}
\]
and using that the sequence $(1-p)^{-a(T_m)}$ is uniformly integrable, we deduce that 
\[
E\left[(1-p)^{-a(T_m)}\right]\to E\left[\exp \left(\int_{0}^{1} e(u) du\right)\right]>0,
\]
as $m\to \infty$. The desired result follows from \eqref{taram} and the fact that the sequence of random variables $\{||\tilde{v}_m||_{H_{\alpha}}\}_{m\in I}$ is bounded in probability (tight) whenever $I$ is finite.

\end{proof}

It is now immediate to check that the local times $(L^m_t(x))_{x\in V(G_m^p),t\ge 0}$ of the corresponding simple random walk on $G_m^p$ are equicontinuous under the annealed law (which is defined similar to \eqref{exemplary}) after rescaling. The proof of the next lemma relies heavily on the same methods used to establish Proposition \ref{own}, and therefore we will make use of the parts that remain unchanged.

\begin{lemma} \label{tiltcomp}
Suppose that p=p(m) in such a way that $m p^{2/3}\to \zeta$, as $m\to \infty$. For every $\varepsilon>0$ and $T>0$,
\[
\lim_{\delta\rightarrow 0} \limsup_{m\to \infty} \mathbb{P}_{\rho^m} \left(\sup_{\substack{y,z\in V(G_m^p): \\ m^{-1/2} R_{G_m^p}(y,z)<\delta}} \sup_{t\in [0,T]} m^{-1/2} |L_{m^{3/2} t}^m(y)-L_{m^{3/2} t}^m(z)|\ge \varepsilon \Bigg | s(G_m^p)=s\right)=0.
\]
\end{lemma}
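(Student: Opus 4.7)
The strategy closely follows that of Proposition \ref{own}, with $\tilde{T}_m^p$ playing the role of the conditioned Galton--Watson tree. By Lemma \ref{gener}, conditioning on $s(G_m^p)=s$ realises $G_m^p$ as $\tilde{T}_m^p$ with $s$ additional edges chosen uniformly from among the permitted ones. Since adding edges only decreases effective resistance, $R_{G_m^p}(y,z)\le d_{\tilde{T}_m^p}(y,z)$, and in particular $r(G_m^p)\le r(\tilde{T}_m^p)\le 2^{\alpha}Mm^{1/2}$ on the Hölder event of Lemma \ref{tiltcont1}; meanwhile $m(G_m^p)=2(m-1+s)=O(m)$, so the natural time-scale $r(G_m^p)m(G_m^p)$ is comparable with $m^{3/2}$.

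Next, I invoke the concentration estimate of Theorem \ref{prepar1}, whose proof from \cite{croydon2015moduli} relies only on general electrical-network tools (Gaussian free field bounds together with majorising measures) and extends verbatim to $G_m^p$. This yields, for each pair $y,z$ and each $p\ge 1$,
\[
\mathbf{E}_{\rho^m}^{G_m^p}\!\!\left[\sup_{t\in[0,T]}m^{-p/2}\big|L^m_{m^{3/2}t}(y)-L^m_{m^{3/2}t}(z)\big|^p\right]\le c_p\bigl(m^{-1/2}R_{G_m^p}(y,z)\bigr)^{p/2}.
\]
Using $R_{G_m^p}\le d_{\tilde{T}_m^p}$ together with the Hölder bound $m^{-1/2}d_{\tilde{T}_m^p}(f(2mt_1),f(2mt_2))=d_{\tilde{v}_m}(t_1,t_2)\le M|t_1-t_2|^\alpha$ from Lemma \ref{tiltcont1}, the right-hand side is dominated by $c_p'|t_1-t_2|^{\alpha p/2}$ whenever $f(2mt_i)\in\{y,z\}$, where $f$ denotes the contour of $\tilde{T}_m^p$. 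Choosing $p$ with $\alpha p>2$ and applying Kolmogorov's tightness criterion to the process $s\mapsto(m^{-1/2}L^m_{m^{3/2}t}(f(2ms)))_{t\in[0,T]}$ gives tightness in $C([0,1],C[0,T])$, which via the tree coding translates into equicontinuity of the rescaled local times in the tree metric $d_{\tilde{T}_m^p}$ under the annealed conditional law, exactly as in Proposition \ref{own}.

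It remains to convert this tree-metric equicontinuity into equicontinuity in the strictly weaker resistance metric $R_{G_m^p}$, and this is the main obstacle. Since $s(G_m^p)=s$ is fixed, the graph contains only $s$ surplus edges, so any pair $(y,z)$ with $R_{G_m^p}(y,z)<\delta m^{1/2}$ yet $d_{\tilde{T}_m^p}(y,z)\gg\delta m^{1/2}$ must realise its small resistance by routing through at least one of the $2s$ surplus endpoints. Splitting $|L^m(y)-L^m(z)|$ by the triangle inequality, the segments between consecutive such endpoints are close in the tree metric and are controlled by the previous step, while each traversal of a single surplus edge $\{y',z'\}$ contributes at most $\lambda\sqrt{r(G_m^p)R_{G_m^p}(y',z')}=O(\lambda m^{1/4})$ by the pointwise concentration bound (using $R_{G_m^p}(y',z')\le 1$), hence $o(1)$ after the $m^{-1/2}$ rescaling, with the Gaussian tail absorbed by a union bound over the $2s$ endpoints. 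This metric-bridging step, quantifying the effect of a bounded number of shortcut edges on the modulus of continuity, is where the fixed-surplus conditioning is essential.
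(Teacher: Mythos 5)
Your first two paragraphs reproduce the paper's own proof: the paper likewise realises $G_m^p$ via Lemma \ref{gener}, conditions on the H\"older event of Lemma \ref{tiltcont1}, bounds $R_{\tilde{G}_m^p}(y,z)$ by the tree path length and hence by $C m^{1/2}|t_1-t_2|^{\alpha}$ in contour time, notes $r(\tilde{G}_m^p)\le C m^{1/2}2^{\alpha}$ and $m(\tilde{G}_m^p)=2(m+s)$, applies the concentration estimate of Theorem \ref{prepar1} (i.e.\ \eqref{proof4} with $\mathcal{T}_n$ replaced by $\tilde{G}_m^p$), and then finishes with the Kolmogorov/Billingsley tightness argument exactly ``in the manner of Proposition \ref{own}''. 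Where you genuinely diverge is your third paragraph: the paper contains no metric-bridging step at all. It does not need one in its own formulation because the moment bound it feeds into the chaining is already expressed in the resistance metric of the glued graph, namely $C_2\bigl(m^{-1/2}R_{\tilde{G}_m^p}(y,z)\bigr)^{q/2}$; the tree distance and the contour H\"older bound enter only as an upper bound for this quantity, so the conclusion is drawn directly for pairs with $m^{-1/2}R_{G_m^p}(y,z)<\delta$ and the surplus edges are never treated separately. Your extra step does flag a real subtlety in that chaining (pairs can be resistance-close while far apart in the tree/contour parametrisation, and tree-metric equicontinuity alone would not cover them), and with a fixed surplus it can be carried out; what it buys is an explicit reconciliation of the two metrics that the paper leaves implicit. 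But note it is also the loosest part of your write-up: resistance is not a path metric, so ``must realise its small resistance by routing through a surplus endpoint'' requires an actual electrical argument (for instance, shorting the at most $2s$ surplus endpoints and using that the conductance from $y$ to this finite set is at most the sum of the pairwise conductances, which forces $y$ and $z$ to be tree-close to surplus endpoints up to a factor depending on $s$), and the assertion that ``the segments between consecutive such endpoints are close in the tree metric'' is not justified as stated; the bound $O(\lambda m^{1/4})$ for a surplus-edge crossing with a union bound over the $2s$ endpoints is fine. In short: same key lemmas and same core estimate as the paper, plus an additional, heuristically argued step that the paper's own proof does without.
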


\begin{proof}
Again, for simplicity let $\zeta=1$. The general result follows by Brownian scaling. Call $\tilde{G}_m^p$ the graph generated by the process of adding $\text{Bin}(a(\tilde{T}_m^p),p)$ number of surplus edges to  $\tilde{T}_m^p$. We view $\tilde{G}_m^p$ as the metric space $\tilde{T}_m^p$ that includes the edges (of length 1) that have been added. Recall that $\tilde{G}_m^p$ has the same distribution as $G_m^p$. From Lemma \ref{tiltcont1}, there exists $C_1>0$ and $\alpha\in (0,1/2)$ such that 
\begin{equation} \label{tiltcont2}
\tilde{v}_m(s)+\tilde{v}_m(t)-2 \min_{r\in [s\wedge t,s\vee t]} \tilde{v}_m(r)\le C_1 |t-s|^{\alpha}, \qquad \forall s,t\in [0,1]
\end{equation}
with probability arbitrarily close to 1. Conditioned on $\tilde{v}_m$ satisfying \eqref{tiltcont2}, and given $t_1,t_2\in [0,1]$, with $2 m t_1$ and $2 m t_2$ integers, such that $p_{\tilde{v}_m}(t_1)=y$ and $p_{\tilde{v}_m}(t_2)=z$, the resistance on $\tilde{G}_m^p$ between $y$ and $z$ is smaller than the total length of the path between $y$ and $z$ on $\tilde{T}_m^p$. Therefore, using \eqref{tiltcont2}
\begin{align*}
R_{\tilde{G}_m^p}(y,z)&\le \tilde{V}_m(2 m t_1)+\tilde{V}_m(2 m t_2)-2 \min_{r\in [t_1\wedge t_2,t_1\vee t_2]} \tilde{V}_m(2 m r)\\
&=m^{1/2} (\tilde{v}_m(t_1)+\tilde{v}_m(t_2)-2 \min_{r\in [t_1\wedge t_2,t_1\vee t_2]} \tilde{v}_m(r))=m^{1/2} d_{\tilde{v}_m}(t_1,t_2)\le C_1 m^{1/2} |t_1-t_2|^{\alpha},
\end{align*}
which also indicates that, on the event that \eqref{tiltcont2} holds, the maximum resistance $r(\tilde{G}_m^p)$ is bounded above by $C m^{1/2} 2^{\alpha}$. Moreover, conditioning on $s(\tilde{G}_m^p)=s$, $m(\tilde{G}_m^p)=2 (m+s)$. Hence, by \eqref{proof4}, which still holds by replacing $\mathcal{T}_n$ by $\tilde{G}_m^p$, we can show that, conditioned on $\tilde{v}_m$ satisfying \eqref{tiltcont2}, for any fixed $q>0$,
\begin{align*}
\mathbf{E}_{\rho^m}^{\tilde{G}_m^p} \left[\sup_{t\in [0,T]} m^{-1/2} \left|L_{m^{3/2} t}^m(y)-L_{m^{3/2} t}^m(z) \right|^q\bigg| s(\tilde{G}_m^p)=s\right]&\le C_2  (m^{-1/2} R_{\tilde{G}_m^p}(y,z))^{q/2} 2^{-\alpha q}\\
&\le C_3 \left|(t_1/4)-(t_2/4)\right|^{\alpha q/2},
\end{align*}
where $t_1$ and $t_2$ are such that $2 m t_1$ and $2 m t_2$ are integers, such that $p_{\tilde{v}_m}(t_1)=y$ and $p_{\tilde{v}_m}(t_2)=z$. The rest of proof is finished in the manner of Proposition \ref{own}, and therefore we omit it.

\end{proof}

For notational simplicity, the next result is stated for the largest connected component on the critical window. In fact, it holds for the family of the $i$-th largest connected components, $i\ge 1$. In this case, let us denote by $\mathcal{C}_1^n$ the largest connected component of $G(n,n^{-1}+\lambda n^{-4/3})$, and by $(L_t^n(x))_{x\in V(\mathcal{C}_1^n),t\ge 0}$ the local times of the simple random walk on $\mathcal{C}_1^n$.

\begin{proposition} \label{ownown}
For every $\varepsilon>0$ and $T>0$, 
\[
\lim_{\delta\rightarrow 0} \limsup_{n\to \infty} \mathbb{P}_{\rho^n} \left(\sup_{\substack{y,z\in V(\mathcal{C}_1^n): \\ n^{-1/3} R_{\mathcal{C}_1^n}(y,z)<\delta}} \sup_{t\in [0,T]} n^{-1/3} |L_{n t}^n(y)-L_{n t}^n(z)|\ge \varepsilon\right)=0.
\]
\end{proposition}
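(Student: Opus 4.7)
The plan is to transfer the equicontinuity of Lemma \ref{tiltcomp}, which holds for the connected random graph $G_m^p$ on a fixed number of vertices, to the random component $\mathcal{C}_1^n$, whose size and surplus are random but controlled by Aldous's Theorem \ref{PhT}. First I would fix $\eta>0$ and use the distributional convergence of $(n^{-2/3}C_1^n,S_1^n)$ to choose $K=K(\eta)<\infty$ such that the event $A_n(K):=\{n^{-2/3}C_1^n\in[K^{-1},K],\ S_1^n\le K\}$ has probability at least $1-\eta$ for every large enough $n$; since $A_n(K)^c$ contributes at most $\eta$ to the probability in the statement and $\eta$ is arbitrary, it suffices to work on $A_n(K)$.

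Next I would invoke the standard description of the component structure of $G(n,p)$: conditional on $(C_1^n,S_1^n)=(m,s)$, the graph $\mathcal{C}_1^n$ has, up to vertex relabeling, the same law as $G_m^p$ conditioned to have surplus $s$, with $p=n^{-1}+\lambda n^{-4/3}$. On $A_n(K)$ we have $m\asymp n^{2/3}$, so that $m^{-1/2}\asymp n^{-1/3}$ and $m^{3/2}\asymp n$ with constants depending only on $K$, and moreover $mp^{2/3}=(n^{-2/3}C_1^n)(1+\lambda n^{-1/3})^{2/3}$ lies in a compact subinterval of $(0,\infty)$ once $n$ is large. Consequently the equicontinuity inequality in the present proposition, stated in the $(n^{-1/3},n)$-scaling, is implied by the analogous inequality of Lemma \ref{tiltcomp} in the $(m^{-1/2},m^{3/2})$-scaling, after adjusting $\delta$ and $T$ by bounded factors.

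It remains to invoke Lemma \ref{tiltcomp} uniformly over $m\in[K^{-1}n^{2/3},Kn^{2/3}]$ and $s\le K$. Its ingredients, namely the H\"older tightness of the tilted contour process furnished by Lemma \ref{tiltcont1} and the universal local-time concentration on a resistance metric furnished by Theorem \ref{prepar1}, produce constants that depend on $\varepsilon$, $T$ and an upper bound on $mp^{2/3}$ but not otherwise on $m$ or $p$, so the argument of Lemma \ref{tiltcomp} carries through uniformly. The cleanest way to rigorize this is a subsequential contradiction: if the $\limsup$ in the statement were positive, extract a subsequence along which $(n^{-2/3}C_1^n,S_1^n)$ converges in distribution (via Prokhorov and Skorohod) to some $(\zeta^*,s^*)$ with $\zeta^*\in[K^{-1},K]$ and $s^*\le K$, and then apply Lemma \ref{tiltcomp} to the corresponding subsequence $(m_n,p_n)$ with $m_n p_n^{2/3}\to\zeta^*$ to produce the contradiction. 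The main technical obstacle is precisely this uniformity step; once extracted from Lemma \ref{tiltcomp}, integrating over the conditioning on $(C_1^n,S_1^n)$ and sending $\eta\to 0$ finishes the proof.
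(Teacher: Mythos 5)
Your proposal follows essentially the same route as the paper: condition on the size and surplus of $\mathcal{C}_1^n$, transfer the problem to $G_m^p$ with $mp^{2/3}$ bounded via Lemma \ref{tiltcomp}, and dispose of the exceptional event using the tightness of $(n^{-2/3}C_1^n,S_1^n)$ from Theorem \ref{PhT}. Your treatment is in fact slightly more careful than the paper's, which takes a supremum over all $m\ge 1$ of the conditional probabilities without spelling out the lower bound on the component size or the uniformity of Lemma \ref{tiltcomp} in $(m,p)$ that your subsequential argument addresses.
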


\begin{proof}
Conditioning on the size and the surplus of $\mathcal{C}_1^n$, we observe that for every $\varepsilon_1>0$ and $T_1>0$,
\begin{align} \label{boring}
&\mathbb{P}_{\rho^n} \left(\sup_{\substack{y,z\in V(\mathcal{C}_1^n): \\ n^{-1/3} R_{\mathcal{C}_1^n}(y,z)<\delta_1}} \sup_{t\in [0,T_1]} n^{-1/3} |L_{n t}^n(y)-L_{n t}^n(z)|\ge \varepsilon_1\right) 
\\ \nonumber \le
&\sum_{m=1}^{\lfloor A n^{2/3} \rfloor} \sum_{s=0}^{S} \mathbb{P}_{\rho^n} \left(\sup_{\substack{y,z\in V(\mathcal{C}_1^n): \\ n^{-1/3} R_{\mathcal{C}_1^n}(y,z)<\delta_1}} \sup_{t\in [0,T_1]} n^{-1/3} |L_{n t}^{n}(y)-L_{n t}^{n}(z)|\ge \varepsilon_1\bigg ||V(\mathcal{C}_1^n)|=m,s(\mathcal{C}_1^n)=s\right) 
\\ \nonumber \cdot
&P(|V(\mathcal{C}_1^n)|=m,s(\mathcal{C}_1^n)=s)+P(|V(\mathcal{C}_1^n)|>\lfloor A n^{2/3} \rfloor)+P(s(\mathcal{C}_1^n)>S),
\end{align}
for large enough constants $A$ and $S$. Since the largest components on the critical window $p=n^{-1}+\lambda n^{-4/3}$, $\lambda\in \mathbb{R}$ have sizes of order $n^{2/3}$, we have that $\mathcal{C}_1^n$ conditioned to have size $m$ and surplus $s$ has the same distribution with $G_m^p$, where $m$ and $p=p(m)$ are in such a way that $m p^{2/3}\to \zeta$, as $m\to \infty$, for some $\zeta>0$. Therefore, we acan bound \eqref{boring} by 
\begin{align*}
&\sup_{m\ge 1} \mathbb{P}_{\rho^n} \left(\sup_{\substack{y,z\in V(G_m^p): \\ m^{-1/2} R_{G_m^p}(y,z)<\delta_2}} \sup_{t\in [0,T_2]} m^{-1/2} |L_{m^{3/2} t}^{m}(y)-L_{m^{3/2} t}^{m}(z)|\ge \varepsilon_2\bigg |s(G_m^p)=s\right) 
\\ \\+
&P(|V(\mathcal{C}_1^n)|>\lfloor A n^{2/3} \rfloor)+P(s(\mathcal{C}_1^n)>S).
\end{align*}
From Theorem \ref{PhT1}, 
\begin{equation} \label{boring2}
\lim_{A\to \infty} \limsup_{n\to \infty} P(n^{-2/3} |V(\mathcal{C}_1^n)|>A)=0.
\end{equation}
Furthermore, as $n\to \infty$, 
\[
s(\mathcal{C}_1^n)\to \text{Poi}\left(\int_{0}^{\zeta} \tilde{e}(u) du\right),
\]
where $\text{Poi}\left(\int_{0}^{\zeta} \tilde{e}(t) dt\right)$ denotes a Poisson random variable with mean the area under a tilted excursion of length $\sigma$ (see \cite[Corollary 23]{addario2012continuum}), and as a consequence tightness of process that encodes the surplus of $\mathcal{C}_1^n$ follows, i.e.
\begin{equation} \label{boring3}
\lim_{S\to \infty} \limsup_{n\to \infty} P(s(\mathcal{C}_1^n)>S).
\end{equation}
Now, the proof is finished by combining \eqref{boring2} and \eqref{boring3} with the equicontinuity result of Lemma \ref{tiltcomp}.

\end{proof}

\subsubsection{Continuity of blanket times of Brownian motion on \texorpdfstring{$\mathcal{M}$}{M}} \label{excm}

To prove continuity of the $\varepsilon$-blanket time of the Brownian motion on $\mathcal{M}$, we first define a $\sigma$-finite measure on the product space of positive excursions and random pointsets of $\mathbb{R}_{+}^2$. Throughout this section we denote the Lebesgue measure on $\mathbb{R}_{+}^2$ by $\ell$. We define the aforementioned measure $\mathbf{N}(d(e,\mathcal{P}))$ by setting 
\begin{equation} \label{meas1}
\mathbf{N}(d e,|\mathcal{P}|=k, (d x_1,...,d x_k)\in A_1\times...\times A_k):=\int_{0}^{\infty}  f_L(l) \mathbb{N}_l (d e) \frac{e^{-1}}{k!} \prod_{i=1}^{k} \frac{\ell(A_i\cap A_e)}{\ell(A_e)},
\end{equation} 
where $f_L(l):=d l/\sqrt{2 \pi l^3}$, $l\ge 0$ gives the density of the length of the excursion $e$ and $A_e:=\{(t,x): 0\le x\le e(t)\}$ denotes the area under its graph. In other words, the measure picks first an excursion length according to $f_L(l)$ and, given $L=l$, it picks a Brownian excursion of that length. Then, independently of $e$ it chooses $k$ points according to a Poisson with unit mean, which distributes uniformly on the area under the graph of $e$. 

It turns out that this is an easier measure to work with when applying our scaling argument to prove continuity of the blanket times. Also, as will see later, $\mathbf{N}$ is absolutely continuous with respect to the canonical measure $\mathbf{N}^{t,\lambda}(d(e,\mathcal{P}))$ that first at time $t$ picks a tilted Brownian excursion $e$ of a randomly chosen length $l$, and then independently of $e$ chooses $k$ points distributed as a Poisson random variable with mean $\int_{0}^{l} e(t) dt$, which as before are distributed uniformly on the area under the graph of $e$. To fully describe this measure let $\mathbb{N}^{t,\lambda}$ denote the measure (for excursions starting at time $t$) associated to $B^{\lambda}_t-\inf_{0\le s\le t} B^{\lambda}_s$, first stated by Aldous in \cite{aldous1997brownian}. We note that $\mathbb{N}^{t,\lambda}=\mathbb{N}^{0,\lambda-t}$ and thus it suffices to write $\mathbb{N}^{0,\lambda}$ for every $\lambda\in \mathbb{R}$. For every measurable $A\in E$,
\[
\mathbb{N}^{0,\lambda}(A)=\int_{0}^{\infty} \mathbb{N}^{0,\lambda}_l(A) f_{L}(l) F_{\lambda}(l) \mathbb{N}_l\left(\exp\left(\int_{0}^{l} e(u) du\right)\right),
\]
where $\mathbb{N}_{l}^{0,\lambda}$ is a shorthand for the excursion measure $\mathbb{N}^{0,\lambda}$, conditioned on the event $\{\tilde{L}=l\}$ and $F_{\lambda}(l):=\exp\left(-1/6\left(\lambda^3+(l-\lambda)^3\right)\right)$. For simplicity, let $g_{\tilde{L}}(l,\lambda):=f_{L}(\lambda) F_{\lambda}(l) \mathbb{N}_l\left(\exp\left(\int_{0}^{l} e(u) du\right)\right)$. In analogy with \eqref{meas1} we characterize $\mathbf{N}^{t,\lambda}(d(e,\mathcal{P}))$ by setting 
\begin{align} \label{meas3}
&\mathbf{N}^{t,\lambda}(d e,|\mathcal{P}|=k, (d x_1,...,d x_k)\in A_1\times...\times A_k)
\nonumber \\ :=
&\int_{0}^{\infty} g_{\tilde{L}}(l,\lambda-t) \mathbb{N}^{t,\lambda}_l (d e) \exp\left(-\int_{0}^{l} e(u) du\right)\frac{\left(\int_{0}^{l} e(u) du\right)^k}{k!} \prod_{i=1}^{k} \frac{\ell(A_i\cap A_e)}{\ell(A_e)}.
\end{align} 
After calculations that involve the use of the Cameron-Martin-Girsanov formula \cite[Chapter IX, (1.10) Theorem]{revuz1999continuous} (for the entirety of those calculations one can consult \cite[Section 5]{addario2012continuum}), one deduces that 
\[
\mathbb{N}_l^{t,\lambda}(d e)=\exp\left(\int_{0}^{l} e(u) du\right) \frac{\mathbb{N}_l(d e)}{\mathbb{N}_l\left(\exp\left(\int_{0}^{l} e(u) du\right)\right)},
\]
and as a consequence the following expression for the Radon-Nikodym derivative is valid.
\begin{equation} \label{meas4}
\frac{d \mathbf{N}^{t,\lambda}}{d \mathbf{N}}=\frac{F_{\lambda-t}(l) \left(\int_{0}^{l} e(u) du\right)^k/k!}{e^{-1}/k!}=\exp\left(1-\frac{1}{6}\left(\lambda^3+(l-\lambda+t)^3\right)\right) \left(\int_{0}^{l} e(u) du\right)^k.
\end{equation}

The main reason to consider $\mathbf{N}$ instead of $\mathbf{N}^{t,\lambda}$ is that it can easily be seen to enjoy the same scaling property with the normalized It\^o excursion measure. Recall that for every $b>0$, the mapping $\Theta_b:E\to E$ defined by setting $\Theta_b(e)(t):=\sqrt{b} e(t/b)$, for every $e\in E$, and $t\ge 0$. As we saw in subsection \ref{unes} acts on the real tree coded by $e$ scaling its distance and invariant probability measure appropriately. Let $\mathcal{M}_{e,\mathcal{P}}$, where $e$ is a Brownian excursion of length $\zeta$ and pointset $\mathcal{P}$, that is a Poisson process on $\mathbb{R}_{+}^2$ of intensity one with respect to the Lebesgue measure independent of $e$. If $|\mathcal{P}\cap e|$ is distributed as a Poisson random variable with mean $\int_{0}^{\zeta} e(u) du$, then $|\mathcal{P}\cap \Theta_b(e)|$ has law given by a Poisson distribution with mean $b^{3/2} \int_{0}^{\zeta} e(u) du$. Moreover, conditioned on $|\mathcal{P}\cap e|$, if the coordinates of a point $(u_{(t,x)},v_{(t,x)})$ in $\mathcal{P}\cap e$ have densities proportional to $e(u)$ for $u_{(t,x)}$ and, conditioned on $u_{(t,x)}$, uniformly on $[0,e(u_{(t,x)})]$ for $v_{(t,x)}$ respectively, then conditioned on $|\mathcal{P}\cap \Theta_b(e)|$, the coordinates of a point $(u^b_{(t,x)},v^b_{(t,x)})$ in $\mathcal{P}\cap \Theta_b(e)$ has coordinates distributed as $b u_{(t,x)}$ in the case of $u^b_{(t,x)}$, and conditioned on $u_{(t,x)}$, uniform on $[0,\sqrt{b} u_{(t,x)}]$ in the case of $v^b_{(t,x)}$ respectively. From now on we use $\Theta_b(e,\mathcal{P})$ to denote the mapping from the product space of positive excursions and pointsets of the upper half plance onto itself that rescales $e$ as $\Theta_b(e)$ and repositions the collection of points in $\mathcal{P}$ as described above.

Starting with how the distance rescales under the application of $\Theta_b$, $b>0$, it is immediate from the definition of the quasi-metric $d_{\mathcal{M}_{e,\mathcal{P}}}$ that $d_{\mathcal{M}_{\Theta_b(e,\mathcal{P})}}=b^{1/2} d_{\mathcal{M}_{e,\mathcal{P}}}$. Let $\mathcal{L}(\mathcal{T}_e)$ denote the set of leaves of $\mathcal{T}_e$. Then, $\pi^e$ has full support on $\mathcal{L}(\mathcal{T}_e)$ and  $\pi^e(\mathcal{L}(\mathcal{T}_e))=\zeta(e)$. Consider the set 
\[
I=\{\sigma\in \mathcal{L}(\mathcal{T}_e): p_{e,\mathcal{P}}(\sigma)\in A\},
\]
for a measurable subset $A$ of $\mathcal{M}_{e,\mathcal{P}}$. Then, from the definition of $\pi_{e,\mathcal{P}}$, we have that $\pi_{e,\mathcal{P}}(A)=\pi^e(I)$, and consequently $\pi_{\Theta_b(e,\mathcal{P})}(A)=\pi^{\Theta_b(e)}(I)$. As we examined before $\pi^{\Theta_b(e)}(I)=b \pi^e(I)$, and from this it follows that $\pi_{\Theta_b(e,\mathcal{P})}(A)=b \pi_{e,\mathcal{P}}(A)$, for every measurable subset $A$ of $\mathcal{M}_{e,\mathcal{P}}$. Finally, since $\mathbb{N}\circ \Theta_b^{-1}=\sqrt{b} \mathbb{N}$ and using the fact that $\ell(A_i\cap A_e)/\ell(A_e)$ in \eqref{meas1} is scale invariant under $\Theta_b$, it follows that 
$\mathbf{N}\circ \Theta_b^{-1}=\sqrt{b} \mathbf{N}$, i.e. $\mathbf{N}(d(e,\mathcal{P}))$ rescales exactly like $\mathbb{N}(d e)$. 

We now have all the ingredients to prove continuity of the blanket times of the Brownian motion on $\mathcal{M}$. We briefly describe the arguments that have been already used in establishing Proposition \ref{pr2}. Let $\tau_{\text{bl}}^{e,\mathcal{P}}(\varepsilon)$ denote the $\varepsilon$-blanket time of the Brownian motion $X^{e,\mathcal{P}}$ on $\mathcal{M}_{e,\mathcal{P}}$ started from a distinguished vertex $\bar{\rho}$, for some $\varepsilon\in (0,1)$. Taking the expectation of the law of $\tau_{\text{bl}}^{e,\mathcal{P}}(\varepsilon)$, $\varepsilon\in (0,1)$ against the $\sigma$-finite measure $\mathbf{N}$ (see \eqref{sc5}), using Fubini and the monotonicity of the blanket times, yields
\[
\mathbf{P}_{\bar{\rho}}^{e,\mathcal{P}}\left(\tau_{\text{bl}}^{e,\mathcal{P}}(\varepsilon-)=\tau_{\text{bl}}^{e,\mathcal{P}}(\varepsilon)\right)=1,
\]
$\lambda$-a.e. $\varepsilon$, $\mathbf{N}$-a.e. $(e,\mathcal{P})$, where $\mathbf{P}_{\bar{\rho}}^{e,\mathcal{P}}$ denotes the law of $X^{e,\mathcal{P}}$. The rest of the argument relies on impoving such a statement to hold for every $\varepsilon\in (0,1)$ by using scaling.

In the transformed glued metric space $\mathcal{M}_{\Theta_b(e,\mathcal{P})}$ the Brownian motion admits $\mathbf{P}_{\bar{\rho}}^{e,\mathcal{P}}$-a.s. jointly continuous local times $(\sqrt{b} L_{b^{-3/2} t}(x))_{x\in \mathcal{M}_{e,\mathcal{P}},t\ge 0}$. This enough to infer that, for every $\varepsilon\in (0,1)$ and $b>1$, the continuity of the $\varepsilon$-blanket time variable of $\mathcal{M}_{e,\mathcal{P}}$ is equivalent to the continuity of the $b^{-1} \varepsilon$-blanket time variable of $\mathcal{M}_{\Theta_b(e,\mathcal{P})}$, and consequently as in the proof of Proposition \ref{pr2} applying our scaling argument implies 
\[
\mathbf{P}_{\bar{\rho}}^{e,\mathcal{P}}\left(\tau_{\text{bl}}^{e,\mathcal{P}}(\varepsilon-)=\tau_{\text{bl}}^{e,\mathcal{P}}(\varepsilon)\right)=1,
\]
$\mathbf{N}$-a.e. $(e,\mathcal{P})$. Recall that, conditional on $C_1$, $\mathcal{M}\,{\buildrel (d) \over =}\,\mathcal{M}^{(C_1)}$, where $C_1$ is the length of the longest excursion of the process defined in \eqref{parabola}, which is distributed as a tilted excursion of that length. Then, applying again our scaling argument as in the end of the proof of Proposition \ref{pr2}, conditional on $C_1$, we deduce
\[
\mathbf{P}_{\rho}^{\mathcal{M}}\left(\tau_{\text{bl}}^{\mathcal{M}}(\varepsilon-)=\tau_{\text{bl}}^{\mathcal{M}}(\varepsilon)\right)=1,
\]
$\mathbf{N}_{C_1}$-a.e. $(e,\mathcal{P})$, where $\mathbf{N}_l$ is the version of $\mathbf{N}$ defined in \eqref{meas1} conditioned on the event $\{L=l\}$. Since the canonical measure $\mathbf{N}^{0,\lambda}_{C_1}$ is absolutely continuous with respect to $\mathbf{N}_{C_1}$ as it was shown in \eqref{meas4}, the above also yields that conditional on $C_1$, $\mathbf{N}^{0,\lambda}_{C_1}$-a.e. $(e,\mathcal{P})$, $\varepsilon\mapsto \tau_{\text{bl}}^{\mathcal{M}}(\varepsilon)$ is continuous $\mathbf{P}_{\rho}^{\mathcal{M}}$-a.s. 

Given this result and Proposition \eqref{ownown} the following convergence theorem follows from Theorem \ref{Mth}. Here, for a particular real value of $\lambda$ and conditional on $C_1$,

\begin{equation} \label{lastbits}
\mathbb{P}_{\rho}(\cdot):=\int \mathbf{P}_{\rho}^{\mathcal{M}}(\cdot ) \mathbf{N}_{C_1}^{0,\lambda}(d(e,\mathcal{P})),
\end{equation}
formally defines the annealed measure for suitable events.
\begin{theorem}
Fix $\varepsilon\in (0,1)$. If $\tau_{\textnormal{bl}}^{n}(\varepsilon)$ is the $\varepsilon$-blanket time variable of the random walk on $\mathcal{C}_1^n$, started from its root $\rho^n$, then 
\[
\mathbb{P}_{\rho^n}\left(n^{-1} \tau_{\textnormal{bl}}^n(\varepsilon)\le t\right)\to \mathbb{P}_{\rho}\left(\tau_{\textnormal{bl}}^{\mathcal{M}}(\varepsilon)\le t\right),
\]
for every $t\ge 0$, where $\tau_{\textnormal{bl}}^{\mathcal{M}}(\varepsilon)\in (0,\infty)$ is the $\varepsilon$-blanket time variable of the Brownian motion on $\mathcal{M}$, started from $\rho$. 
\end{theorem}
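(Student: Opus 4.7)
The plan is to apply Corollary \ref{verification} to the sequence $(\mathcal{C}_1^n)_{n\ge 1}$ with space scaling $\alpha(n)=n^{-1/3}$ and time scaling $\beta(n)=n$, with limit object $(\mathcal{M},d_{\mathcal{M}},\pi^{\mathcal{M}},X^{\mathcal{M}})$, and then integrate out the randomness of the graphs via the annealed version of the argument used at the end of Section \ref{CGWT}. All the required ingredients have been assembled in the preceding subsections.

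First I would verify Assumption \ref{Assum3} in the annealed setting. The extended pointed Gromov-Hausdorff convergence
\[
\left(\left(V(\mathcal{C}_1^n),n^{-1/3} d_{\mathcal{C}_1^n},\rho^n\right),\pi^n,\left(n^{-1/3} X^{\mathcal{C}_1^n}_{\lfloor n t \rfloor}\right)_{t\ge 0}\right)\longrightarrow \left(\left(\mathcal{M},d_{\mathcal{M}},\rho\right),\pi^{\mathcal{M}},X^{\mathcal{M}}\right)
\]
was recalled from \cite{addario2012continuum} and \cite{croydon2012scaling}. Since $(\mathcal{C}_1^n)_{n\ge 1}$ is a sequence of finite graphs in which the shortest-path metric coincides with the resistance metric, this is precisely the first part of Assumption \ref{Assum3}. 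The equicontinuity condition \eqref{emb3} on the rescaled local times is exactly Proposition \ref{ownown}, after passing from the annealed statement to a quenched statement along a subsequence by standard measure-theoretic considerations.

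Next I would use the separability of the underlying metric space $(\mathbb{K},d_{\mathbb{K}})$ (Proposition \ref{Prop1.1}) together with Skorohod's representation theorem to couple the sequence $\mathcal{C}_1^n$ and the limit $\mathcal{M}$ on a common probability space so that the convergence holds almost surely after isometric embedding into some $(F,d_F)$, in the sense of Lemma \ref{lem2.2}. Lemma \ref{emblem} then yields the joint convergence of the rescaled local times in $C([0,T],\mathbb{R}^k)$ at any finite collection of points, which in turn (as explained at the end of Section \ref{CGWT}) allows Theorem \ref{Mth} to be applied under the annealed measure $\mathbb{P}_{\rho^n}(\cdot)=\int \mathbf{P}_{\rho^n}^{\mathcal{C}_1^n}(\cdot) P(d\mathcal{C}_1^n)$. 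This gives, for every $\varepsilon\in(0,1)$, $\delta\in(0,1)$ and $t\ge 0$,
\[
\liminf_{n\to\infty} \mathbb{P}_{\rho^n}\!\left(n^{-1}\tau_{\textnormal{bl}}^n(\varepsilon)\le t\right) \ge \mathbb{P}_{\rho}\!\left(\tau_{\textnormal{bl}}^{\mathcal{M}}(\varepsilon)< t\right),
\]
\[
\limsup_{n\to\infty} \mathbb{P}_{\rho^n}\!\left(n^{-1}\tau_{\textnormal{bl}}^n(\varepsilon)\le t\right) \le \mathbb{P}_{\rho}\!\left(\tau_{\textnormal{bl}}^{\mathcal{M}}(\varepsilon(1-\delta))\le t\right).
\]

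To close the gap between the two bounds I would invoke the continuity result obtained in Section \ref{excm}: by combining the $\lambda$-almost-sure continuity of $\varepsilon\mapsto\tau_{\textnormal{bl}}^{e,\mathcal{P}}(\varepsilon)$ under $\mathbf{N}(d(e,\mathcal{P}))$, the scaling invariance $\mathbf{N}\circ\Theta_b^{-1}=\sqrt{b}\,\mathbf{N}$, and the absolute continuity of $\mathbf{N}_{C_1}^{0,\lambda}$ with respect to $\mathbf{N}_{C_1}$ given by the Radon--Nikodym derivative \eqref{meas4}, one obtains, conditional on $C_1$, that $\varepsilon\mapsto\tau_{\textnormal{bl}}^{\mathcal{M}}(\varepsilon)$ is continuous at the fixed $\varepsilon$, $\mathbf{P}_\rho^{\mathcal{M}}$-a.s., and hence $\mathbb{P}_{\rho}$-a.s. under the annealed law \eqref{lastbits}. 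Letting $\delta\to 0$ in the two displays above and applying dominated convergence on both sides, the outer bounds collapse to $\mathbb{P}_\rho(\tau_{\textnormal{bl}}^{\mathcal{M}}(\varepsilon)\le t)$, which yields the claim. The main obstacle in this plan is the transfer of the continuity of the limiting blanket time from $\mathbf{N}$ to the physically relevant measure $\mathbf{N}^{0,\lambda}_{C_1}$; this is handled cleanly through the explicit Radon--Nikodym derivative derived via Cameron--Martin--Girsanov in Section \ref{excm}, so no further delicate work is required beyond what has been assembled.
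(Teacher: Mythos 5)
Your proposal follows essentially the same route as the paper: the annealed equicontinuity of Proposition \ref{ownown} together with the Gromov--Hausdorff-type convergence recalled from \cite{addario2012continuum} and \cite{croydon2012scaling} to feed Lemma \ref{emblem} and Theorem \ref{Mth}, then continuity of $\varepsilon\mapsto\tau_{\textnormal{bl}}^{\mathcal{M}}(\varepsilon)$ via the scaling invariance of $\mathbf{N}$ and the Radon--Nikodym derivative \eqref{meas4}, and finally dominated convergence to collapse the two bounds. The only cosmetic deviation is your remark about passing to a quenched statement along a subsequence: the paper instead keeps the entire argument under the annealed law (as spelled out at the end of Section \ref{unes} and mirrored here via \eqref{lastbits}), which is the cleaner way to exploit Proposition \ref{ownown} and does not require any quenched reduction.
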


\subsection{Critical random graph with prescribed degrees}

Let $\text{G}_{n,d}$ denote the space of all simple graphs labelled by $[n]$ such that the $i$-th vertex has degree $d_i$, $i\ge 1$, for $1\le i\le n$. We denote the vector $(d_i: i\in [n])$ of the prescribed degree sequence by $d$, where $\ell_n:=\sum_{i\in [n]} d_i$ is assumed even. Write $\bar{\text{G}}_{n,d}$ for $\text{G}_{n,d}$ with the difference that we allow self-loops as well as the occurence of multiple edges between the same pair of vertices. Then, the configuration model is the random multigraph in $\text{G}_{n,d}$ constructed as follows. Assign each vertex $i$ with $d_i$ half-edges, labelling them arbitrarily by $1,...,\ell_n$. The multigraph $\text{M}^n(d)$ produced by a uniform random pairing of the half-edges to create full edges is called the configuration model. In particular, we look at prescribed degree sequences that satisfy the following assumptions.

\begin{assum} \label{gracias}
Let $D_n$ is a random variable with distribution given by 
\[
P(D_n=i)=\frac{\# \{j: d_j=i\}}{n}.
\] 
In other words, $D_n$ has the law of the degree of a vertex chosen uniformly at random from $[n]$. Suppose that $D_n\xrightarrow{(d)} D$, for a limiting random variable $D$ such that $P(D=1)>0$. Moreover, assume the following as $n\to \infty$,
\begin{align*}
&\text{(i) Convergence of the third moments: }E(D_n^3)\to E(D^3)<\infty,
\\
&\text{(ii) Scaling critical window: }\frac{E(D_n (D_n-1))}{E(D_n)}=1+\lambda n^{-1/3}+o(n^{-1/3}), \text{ for some } \lambda\in \mathbb{R}. \text{ In particular, }\\
&E(D^2)=2 E(D). 
\end{align*}
\end{assum}

\begin{remark}
We remark here that the configuration model with random i.i.d. degrees sampled from a distribution with $E(D^3)<\infty$ treated in \cite{joseph2014component} meets the assumptions introduced above almost surely. Especially, (ii) is satisfied for $\lambda=0$ and corresponds to the critical case, i.e. if $E(D^2)<2 E(D)$ there is no giant component with probability tending to 1, as $n\to \infty$. In addition, $E(D^2)>2 E(D)$ sees the emergence of a unique giant component with probability tending to 1, as $n\to \infty$.
\end{remark}

Write $c=(c_1,c_2,c_3)\in \mathbb{R}_{+}^3$ and define $(B^{c,\lambda}_t)_{t\ge 0}$, a Brownian motion with parabolic drift by 
\begin{equation} \label{pard}
B^{c,\lambda}_t:=\frac{\sqrt{c_2}}{c_1} B_t+\lambda t-\frac{c_2 t^2}{2 c_1^3},
\end{equation}
where $(B_t)_{t\ge 0}$ is a standard Brownian motion. The most general result under minimum assumptions for the joint convergence of the component sizes and the corresponding surpluses was proven in  \cite{dhara2017critical}. Fix $\lambda\in \mathbb{R}$ and let $(M_i^n)_{i\ge 1}$ and $(R_i^n)_{i\ge 1}$ denote the sequence of the sizes and surpluses of the components of $\text{M}^n(d)$ respectively.

\begin{theorem}[\textbf{Dhara, Hofstad, Leeuwaarden, Sen \cite{dhara2017critical}}] \label{Van}
As $n\to \infty$,
\begin{equation}
\left(n^{-2/3} (M_i^n)_{i\ge 1},(R_i^n)_{i\ge 1}\right)\longrightarrow \left((M_i^{c_D})_{i\ge 1},(R_i^{c_D})_{i\ge 1}\right)
\end{equation}
in distribution, where the convergence of the first sequence takes place in $\ell^2_{\downarrow}$ and for the second in the product topology.
\end{theorem}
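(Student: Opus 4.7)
The plan is to adapt the exploration-process approach that Aldous used to derive Theorem \ref{PhT} for the critical Erd\H{o}s--R\'enyi graph to the configuration model. I would explore $\mathrm{M}^n(d)$ by processing half-edges one at a time in a depth-first manner: maintain a stack of active half-edges, and at each step either pair the top active half-edge with a fresh half-edge at a new vertex (adding that vertex's remaining half-edges to the stack, a ``tree'' step) or pair it with another already-active half-edge (a ``surplus'' step). Encode the exploration by the walk $S_n(k)$ recording the number of active half-edges after the $k$-th step; its excursions above its running minimum enumerate the connected components of $\mathrm{M}^n(d)$, and the number of surplus steps occurring within an excursion is precisely $R_i^n$ for the corresponding component.

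The first main step is a functional limit theorem for the rescaled exploration walk:
\[
\left(n^{-1/3} S_n\bigl(\lfloor n^{2/3} t\rfloor\bigr)\right)_{t\ge 0}\longrightarrow \left(B^{c_D,\lambda}_t\right)_{t\ge 0}
\]
in the Skorohod $J_1$ topology, with the triple $c_D=(c_1,c_2,c_3)$ determined by the moments of $D$. I would analyse the increments via size-biased sampling: because a vertex is discovered proportionally to its degree, the deterministic drift at step $k$ is approximately $(\mathbb{E}(D_n(D_n-1))/\mathbb{E}(D_n))-1\approx \lambda n^{-1/3}$, corrected by a term linear in the number of already-revealed half-edges (the pool depletes as $\ell_n-2k$), producing the parabolic term $-c_2 t^2/(2 c_1^3)$ on the scale $k=\lfloor n^{2/3} t\rfloor$. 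The Gaussian fluctuations follow from a martingale CLT applied to the centred increments, with the Lindeberg condition provided by $\mathbb{E}(D_n^3)\to\mathbb{E}(D^3)<\infty$ from Assumption \ref{gracias}(i).

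The hardest step is promoting convergence of the exploration walk to convergence of $(n^{-2/3} M_i^n)_{i\ge 1}$ in $\ell^2_\downarrow$. The continuous mapping theorem applied to the excursion-length functional only yields convergence in the product topology; strengthening to $\ell^2_\downarrow$ requires a uniform tail bound of the form
\[
\lim_{\varepsilon\downarrow 0}\limsup_{n\to\infty} n^{-4/3}\,\mathbb{E}\Bigl[\sum_{i\ge 1} (M_i^n)^2\,\mathbf{1}_{\{M_i^n\le \varepsilon n^{2/3}\}}\Bigr]=0.
\]
I would establish this by comparing the exploration to a near-critical size-biased Galton--Watson process with offspring mean $\mathbb{E}(D_n(D_n-1))/\mathbb{E}(D_n)$, invoking the third-moment hypothesis to bound the tails of the total progeny and summing over components via a switching-type argument tailored to the configuration model. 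For the surpluses, conditionally on the exploration walk a surplus step at time $k$ occurs with probability approximately $(S_n(k)-1)/(\ell_n-2k-1)$; standard arguments then show the point process of surplus times converges to a Cox process on $\mathbb{R}_{+}$ with intensity equal to the reflected limit $B^{c_D,\lambda}_t-\inf_{s\le t} B^{c_D,\lambda}_s$. Counting the Cox points inside each limiting excursion produces $(R_i^{c_D})_{i\ge 1}$, and joint convergence of the walk together with its superimposed marks gives the announced joint convergence; the product-topology convergence of the surplus sequence is automatic since each limiting $R_i^{c_D}$ depends only on finitely many excursions of the limit process.
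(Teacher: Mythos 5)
Note first that the paper itself does not prove this statement: Theorem \ref{Van} is an attributed result, quoted from \cite{dhara2017critical} exactly as Theorem \ref{PhT} is quoted from \cite{aldous1997brownian}, and it enters the paper only as an input (e.g.\ for the equicontinuity and convergence arguments of Section \ref{newresult}). So there is no internal proof to compare you with; your proposal can only be measured against the cited source.

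Your sketch follows the same broad strategy as \cite{dhara2017critical} (and Aldous's original argument for $G(n,p)$): encode components by an exploration walk, prove a functional CLT whose parabolic drift comes from depletion of the half-edge pool, upgrade convergence of the excursion lengths from the product topology to $\ell^2_{\downarrow}$ via a uniform second-moment tail bound on small components, and read off the surpluses as a Cox/Poisson count over excursions. As it stands, however, it is an outline rather than a proof: the two genuinely hard steps — the martingale FCLT under only a finite-third-moment hypothesis (which requires truncating large degrees and controlling the size-biased order in which vertices are discovered), and the $\ell^2$ tail estimate, which is where most of the technical work in \cite{dhara2017critical} lies — are asserted, not carried out. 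Two further points would need care. First, the cited proof explores vertex by vertex in size-biased order, whereas you pair half-edge by half-edge; with your clock an excursion counts pairing steps (essentially edges), not vertices, so recovering $M_i^n$ requires an additional law-of-large-numbers time change, and the constants $c_1^D$, $c_2^D$, $c_3^D$ must be rederived for that parametrisation — in particular the limiting surplus intensity is $c_3^D=1/E(D)$ times the reflected process, not the reflected process itself as written. Second, the passage from convergence of the walk to joint convergence of ordered excursion lengths and marks uses that the relevant excursion endpoints are almost surely continuity points and that no surplus mass escapes into small components; this is standard but should be stated, since the product-topology claim for $(R_i^n)_{i\ge1}$ is what makes it harmless here.
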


The limiting sequence $(M_i^{c_D})_{i\ge 1}$ is distributed as the ordered sequence of lengths of excursions of the process $(B_t^{c_D,\lambda})_{t\ge 0}$ reflected upon its minimum, where $c_D$ has coordinates depending only on the first three moments of $D$ and are given exactly by $c_1^D=E(D)$, $c_2^D=E(D^3) E(D)-(E(D^2))^2$ and $c_3^D=1/E(D)$. Drawing the graph of the reflected process, scattering points on the plane according to a Poisson with rate $c_3^D$ and keeping only those that fell between the $x$-axis and the function, describes $R_i^{c_D}$, $i\ge 1$ as the number of those points that fell in the excursion with length $M_i^{c_D}$. Note that Theorem \ref{PhT} follows as a corollary of Theorem \ref{Van} for the special choice of $c_{\text{ER}}=(1,1,1)$.

In Section \ref{lalala} we introduced $\mathcal{M}^{(\zeta)}$ as the random real tree coded by a tilted Brownian excursion of length $\zeta$ to which a number of point identifications to create cycles is added. The number of point identifications is a Poisson random variable with mean given by the area under the tilted excursion. Given that number, say $k\ge 0$, $x_i$ is picked with a density proportional to the height of the tilted excursion in an i.i.d. fashion for every $1\le i\le k$ and $y_i$ is picked uniformly from the path that connects the root to $x_i$. Then, $x_i$ and $y_i$ are identified, $1\le i\le k$. Let $\mathcal{M}^{(\zeta,c_3)}$ denote $\mathcal{M}^{(\zeta)}$ if the number of point identifications is instead Poisson with mean given by the area under the tilted excursion multiplied by $c_3$, i.e. $c_3 \int_{0}^{\zeta} \tilde{e}(u) du$.

Then, the limit of the largest connected component of the configuration on the scaling critical window, say $\text{M}_1^n(d)$, can be written as a scalar of $\mathcal{M}^{(M_1^{c_D},c_3^D)}$, where $M_1^{c_D}$ is distributed according to the length of the longest excursion of the reflected upon its minimum parabolic Brownian motion with coefficients dependent on $c_D$ as defined in \eqref{pard}. This statement is made precise as a simplified version of \cite[Theorem 2.4]{bhamidi2016geometry}, which we quote. As $n\to \infty$, 
\begin{equation} \label{uni1}
\left(n^{-2/3} M_1^n,\left(V(\text{M}^n_1(d)),n^{-1/3} d_{\text{M}_1^n(d)},\rho^n\right)\right)\longrightarrow \left(M_1^{c_D},\left(\mathcal{M}_D,\frac{c_1^D}{\sqrt{c_2^D}}d_{\mathcal{M}_D},\rho\right)\right),
\end{equation}
in distribution, where conditional on $M_1^{c_D}$, $\mathcal{M}_D\,{\buildrel (d) \over =}\,\mathcal{M}^{(M_1^{c_D},c_3^D)}$. Actually \cite[Theorem 2.4]{bhamidi2016geometry} holds also by considering the largest connected component of a uniform element of $G_{n,d}$ with $d$ satisfying the minimum assumptions in (i) and (ii). Again, the limit of the maximal components of $G(n,n^{-1}+\lambda n^{-4/3})$ can be recover by considering $D_{\text{ER}}$ to be a mean 1 Poisson random variable ($c_1^{D_{\text{ER}}}=c_2^{D_{\text{ER}}}=c_3^{D_{\text{ER}}}=1$) and the fact that $G(n,n^{-1}+\lambda n^{-4/3})$ conditioned on the degree sequence being $d$ is uniformly distributed over $G_{n,d}$. The result in \eqref{uni1} is to be regarded as a stepping stone to a general program that seeks to prove universality for the metric structure of the maximal components at criticality for a number of random graphs models with their distances scaling like $n^{1/3}$. For more details see \cite{bhamidi2014scaling}. 

We turn now our interest on how to sample uniformly a connected component with a given degree sequence $(\tilde{d}_i: i\in [\tilde{m}])$ that satisfies the following assumption. 
\begin{assum} \label{assum4}
(i) Let $\tilde{d}_1=1$, and $\tilde{d}_i\ge 1$, for every $1\le i\le \tilde{m}$.
\\
(ii) There exists a probability mass function $(\tilde{p}_i)_{i\ge 1}$ with the properties 
\[
\tilde{p}_1>0, \qquad \sum_{i\ge 1} i \tilde{p}_i=2, \qquad \sum_{i\ge 1} i^2 \tilde{p}_i<\infty
\]
such that 
\[
\frac{1}{\tilde{m}} \# \{j: \tilde{d}_j=i\}\to \tilde{p}_i, \text{ for all } i\ge 1, \text{ and } \qquad \frac{1}{\tilde{m}} \sum_{i\ge 1} \tilde{d}_i^2\to \sum_{i\ge 1} i^2 \tilde{p}_i.
\]
In particular, $\max_{1\le 1\le \tilde{m}} \tilde{d}_i=o(\sqrt{m})$.
\end{assum}

For a given rooted plane tree $\theta$ with root $\rho$, let $\text{c}(\theta)=(\text{c}_v(\theta))_{v\in \theta}$, where $\text{c}_v(\theta)$ gives the number of children of $v$ in $\theta$ and let $\text{s}(\theta)=(\text{s}_i(\theta))_{i\ge 0}$ be the empirical children distribution (ECD) of $\theta$, i.e. $\text{s}_i(\theta):=\#  \{v\in \theta: c_v(\theta)=i\}$, for every $i\ge 0$. Note that $\text{s}_0(\theta)=\# \mathcal{L}(\theta)$ gives exactly the number of leaves of $\theta$. Now, given a sequence of integers $\text{s}=(\text{s}_i)_{i\ge 0}$, it is easy to check that it is tenable for a tree $\theta$ if and only if $\text{s}_0\ge 1$, $\text{s}_i\ge 0$ for every $i\ge 1$, and 
\[
\sum_{i\ge 0} \text{s}_i=1+\sum_{i\ge 1} i \text{s}_i<\infty.
\]
Given $\text{s}$, let $\mathbf{T}_{\text{s}}$ denote the collection of all plane trees having ECD $\text{s}$. 

Let $x,y\in \mathcal{L}({\theta})$. We say that the ordered pair of leaves $(x,y)$ is admissible if $\text{par}(x)<_{\text{DF}} \text{par}(y)$, i.e. if the parent of $x$ is explored before the parent of $y$ during a depth-first search of $\theta$, and if $\text{gpar}(y)\in [[\rho,\text{gpar}(x)]]$, i.e. if the grandparent of $y$ belongs to ancestral line that connects the root to the grandparent of $x$. Let $(\mathbf{A}(\theta),<<)$ denote the collection of pairs of admissible leaves of $\theta$ endowed with the linear order << that declares $(x_1,y_1)<<(x_2,y_2)$ if and only if $x_1=x_2$ or $x_1<_{\text{DF}} x_2$ and $y_1<_{\text{DF}} y_2$. For $k\ge 1$, we denote by $\mathbf{A}_k(\theta)$ the collection of admissible $k$-tuples of $2 k$ distinct leaves and by $\mathbf{T}_{\text{s}}^k$ the pairs $(\theta,\text{z})$ for which $\theta\in \mathbf{T}_{\text{s}}$ and $\text{z}\in \mathbf{A}_k(\theta)$. Finally, for a rooted plane tree $\theta$ and $\text{z}=\{(x_1,y_1),...,(x_k,y_k)\}\in \mathbf{A}_k(\theta)$, we denote by $L(\theta,\text{z})$ the rooted plane tree obtained from $\theta$ performing the following operation. Delete $x_i$, $y_i$ together with the edges adjacent to them for each $i=1,...,k$ and add an edge between $\text{par}(x_i)$ and $\text{par}(y_i)$. We equip $L(\theta,\text{z})$ with the shortest path distance and the uniform probability measure on its set of vertices. Finally, let 

We will work with connected graphs with a fixed surplus, so suppose that 
\[
\sum_{i\in [\tilde{m}]} \tilde{d}_i=2(\tilde{m}-1)+2 k,
\]
for some fixed $k\ge 0$. Under Assumption \ref{assum4}, the lowest labelled vertex has one descendant and for the remaining vertices $2,...,\tilde{m}$ we form the children sequence $\text{c}:=(\text{c}_i)_{i=2}^{\tilde{m}+2 k}$ via $\text{c}_i:=\tilde{d}_i-1$, for every $2\le i\le \tilde{m}$, and $\text{c}_{\tilde{m}+j}:=0$, for every $1\le j\le 2 k$. Observe that 
\begin{equation} \label{childish}
\sum_{i=2}^{\tilde{m}+2 k} c_i=\sum_{i=2}^{\tilde{m}} \tilde{d}_i-(\tilde{m}-1)=(\tilde{m}-1+2 k)-1,
\end{equation}
and thus $\text{c}$ can be seen as representing a children sequence for a plane tree with $m:=\tilde{m}-1+2 k$ vertices. Write $\text{s}:=(\text{s}_i)_{i\ge 0}$ for its ECD. The following lemma is due to Bhamidi and Sen.

\begin{lemma} [\textbf{Bhamidi, Sen \cite{bhamidi2016geometry}}] \label{assf}
To generate uniformly a connected graph with prescibed degree sequence $\tilde{d}$ satisfying Assumption \ref{assum4}
\begin{enumerate}
\item Generate first $(\tilde{T}_{\textnormal{s}},\tilde{Z})$ uniformly from $\mathbf{T}_{\textnormal{s}}^k$. If $\tilde{Z}=\{(x_1,y_1),...,(x_k,y_k)\}$ with $(x_1,y_1)<<...<<(x_k,y_k)$, label $x_i$ as $\tilde{m}+2 i-1$ and $y_i$ as $\tilde{m}+2 i$, $1\le i\le k$. Label the rest of the $\tilde{m}-1$ vertices uniformly using the remaining labels $2,...,\tilde{m}$ so that in the resulting labelled tree the vertex $j$ has exactly $\tilde{d}_j-1$ children. Call this labelled tree $\tilde{T}_{\textnormal{s}}^{\textnormal{lb}}$.
\item Construct $L(\tilde{T}_{\textnormal{s}}^{\textnormal{lb}},\tilde{Z})$, attach a vertex labelled 1 to the root and forget about the planar order and the root. Call $\mathcal{G}$ the resulting graph.
\end{enumerate}
Then, $\mathcal{G}$ is distributed uniformly over the set of connected graph with prescribed degree sequence $\tilde{d}$.
\end{lemma}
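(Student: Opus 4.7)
The plan is a counting and bijection argument: I would show that the map sending the random pair $(\tilde{T}_{\text{s}}^{\text{lb}}, \tilde{Z})$ produced in step (1) to the graph $\mathcal{G}$ of step (2) is uniformly many-to-one onto the set $\mathcal{G}_{\tilde{d}}^k$ of connected simple graphs on $[\tilde{m}]$ with degree sequence $\tilde{d}$ and surplus $k$, with a multiplicity depending only on $\text{s}$ and $k$.

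First, I would verify that the output lies in $\mathcal{G}_{\tilde{d}}^k$. By \eqref{childish}, $\tilde{T}_{\text{s}}^{\text{lb}}$ has $\tilde{m} + 2k - 1$ vertices and $\tilde{m} + 2k - 2$ edges. The $L$-operation removes the $2k$ leaves $x_1, y_1, \ldots, x_k, y_k$ together with their incident edges and then adds $k$ new edges between their parents, so $L(\tilde{T}_{\text{s}}^{\text{lb}}, \tilde{Z})$ has $\tilde{m} - 1$ vertices and $\tilde{m} - 2 + k$ edges, and connectivity is preserved because each $L$-edge shortcuts what was a path through two deleted leaves. Attaching vertex $1$ as a leaf at the old root yields a connected graph with $\tilde{m}$ vertices and $\tilde{m} - 1 + k$ edges, which matches $\sum_i \tilde{d}_i = 2(\tilde{m} - 1) + 2k$. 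For the degree check, any vertex $j \in \{2, \ldots, \tilde{m}\}$ has $1$ parent-edge and $\tilde{d}_j - 1$ children in $\tilde{T}_{\text{s}}^{\text{lb}}$, and each child-edge directed at an $x_i$ or $y_i$ is replaced by the corresponding $L$-edge; the root's missing parent-edge is compensated by the new edge to vertex $1$.

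For the uniformity claim I would construct an inverse enumeration. Given $G \in \mathcal{G}_{\tilde{d}}^k$, the label $1$ pinpoints the root $\rho$ (as the unique neighbour of the degree-$1$ vertex $1$), and $G' := G - \{1\}$ is a connected graph on $\{2, \ldots, \tilde{m}\}$ rooted at $\rho$ of surplus $k$. To count preimages, I would fix a spanning tree $T'$ of $G'$, mark the remaining $k$ edges as surplus edges, and blow each surplus edge $\{a, b\}$ up into a pair of leaves dangling from $a$ and $b$, recovering a plane tree with ECD $\text{s}$. The admissibility condition on $\tilde{Z}$ is the key device: for any planar embedding of $T'$, the depth-first order $<_{\text{DF}}$ together with the ancestry clause $\text{gpar}(y_i) \in [[\rho, \text{gpar}(x_i)]]$ is designed so that, for a fixed unordered collection of $k$ surplus edges, there is exactly one admissible orientation and $<<$-ordering that reconstructs them. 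Summing over planar embeddings of $T'$ (each interior vertex $v$ contributing $(\tilde{d}_v - 1)!$ independent orderings of its children) and combining with the uniform labeling in step (1), one obtains a count of preimages that depends only on $\text{s}$ and $k$ and not on the particular $G$, yielding the uniform distribution on $\mathcal{G}_{\tilde{d}}^k$.

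The main obstacle will be establishing the admissibility bijection precisely: one must verify that for a given planar tree $T'$ and an unordered set of $k$ marked surplus edges, there is a unique way of writing them as an admissible $k$-tuple $\{(x_1, y_1), \ldots, (x_k, y_k)\} \in \mathbf{A}_k$. I expect this to be handled by induction on $k$, peeling off one pair at a time by selecting the $<<$-minimal admissible pair, and using a short case analysis on the relative DF-positions of the two endpoints of each surplus edge to show the remaining $k - 1$ marked edges still admit a unique admissible expansion. Once this bijective lemma is secured, the rest of the argument is bookkeeping of factorials and spanning-tree counts.
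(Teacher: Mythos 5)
The paper itself offers no proof of this lemma: it is imported verbatim from Bhamidi and Sen \cite{bhamidi2016geometry}, so there is nothing internal to compare your argument against, and your proposal has to stand on its own.

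On its own terms, the easy half of your argument is fine: the bookkeeping showing that $L(\tilde{T}_{\textnormal{s}}^{\textnormal{lb}},\tilde{Z})$ with the pendant vertex $1$ attached is a connected graph on $[\tilde m]$ in which vertex $j$ has degree $\tilde d_j$ is correct. The gap is in the central enumeration. You assert that for \emph{every} spanning tree $T'$ of $G'$ and \emph{every} planar embedding there is exactly one admissible orientation and $<<$-ordering reconstructing the $k$ surplus edges. This is not proven and cannot be right as stated: admissibility requires $\textnormal{gpar}(y_i)\in[[\rho,\textnormal{gpar}(x_i)]]$, a genuine ancestry constraint which fails in both orientations when a surplus edge is a ``cross edge'' relative to the chosen spanning tree and embedding (and is already delicate for surplus edges meeting the root, where the grandparent is not defined), so many (spanning tree, embedding) pairs contribute no admissible reconstruction at all. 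Worse, if your claim were literally true, the number of preimages of a fixed labelled graph $G$ would be $\tau(G')\prod_j(\tilde d_j-1)!$, with $\tau(G')$ the number of spanning trees of $G'$; this is not determined by the degree sequence (two theta graphs with arm lengths $(1,2,4)$ and $(1,3,3)$ share the degree sequence $(3,3,2,2,2,2)$ but have $14$ and $15$ spanning trees), so your intermediate claim is actually inconsistent with the uniformity you want to conclude. The whole content of the lemma is that the admissibility conditions prune the count down to a graph-independent number, and the known proof achieves this by a bijection built from the depth-first exploration of the connected graph (equivalently of the half-edge pairing), under which every surplus edge is discovered in exactly the admissible pattern; the preimages are then enumerated exploration by exploration, not spanning tree by spanning tree. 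That bijection, or a substitute for it, is the missing idea, and the ``peel off the $<<$-minimal pair'' induction you sketch does not supply it.
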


We use $\rho^m$ to denote the root of $\tilde{T}_{\text{s}}$. We denote by  $\tilde{V}_m^{\text{s}}=(\tilde{V}_m^{\text{s}}(i): 0\le i\le 2 m)$ the contour process of $\tilde{T}_{\text{s}}$, the random tree generated according to 1 in the statement of Lemma \ref{assf}, and by $\tilde{v}_m^{\text{s}}=(m^{-1/2} \tilde{V}_m^{\text{s}}(2 m s): 0\le s\le 1)$ the rescaled contour process as well. In the next lemma we show that, for some $\alpha>0$, the sequence $||\tilde{v}_m^{\text{s}}||_{H_{\alpha}}$ of H\"older norms is tight.

\begin{lemma} \label{highon}
There exists $\alpha\in (0,1/2)$ such that for every $\varepsilon>0$ there exists a finite real number $M_{\varepsilon}$ such that 
\begin{equation} 
P\left(\sup_{t\in [0,1]}\frac{|\tilde{v}_m^{\textnormal{s}}(s)-\tilde{v}_m^{\textnormal{s}}(t)|}{|t-s|^{\alpha}}\le M_{\varepsilon}\right)\ge 1-\varepsilon.
\end{equation}
\end{lemma}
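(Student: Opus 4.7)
The plan is to mirror the proof of Lemma \ref{tiltcont1}. First, I would describe $\tilde{T}_{\text{s}}$ as a biased version of the uniform random plane tree $T_{\text{s}}$ with empirical children distribution (ECD) $\text{s}$. Since by step 1 of Lemma \ref{assf} the pair $(\tilde{T}_{\text{s}},\tilde{Z})$ is uniformly distributed on $\mathbf{T}_{\text{s}}^k$, marginalizing $\tilde{Z}$ out shows that the law of $\tilde{T}_{\text{s}}$ is absolutely continuous with respect to that of $T_{\text{s}}$, with Radon--Nikodym derivative proportional to the number $|\mathbf{A}_k(T_{\text{s}})|$ of admissible $k$-tuples of leaves. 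An application of the Cauchy--Schwarz inequality, performed exactly as in \eqref{taram} but with the bias $(1-p)^{-a(T_m)}$ replaced by $|\mathbf{A}_k(T_{\text{s}})|$, then reduces the statement of the lemma to two separate tasks: establishing H\"older tightness of the rescaled contour process $v_m^{\text{s}}$ of $T_{\text{s}}$ at some exponent $\alpha\in (0,1/2)$, and bounding the ratio $\sqrt{E[|\mathbf{A}_k(T_{\text{s}})|^2]}/E[|\mathbf{A}_k(T_{\text{s}})|]$ by a finite constant uniformly in $m$.

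For the first task, we can view $T_{\text{s}}$ as a critical Galton--Watson tree with offspring distribution $(s_i/m)_{i\ge 0}$ conditioned on the event that its ECD is exactly $\text{s}$. Under Assumption \ref{assum4}, $(s_i/m)_{i\ge 0}$ converges to a critical probability mass function with finite variance, and the uniform H\"older tightness of $v_m^{\text{s}}$ then follows from Kolmogorov's continuity criterion applied to uniform moment estimates on the height increments, along the lines sketched in the remark after Theorem \ref{prepar2} (building on the work of Broutin--Marckert and Marzouk on uniform random trees with prescribed degrees).

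For the second task, which I expect to be the main obstacle, the goal is to show that both $E[|\mathbf{A}_k(T_{\text{s}})|]$ and $\sqrt{E[|\mathbf{A}_k(T_{\text{s}})|^2]}$ are of order $m^k$. The leading-order behaviour of $E[|\mathbf{A}_k(T_{\text{s}})|]$ comes from ordered $k$-tuples of leaves whose parents and grandparents lie in generic positions of the tree; since by Assumption \ref{assum4} there are $\Theta(m)$ leaves and the tree has typical height of order $\sqrt{m}$, one expects roughly $m^k$ contributing $k$-tuples. A symmetric second moment argument, enumerating pairs of admissible $k$-tuples according to the overlap of the ancestral lines of their grandparents, should deliver the matching upper bound on $E[|\mathbf{A}_k(T_{\text{s}})|^2]$. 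Such estimates are in fact implicit in the proof of the scaling limit \eqref{uni1} in \cite{bhamidi2016geometry}, and combining them with the H\"older tightness of $v_m^{\text{s}}$ completes the proof.
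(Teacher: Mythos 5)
Your proposal follows essentially the same skeleton as the paper: identify the law of $\tilde{T}_{\textnormal{s}}$ as that of the uniform tree $T_{\textnormal{s}}$ biased by $|\mathbf{A}_k(T_{\textnormal{s}})|$, apply Cauchy--Schwarz exactly as in \eqref{taram}, and then control (i) the H\"older norm of the unbiased contour $v_m^{\textnormal{s}}$ and (ii) the ratio of the second to the squared first moment of the bias. The differences are in how the two ingredients are handled. For (ii), the paper does not redo any enumeration: it normalises $|\mathbf{A}_k(T_{\textnormal{s}})|$ by $\textnormal{s}_0^k m^{k/2}$ and imports \cite[Lemma 6.3(ii),(iii),(v)]{bhamidi2016geometry}, which give a uniform bound on the second moment of the normalised quantity and convergence of its first moment to the strictly positive limit $\left(\tfrac{p_0\sigma}{2}\right)^k E\left[\left(\int_0^1 2e(u)\,du\right)^k\right]$; your plan to rederive these by a direct second-moment count is viable in principle but heavier, and note that your stated order $m^k$ is off: the correct scale is $\textnormal{s}_0^k m^{k/2}\asymp m^{3k/2}$, as your own heuristic ($\Theta(m)$ leaves times height $\Theta(\sqrt{m})$ per admissible pair) in fact indicates. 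This slip is harmless for the strategy, since only the ratio $\sqrt{E[|\mathbf{A}_k|^2]}/E[|\mathbf{A}_k|]$ enters, but it would matter if you carried out the count. For (i), you invoke Kolmogorov-type moment estimates for uniform trees with prescribed degrees (Broutin--Marckert/Marzouk), whereas the paper identifies the uniform law on $\mathbf{T}_{\textnormal{s}}$ with a critical finite-variance Galton--Watson tree conditioned on its empirical children distribution and then appeals to Theorem \ref{prepar2}; your route is self-contained with respect to the prescribed-degree setting and arguably more robust (it avoids passing from conditioning on total size to conditioning on the exact ECD), at the cost of leaning on external increment estimates whose H\"older-norm form you would still need to verify.
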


\begin{proof}
From the definition of $(\tilde{T}_{\text{s}},\tilde{Z})$, it is clear that 
\[
P(\tilde{T}_{\text{s}}=\theta)=\frac{|\mathbf{A}_k(\theta)|}{|\mathbf{T}_{\text{s}}^k|},
\]
for any $\theta\in \mathbf{T}_{\text{s}}$, i.e. $\tilde{T}_{\text{s}}$ is a random tree that has ``tilted'' distribution which is biased in favor of trees with large collection of admissible $k$-tuples between $2 k$ distinct leaves. Hence for any $f: C([0,1],\mathbb{R}_{+})\to \mathbb{R}_{+}$ bounded and continuous function,
\begin{equation} \label{tiltme}
E\left(f\left(\tilde{v}_{m}^{\text{s}}\right)\right)=\frac{E\left(f\left(v_m^{\text{s}}\right)|\mathbf{A}_k(T_{\text{s}})|\right)}{E\left(|\mathbf{A}_k(T_{\text{s}})|\right)},
\end{equation}
where $T_{\text{s}}$ a uniform plane tree having ECD $\text{s}$, which is specified by the children sequence described in \ref{childish}. Here, $v_m^{\text{s}}$ is the normalized contour function that encodes $T_{\text{s}}$. Note that when $\tilde{d}$ satisfies Assumption \ref{assum4}, $\text{s}$ satisfies the following. 
\[
\sum_{i\ge 0} \text{s}_i=m, \qquad \frac{\text{s}_i}{m}\to p_i, \text{ for all } i\ge 0,\text{ and } \qquad \frac{1}{m} \sum_{i\ge 0}i^2 \text{s}_i\to \sum_{i\ge 0} i^2 p_i.
\]
In particular, $\max \{i: \text{s}_i\neq 0\}=o(\sqrt{m})$. Also, $p:=(p_i)_{i\ge 0}$ is a probability mass function with $p_i:=\tilde{p}_{i+1}$, for every $i\ge 0$, and therefore it satisfies the properties
\begin{equation} \label{sleeping}
p_0>0, \qquad \sum_{i\ge 0} i p_i=1, \qquad \sum_{i\ge 0} i^2 p_i<\infty.
\end{equation} 
Now, by \eqref{tiltme} along with the Cauchy-Schwarz inequality, if $K_{\varepsilon}$ is the finite real number for which \eqref{proof2} holds, we deduce
\begin{align} \label{sleep}
P\left(\sup_{s,t\in [0,1]}\frac{|\tilde{v}_m^{\text{s}}(s)-\tilde{v}_m^{\text{s}}(t)|}{|t-s|^{\alpha}}\ge K_{\varepsilon}\right)&=\frac{E\left[\one_{\left\{\sup_{t\in [0,1]}\frac{|v_m^{\text{s}}(s)-v_m^{\text{s}}(t)|}{|t-s|^{\alpha}}\ge K_{\varepsilon}\right\}} |\mathbf{A}_k(T_{\text{s}})|\right]}{E\left(|\mathbf{A}_k(T_{\text{s}})|\right)}\nonumber \\
&\le \frac{P\left(\sup_{s,t\in [0,1]}\frac{|v_m^{\text{s}}(s)-v_m^{\text{s}}(t)|}{|t-s|^{\alpha}}\ge K_{\varepsilon}\right)^{1/2} \left(E\left[\left(\frac{|\mathbf{A}_k(T_{\text{s}})|}{\text{s}_0^{k} m^{k/2}}\right)^{2}\right]\right)^{1/2} }{E\left[\frac{|\mathbf{A}_k(T_{\text{s}})|}{\text{s}_0^{k} m^{k/2}}\right]}.
\end{align}
Using \cite[Lemma 6.3(ii)]{bhamidi2016geometry}, we have that 
\[
\sup_{m} E\left[\left(\frac{|\mathbf{A}_k(T_{\text{s}})|}{\text{s}_0^k m^{k/2}}\right)^{2}\right]\le \frac{1}{k!} \sup_{m} E\left[\left(\frac{|\mathbf{A}(T_{\text{s}})|}{\text{s}_0 \sqrt{m}}\right)^{2 k}\right]<\infty,
\]
for every $k\ge 1$. Furthermore, using \cite[Lemma 6.3(iii)]{bhamidi2016geometry}, and \cite[Lemma 6.3(v)]{bhamidi2016geometry} together with the uniform integrability from above, we conclude that
\[
E\left[\frac{|\mathbf{A}_k(T_{\text{s}})|}{\text{s}_0^{k} m^{k/2}}\right]\to \left(\frac{p_0 \sigma}{2}\right)^k E\left[\left(\int_{0}^{1} 2 e(u) du\right)^k\right]>0,
\]
as $m\to \infty$, where $(e(t))_{t\in [0,1]}$ is a normalized Brownian excursion and $\sigma^2=\sum_{i\ge 0} i^2 p_i-1$. It only remains to deal with the quantity $P\left(||v_m^{\text{s}}||_{H_{\alpha}}\ge K_{\varepsilon}\right)$ that appears on the right-hand side of \eqref{sleep}. It turns out that plane trees chosen uniformly from $\mathbf{T}_{\text{s}}$ are related to G-W trees by a simple conditioning. The uniform distribution on $\mathbf{T}_{\text{s}}$ coincides with the distribution of a G-W tree $\theta$ with offspring distribution $\mu:=(\mu_i)_{i\ge 0}$, which must satisfy $\mu_i>0$ if $\text{s}_i>0$, conditioned on the event $\cap_{i\ge 0} \{\text{s}_i(\theta)=\text{s}_i\}$. Take $\mu=p$ as in \eqref{sleeping} to be the critical offspring distribution with finite variance of a G-W tree $\theta$. Then, if $P_p$ is the probability distribution of $\theta$,
\[
P\left(||v_m^{\text{s}}||_{H_{\alpha}}\ge K_{\varepsilon}\right)=P_p\left(||v_m||_{H_{\alpha}}\ge K_{\varepsilon}|\text{s}_i(\theta)=\text{s}_i, i\ge 0\right),
\]
where $||v_m||_{H_{\alpha}}$ denotes the  $\alpha$-H\"older norm of the normalized contour function $v_m$ that encodes $\theta$. The proof is completed as a result of Theorem \ref{prepar2}.

\end{proof}

In Lemma \ref{assf} we saw that $\mathcal{G}$, a uniformly chosen connected graph with prescribed degree sequence $\tilde{d}$ that satisfies Assumption \ref{assum4} is distributed as $L(\tilde{T}_{\text{s}}^{\text{lb}},\tilde{Z})$, where $(\tilde{T}_{\text{s}}^{\text{lb}},\tilde{Z})$ is a uniform labelled element of $\mathbf{T}_{\text{s}}^k$. Recall that to obtain $L(\tilde{T}_{\text{s}}^{\text{lb}},\tilde{Z})$ from $(\tilde{T}_{\text{s}}^{\text{lb}},\tilde{Z})$, where $\tilde{Z}=\{(x_i,y_i),...,(x_k,y_k)\}$ with $(x_1,y_1)<<...<<(x_k,y_k)$, for every pair of admissible leaves we add an edge between $\text{par}(x_i)$ and $\text{par}(y_i)$, and delete $x_i$ and $y_i$ and the two edges incident to them for $1\le i\le k$. The resistance on $L(\tilde{T}_{\text{s}}^{\text{lb}},\tilde{Z})$ between two vertices is smaller than the total length of the path between them on $\tilde{T}_{\text{s}}^{\text{lb}}$. This observation together with Lemma \ref{highon} is enough to establish equicontinuity of the rescaled local times $(L^{\mathcal{G}}_t(x))_{x\in V(\mathcal{G}),t\ge 0}$ of the simple random walk on $\mathcal{G}$ under the annealed law (which is defined similar to \eqref{exemplary}). Since the proof relies heavily on arguments that are present in the proof of Proposition \ref{own} we omit it.

\begin{lemma} \label{sizeb3}
For every $\varepsilon>0$ and $T>0$,
\[
\lim_{\delta\rightarrow 0} \limsup_{m\to \infty} \mathbb{P}_{\rho^m} \left(\sup_{\substack{y,z\in V(\mathcal{G}): \\ m^{-1/2} R_{\mathcal{G}}(y,z)<\delta}} \sup_{t\in [0,T]} m^{-1/2} |L_{m^{3/2} t}^{\mathcal{G}}(y)-L_{m^{3/2} t}^{\mathcal{G}}(z)|\ge \varepsilon\right)=0.
\]
\end{lemma}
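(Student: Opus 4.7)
The plan is to follow the same strategy used to prove Proposition \ref{own} (for the CRT) and Lemma \ref{tiltcomp} (for the Erd\H{o}s-R\'enyi case), exploiting the fact that Lemma \ref{assf} lets us generate $\mathcal{G}$ as $L(\tilde{T}_{\text{s}}^{\text{lb}},\tilde{Z})$, i.e.\ as a tilted uniform plane tree with prescribed ECD on which $k$ pairs of admissible leaves have been identified with their parents. Since shorting edges can only decrease effective resistance, for any $y,z\in V(\mathcal{G})$ we have $R_{\mathcal{G}}(y,z)\le d_{\tilde{T}_{\text{s}}^{\text{lb}}}(y,z)$, where the right-hand side is simply the length of the unique tree path between $y$ and $z$ in $\tilde{T}_{\text{s}}^{\text{lb}}$. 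Moreover, the total mass of $\mathcal{G}$ is $m(\mathcal{G})=\sum_{i\in[\tilde{m}]}\tilde{d}_i=2(\tilde{m}-1)+2k$, which is of order $m$.

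First, I would condition on the event that the normalized contour function $\tilde{v}_m^{\text{s}}$ of $\tilde{T}_{\text{s}}$ satisfies an $\alpha$-H\"older bound
\[
\tilde{v}_m^{\text{s}}(s)+\tilde{v}_m^{\text{s}}(t)-2\min_{r\in[s\wedge t,s\vee t]}\tilde{v}_m^{\text{s}}(r)\le C|t-s|^{\alpha}, \qquad \forall s,t\in[0,1],
\]
for some $\alpha\in(0,1/2)$ and $C>0$, which by Lemma \ref{highon} holds with probability arbitrarily close to $1$ uniformly in $m$. Under this event, if $y=p_{\tilde{v}_m^{\text{s}}}(t_1)$ and $z=p_{\tilde{v}_m^{\text{s}}}(t_2)$ then the tree-path length between them is bounded by $C\,m^{1/2}|t_1-t_2|^{\alpha}$, and therefore $R_{\mathcal{G}}(y,z)\le C\,m^{1/2}|t_1-t_2|^{\alpha}$ and the resistance diameter $r(\mathcal{G})$ is $O(m^{1/2})$.

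Next, I would invoke Theorem \ref{prepar1}, whose proof in \cite{croydon2015moduli} is written in a sufficiently general framework that applies to any finite connected graph with a resistance metric — in particular to $\mathcal{G}$. This yields, for $y,z\in V(\mathcal{G})$,
\[
\mathbf{P}_{\rho^m}^{\mathcal{G}}\!\left(r(\mathcal{G})^{-1}\sup_{t\in[0,T]}\big|L^{\mathcal{G}}_{r(\mathcal{G})m(\mathcal{G})t}(y)-L^{\mathcal{G}}_{r(\mathcal{G})m(\mathcal{G})t}(z)\big|\ge \lambda\sqrt{\tilde{d}_{\mathcal{G}}(y,z)}\right)\le c_1 e^{-c_2\lambda},
\]
with $\tilde{d}_{\mathcal{G}}(y,z):=R_{\mathcal{G}}(y,z)/r(\mathcal{G})$. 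Integrating the exponential tail gives, for any $q>0$, a moment bound of the form
\[
\mathbf{E}_{\rho^m}^{\mathcal{G}}\!\left[\sup_{t\in[0,T]}m^{-1/2}\big|L^{\mathcal{G}}_{m^{3/2}t}(y)-L^{\mathcal{G}}_{m^{3/2}t}(z)\big|^{q}\,\Big|\,\text{H\"older event}\right]\le c_3\,|t_1-t_2|^{\alpha q/2},
\]
after substituting $r(\mathcal{G})\le c\,m^{1/2}$, $m(\mathcal{G})\asymp m$, and the path-length bound above, and allowing constants to depend on $T$ and $q$.

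Finally, I would pick $q$ large enough that $\alpha q/2>1$, so that the moment condition (13.14) of \cite[Theorem 13.5]{billingsley2013convergence} applies and gives tightness of the sequence of rescaled local time processes $(m^{-1/2}L^{\mathcal{G}}_{m^{3/2}t}(2m\cdot))_{t\in[0,T]}$ in $C([0,1],\mathbb{R})$ under the annealed law conditioned on the H\"older event. Tightness is exactly the equicontinuity statement we want. To remove the conditioning, note
\[
\mathbb{P}_{\rho^m}(A)\le \mathbb{P}_{\rho^m}\!\left(A\,\big|\,\|\tilde{v}_m^{\text{s}}\|_{H_{\alpha}}\le C\right)+P\!\left(\|\tilde{v}_m^{\text{s}}\|_{H_{\alpha}}> C\right),
\]
and the second probability is arbitrarily small by Lemma \ref{highon}. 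The only delicate point — essentially the same technical obstacle as in the Erd\H{o}s-R\'enyi case — is to justify that Theorem \ref{prepar1}, stated in the excerpt for $\mathcal{T}_n$, applies verbatim to $\mathcal{G}$; this is where one must quote the general resistance-form version of the concentration estimate from \cite{croydon2015moduli}, which does not use any tree structure and only relies on $\mathcal{G}$ being a finite connected graph equipped with its resistance metric.
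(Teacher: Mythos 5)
Your proposal is correct and follows essentially the same route the paper intends: the paper omits the proof of Lemma \ref{sizeb3} precisely because it is the combination you describe — bounding $R_{\mathcal{G}}$ by the tree-path length in $\tilde{T}_{\text{s}}^{\text{lb}}$, conditioning on the H\"older event supplied by Lemma \ref{highon}, applying the concentration estimate of Theorem \ref{prepar1} (valid for general finite connected graphs with the resistance metric, as already exploited in Lemma \ref{tiltcomp}), and concluding tightness via the moment criterion of \cite[Theorem 13.5]{billingsley2013convergence} before removing the conditioning. Your handling of the volume $m(\mathcal{G})\asymp m$ and the exponent choice $\alpha q/2>1$ matches the paper's treatment in Proposition \ref{own} and Lemma \ref{tiltcomp}.
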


Assume that $d$ satisfies Assumption \ref{gracias} with limiting random variable $D$, and let $D^*$ denote its size-biased distribution given by 
\[
p^{*}_{i}:=P(D^*=i)=\frac{i P(D=i)}{E(D)}, \qquad i\ge 1.
\]
Then, for $M_1^n(d)$, the largest connected component of the configuration model $M^n(d)$, 
\begin{equation} \label{sizeb1}
\frac{\# \{v\in M_1^n(d): d_v=i\}}{|V(M_1^n(d))|}\xrightarrow{\text{P}} p_i^{*}, \text{ for all } i\ge 1, \qquad \frac{1}{|V(M_1^n(d))|} \sum_{v\in M_1^n(d)} d_v^2\xrightarrow{\text{P}} \sum_{i\ge 1} i^2 p_i^{*}<\infty,
\end{equation} 
\begin{equation} \label{sizeb2}
P(M_1^n(d) \text{ is simple})\to 1.
\end{equation}
For a justification of \eqref{sizeb1} and \eqref{sizeb2} see \cite[Proposition 8.2]{bhamidi2016geometry}. Note that $P(D=1)>0$ under Assumption \ref{gracias}, and hence $p_1^{*}>0$. Furthermore, under Assumption \ref{gracias},
\[
\sum_{i\ge 1} i p_i^{*}=\frac{E(D^2)}{E(D)}=2,
\]
and this shows along with \eqref{sizeb1} that $(d_v: v\in M_1^n(d))$ satisfies Assumption \ref{assum4} (after a possible) with limiting probability mass function $p^{*}:=(p_i^{*})_{i\ge 1}$. Let $\mathcal{P}$ denote the partition of $M^n(d)$ into different components. Conditional on the event $\{M_1^n(d) \text{ is simple}\}\cap \{\mathcal{P}=P\}$, $M_1^n$ is uniformly distributed over the set of simple, connected graphs with degree sequence decided by the partition $P$ \cite[Proposition 7.7]{van2016random}. Since $P(M_1^n(d) \text{ is a multigraph})\to 0$ by \eqref{sizeb2}, the following Proposition simply follows as a combination of Theorem \ref{Van} and Lemma \ref{sizeb3}.

\begin{proposition}
Under Assumption \ref{gracias}, for every $\varepsilon>0$ and $T>0$, the rescaled local times of the simple random walk on $M_1^n(d)$ are equicontinuous under the annealed law, i.e.
\[
\lim_{\delta\rightarrow 0} \limsup_{n\to \infty} \mathbb{P}_{\rho^n} \left(\sup_{\substack{y,z\in V(M_1^n(d)): \\ n^{-1/3} R_{M_1^n(d)}(y,z)<\delta}} \sup_{t\in [0,T]} n^{-1/3} |L_{n t}^n(y)-L_{n t}^n(z)|\ge \varepsilon\right)=0.
\]
\end{proposition}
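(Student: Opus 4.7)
The plan is to reduce the statement to Lemma \ref{sizeb3} via a conditioning argument on the partition of $M^n(d)$ into components, using Theorem \ref{Van} and \eqref{sizeb1}--\eqref{sizeb2} to ensure that the reduction takes place on an event of high probability. For fixed $0<a<A<\infty$, $K\in \mathbb{N}$, and $\eta>0$, I would introduce the event
\[
F_n := \{M_1^n(d)\text{ is simple}\} \cap \{|V(M_1^n(d))|\in [an^{2/3},An^{2/3}]\} \cap \{s(M_1^n(d))\le K\} \cap G_n(\eta),
\]
where $G_n(\eta)$ is the event that the empirical degree distribution on $V(M_1^n(d))$ is within $\eta$ of the size-biased mass function $p^{*}$ and its empirical second moment is within $\eta$ of $\sum_i i^2 p_i^{*}$. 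Relations \eqref{sizeb1}--\eqref{sizeb2} together with Theorem \ref{Van} yield $\lim_{\eta\downarrow 0}\lim_{K\uparrow\infty}\lim_{a\downarrow 0,A\uparrow\infty}\liminf_n P(F_n)=1$, so, writing $A^n(\delta)$ for the event whose probability we are bounding, it suffices to prove $\lim_{\delta\downarrow 0}\limsup_n \mathbb{P}_{\rho^n}(A^n(\delta)\cap F_n)=0$ for each fixed $(a,A,K,\eta)$.

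By \cite[Proposition 7.7]{van2016random}, conditional on the partition $\mathcal{P}$ of $M^n(d)$ into components and on $\{M_1^n(d)\text{ is simple}\}$, $M_1^n(d)$ is uniform over simple connected graphs with prescribed degree sequence $\tilde{d}=(d_v:v\in V(M_1^n(d)))$ and surplus $k=s(M_1^n(d))$. On $F_n$ the pair $(\tilde d,k)$ satisfies Assumption \ref{assum4} with limiting mass function $p^{*}$ and $k\le K$, so the conditional law of $M_1^n(d)$ coincides with that of the uniform graph $\mathcal{G}$ of Lemma \ref{assf}, to which Lemma \ref{sizeb3} applies after a change of scale.

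Setting $m=|V(M_1^n(d))|$ and $s=nt/m^{3/2}$, the bounds $an^{2/3}\le m\le An^{2/3}$ give the inclusions
\begin{align*}
\{n^{-1/3}R_{M_1^n(d)}(y,z)<\delta\} &\subseteq \{m^{-1/2}R_{M_1^n(d)}(y,z)<a^{-1/2}\delta\},\\
\{n^{-1/3}|L^n_{nt}(y)-L^n_{nt}(z)|\ge \varepsilon\} &\subseteq \{m^{-1/2}|L^{\mathcal{G}}_{m^{3/2}s}(y)-L^{\mathcal{G}}_{m^{3/2}s}(z)|\ge A^{-1/2}\varepsilon\},
\end{align*}
while $t\in[0,T]$ is transformed into $s\in[0,T/a^{3/2}]$. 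Applying Lemma \ref{sizeb3} to $\mathcal{G}$ with parameters $\delta'=a^{-1/2}\delta$, $T'=T/a^{3/2}$, $\varepsilon'=A^{-1/2}\varepsilon$ then yields $\lim_{\delta\downarrow 0}\limsup_n\mathbb{P}_{\rho^n}(A^n(\delta)\cap F_n)=0$ for every fixed $(a,A,K,\eta)$, and sending these parameters to their extremal values finishes the argument.

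The principal obstacle is the joint randomness of the degree sequence and the surplus on $F_n$: Lemma \ref{sizeb3} is formulated for a single prescribed sequence $\tilde d$ satisfying Assumption \ref{assum4}, whereas here the conditional law varies with $\mathcal{P}$. Its constants come (via Lemma \ref{highon}) from the H\"older tightness of the tilted contour function, whose proof depends on $\tilde d$ only through the first three moments of its empirical distribution; the event $G_n(\eta)$ is engineered precisely to control these moments, so Lemma \ref{sizeb3} applies uniformly over the degree sequences compatible with $F_n$.
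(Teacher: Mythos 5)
Your proposal is correct and follows essentially the same route as the paper, which obtains the proposition by conditioning on the component data (size, surplus, degree sequence), invoking the conditional uniformity of $M_1^n(d)$ over simple connected graphs with that degree sequence, and combining Theorem \ref{Van} with \eqref{sizeb1}--\eqref{sizeb2} and Lemma \ref{sizeb3}, exactly as in the Erd\H{o}s--R\'enyi case (Proposition \ref{ownown}). Your write-up is in fact more explicit than the paper's (which states the proposition "simply follows"), in particular in spelling out the size truncation $[an^{2/3},An^{2/3}]$, the parameter changes $\delta'=a^{-1/2}\delta$, $T'=T/a^{3/2}$, $\varepsilon'=A^{-1/2}\varepsilon$, and the uniformity-in-$\tilde d$ issue that the paper leaves implicit.
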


\subsubsection{Convergence of the walks} \label{newresult}

Croydon \cite{croydon2016scaling} used regular resistance forms to describe the scaling limit of the associated random walks on scaling limits of sequences of spaces equipped with resistance metrics and measures provided that they converge with respect to a suitable Gromov-Hausdorff topology, and under the assumption that a non-explosion condition is satisfied. For families of random graphs that are nearly trees and their scaling limit can be described as a tree `glued' at a finite number of pairs of points, a useful corollary of \cite[Theorem 1.2]{croydon2016scaling} combined with \cite[Proposition 8.4]{croydon2016scaling} yields the convergence of the processes associated with the fused spaces. 

To see that the conclusion of \cite[Proposition 8.4]{croydon2016scaling} holds, recall that under Assumption \ref{gracias}, jointly with Theorem \ref{Van},
\[
\left(V(\text{M}^n_1(d)),n^{-1/3} d_{\text{M}_1^n(d)},\rho^n\right)\longrightarrow \left(\mathcal{M}_D,\frac{c_1^D}{\sqrt{c_2^D}}d_{\mathcal{M}_D},\rho\right),
\]
as $n\to \infty$ in the Gromov-Hausdorff-Prokhorov sense. Let $\mathcal{P}$ denote the partition of $M^n(d)$ into different components. Conditional on the event $\{M_1^n(d) \text{ is simple}\}\cap \{\mathcal{P}=P\}$, $M_1^n$ is uniformly distributed over the set of simple, connected graphs with degree sequence decided by the partition $P$, and therefore the convergence above is valid with $M_1^n(d)$ replaced by $L(\tilde{T}_{\textnormal{s}},\tilde{Z})$ (see Lemma \ref{assf} for its construction). If $\tilde{Z}=\{(x_1,y_1),...,(x_{R_1^n},y_{R_1^n})\}$ with $(x_1,y_1)<<...<<(x_{R_1^n},y_{R_1^n})$, let $D(\tilde{T}_{\textnormal{s}},\tilde{Z})$ be the space obtained by fusing $x_i$ and $\text{gpar}(y_i)$, $1\le j\le R_1^n$, endowed with the graph distance and the push-forward of the uniform probability measure on $\tilde{T}_{\textnormal{s}}$, and observe that 
\[
d_{\text{GHP}}(L(\tilde{T}_{\textnormal{s}},\tilde{Z}),D(\tilde{T}_{\textnormal{s}},\tilde{Z}))\le 5 R_1^n.
\]
Thus, jointly with Theorem \ref{Van}, the convergence above is valid with $M_1^n(d)$ replaced with the `glued' tree $D(\tilde{T}_{\textnormal{s}},\tilde{Z})$, and since $\mathcal{M}_D$ is also a `glued' tree, this shows that the conclusion of \cite[Proposition 8.4]{croydon2016scaling} is valid.

Fix $r\ge 0$. It remains to show that
\[
\lim_{r\to \infty} \liminf_{n\to \infty} P\left(n^{-1/3} d_{M_1^n(d)}(\rho_n,B_n(\rho_n,r)^c)\ge \lambda\right)=1,
\]
for every $\lambda\ge 0$. Indeed,
\begin{align*}
P\left(n^{-1/3} d_{M_1^n(d)}(\rho_n,B_n(\rho_n,r)^c)\ge \lambda\right)&\ge P\left(n^{-1/3} d_{M_1^n(d)}(\rho_n,B_n(\rho_n,r)^c)\ge \lambda,n^{-1/3} D_1^n(d)\le r\right)
\\
&=P(n^{-1/3} D_1^n(d)\le r),
\end{align*}
where $D_1^n(d):=\text{diam}(M_1^n(d))$. Letting $D_1(d):=\text{diam}(\mathcal{M}_D)$, since
\[
|n^{-1/3} D_1^n(d)-D_1(d)|\le 2 d_{\textnormal{GH}}(n^{-1/3} M_1^n(d),\mathcal{M}_D),
\]
we deduce
\[
\liminf_{n\to \infty} P\left(n^{-1/3} d_{M_1^n(d)}(\rho_n,B_n(\rho_n,r)^c)\ge \lambda\right)\ge \liminf_{n\to \infty} P(n^{-1/3} D_1^n(d)\le r)=P(D_1(d)\le r).
\]
As $r\to \infty$ the right-hand-side tends to 1, and this shows that the non-explosion condition is fulfilled. As a consequence we have the convergence of the processes associated with the fused spaces.

\begin{theorem}
It is possible to isometrically embed $(M_1^n(d),d_{M_1^n(d)})$, $n\ge 1$ and $(\mathcal{M}_D,d_{\mathcal{M}_D})$ into a common metric space $(F,d_F)$ such that 
\begin{equation}
\mathbf{P}^{M_1^n(d)}_{\rho^n}\left(n^{-1/3} (X_{\lfloor n t\rfloor}^{M_1^n(d)})_{t\ge 0}\in \cdot \right)\to \mathbf{P}^{\mathcal{M}_D}_{\rho}\left((X_t)^{\mathcal{M}_D}_{t\ge 0}\in \cdot \right)
\end{equation}
weakly as probability measures in $D(\mathbb{R}_{+},F)$.
\end{theorem}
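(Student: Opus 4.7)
The plan is to assemble the ingredients already prepared in the paragraphs immediately preceding the statement and then invoke the general scaling result for processes on spaces equipped with resistance metrics proved in \cite{croydon2016scaling}. First, by \eqref{sizeb2} we may condition on the high-probability event $\{M_1^n(d) \text{ is simple}\}$; on this event, together with the partition structure into components, $M_1^n(d)$ is uniformly distributed over connected simple graphs with its realised degree sequence, and by \eqref{sizeb1} that sequence satisfies Assumption \ref{assum4}. Hence Lemma \ref{assf} applies, and $M_1^n(d)$ is distributed as $L(\tilde{T}_{\textnormal{s}}^{\textnormal{lb}},\tilde{Z})$. The bound $d_{\textnormal{GHP}}(L(\tilde{T}_{\textnormal{s}},\tilde{Z}),D(\tilde{T}_{\textnormal{s}},\tilde{Z}))\le 5 R_1^n$ derived above, together with the tightness of $R_1^n$ furnished by Theorem \ref{Van}, lets us replace $L(\tilde{T}_{\textnormal{s}},\tilde{Z})$ by the glued tree $D(\tilde{T}_{\textnormal{s}},\tilde{Z})$ without affecting the Gromov-Hausdorff-Prokhorov limit.

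Second, I would upgrade the GHP convergence established in \eqref{uni1} from the shortest path distance to the resistance metric. Since on the underlying tree $\tilde{T}_{\textnormal{s}}$ the two metrics coincide, and since identifying at most $R_1^n=O_P(1)$ pairs of vertices can only decrease pairwise resistances and changes both metrics by $O_P(1)$, the rescaled spaces converge in the GHP sense in the resistance metric as well. Together with the convergence of the associated invariant measures (which is just the push-forward of the uniform measure on the coding tree), this shows that
\[
\left(V(M_1^n(d)),n^{-1/3}R_{M_1^n(d)},\pi^n,\rho^n\right)\longrightarrow\left(\mathcal{M}_D,\tfrac{c_1^D}{\sqrt{c_2^D}}\,d_{\mathcal{M}_D},\pi_D,\rho\right)
\]
in the pointed Gromov-Hausdorff-Prokhorov topology. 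Standard facts (see \cite[Example 1.2.5]{fukushima2010dirichlet} and the discussion after Definition \ref{resistf}) tell us that both sides arise from regular resistance forms, that $\pi_D$ has full support on $\mathcal{M}_D$, and that a Hunt process is associated to the limiting form; these are the hypotheses under which \cite[Theorem 1.2]{croydon2016scaling} is formulated.

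Third, the non-explosion condition required by that theorem has been verified at the end of the previous subsection: using the GHP convergence of diameters one sees that
\[
\lim_{r\to \infty}\liminf_{n\to \infty}P\!\left(n^{-1/3} d_{M_1^n(d)}(\rho_n,B_n(\rho_n,r)^c)\ge \lambda\right)=1
\]
for every $\lambda\ge 0$. Having collected all the hypotheses, an application of \cite[Theorem 1.2]{croydon2016scaling} combined with \cite[Proposition 8.4]{croydon2016scaling} (which is the version adapted to sequences of finite graphs with discrete-time walks and yields the correct $n\mapsto\lfloor n t\rfloor$ time-scaling given that $\alpha(n)\beta(n)=m^{M_1^n(d)}$ up to multiplicative constants coming from $c_1^D,c_2^D$) produces a common metric space $(F,d_F)$ into which all the spaces embed isometrically and in which the rescaled walks converge weakly in $D(\mathbb{R}_{+},F)$ to the diffusion on $\mathcal{M}_D$ started from $\rho$, as required.

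The step I expect to require the most care is the second one, namely matching the time-space scaling: one must track how the resistance metric rescales relative to the graph metric, how the total mass $m^{M_1^n(d)}=\sum_v d_v$ scales (it is of order $n^{2/3}$ by \eqref{sizeb1}), and how the limiting resistance carries the multiplicative constants $c_1^D/\sqrt{c_2^D}$ into the time change of the limiting diffusion. The remaining technicalities, namely the conditioning on simplicity and the GHP passage from the glued tree, are now routine, and the non-explosion condition has already been handled.
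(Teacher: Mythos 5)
Your overall skeleton is the same as the paper's: condition on the simplicity event, use Lemma \ref{assf} to realise $M_1^n(d)$ as $L(\tilde{T}_{\textnormal{s}},\tilde{Z})$, pass to the fused tree $D(\tilde{T}_{\textnormal{s}},\tilde{Z})$ via the bound $d_{\textnormal{GHP}}(L(\tilde{T}_{\textnormal{s}},\tilde{Z}),D(\tilde{T}_{\textnormal{s}},\tilde{Z}))\le 5R_1^n$ together with tightness of the surplus from Theorem \ref{Van}, check the non-explosion condition through the convergence of diameters, and conclude by \cite[Theorem 1.2]{croydon2016scaling} combined with \cite[Proposition 8.4]{croydon2016scaling}.

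The genuine gap is your second step. The displayed claim that $\left(V(M_1^n(d)),n^{-1/3}R_{M_1^n(d)},\pi^n,\rho^n\right)$ converges in the pointed GHP sense to $\left(\mathcal{M}_D,\tfrac{c_1^D}{\sqrt{c_2^D}}d_{\mathcal{M}_D},\pi_D,\rho\right)$ is false as stated: $d_{\mathcal{M}_D}$ is the quotient (shortest-path) metric of the glued tree, whereas the resistance metric associated with the glued resistance form is strictly smaller across every macroscopic cycle (two points on a loop of length $\ell$ at quotient distance $d$ are at resistance $d(\ell-d)/\ell<d$), so the rescaled resistance metrics cannot converge to a multiple of $d_{\mathcal{M}_D}$; they converge to a genuinely different (resistance) metric on the same set. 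The justification you give is also incorrect: identifying $R_1^n=O_P(1)$ pairs of vertices does \emph{not} change the metrics by $O_P(1)$ — gluing a pair of vertices at tree distance of order $n^{1/3}$ alters distances and resistances between macroscopically separated points by order $n^{1/3}$. The paper sidesteps this issue entirely: it only establishes graph-distance GHP convergence of the fused trees (equivalently, of the trees together with the marked gluing pairs) to the glued limit $\mathcal{M}_D$, which is precisely the input that \cite[Proposition 8.4]{croydon2016scaling} converts into the resistance-form convergence required by \cite[Theorem 1.2]{croydon2016scaling}; no direct statement about convergence of $R_{M_1^n(d)}$ to $d_{\mathcal{M}_D}$ is made or needed. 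The repair is therefore to delete the direct resistance-metric convergence claim and let Proposition 8.4 carry that burden, which your final paragraph already implicitly does; the rest of your argument (simplicity, fused-tree comparison, non-explosion, time-scaling bookkeeping) is in line with the paper.
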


\subsubsection{Continuity of blanket times of Brownian motion on \texorpdfstring{$\mathcal{M}_{D}$}{MD}}

In Section \ref{excm} we presented $\mathbb{N}^{t,\lambda}$, the inhomogeneous excursion (for excursions starting at time $t$) measure associated with a Brownian motion with parabolic drift as defined in \eqref{parabola}. Denote by $\mathbb{N}^{c,\lambda}_{t}$ the excursion measure associated with $B^{c,\lambda}$ as defined in \eqref{pard}. Write $(e(u): 0\le u\le t)$ for the canonical process under $\mathbb{N}$. By the Cameron-Martin-Girsanov formula \cite[Chapter IX, (1.10) Theorem]{revuz1999continuous}, applied under $\mathbb{N}$,
\[
\frac{d \mathbb{N}^{c,\lambda}_{0}}{d \mathbb{N}}=\exp\left(\frac{\sqrt{c_2}}{c_1} \int_{0}^{t}\gamma(u) d e(u)-\frac{1}{2} \int_{0}^{t} \gamma^2(u) du\right),
\]
where $\gamma(u):=\lambda-\frac{c_2}{c_1^3} u$ is the drift. On the sets of excursions of length $t$, using integration by parts we have that 
\[
\frac{\sqrt{c_2}}{c_1} \int_{0}^{t} \left(\lambda-\frac{c_2}{c_1^3}u\right)d e(u)=\frac{c_2^{3/2}}{c_1^4} \int_{0}^{t} e(u) du,
\]
a multiplicative of the area under the excursion of length $t$. So, the density becomes
\[
\frac{d \mathbb{N}^{c,\lambda}_{0}}{d \mathbb{N}}=\exp\left(\frac{c_2^{3/2}}{c_1^4} \int_{0}^{t} e(u) du-\frac{1}{6}\left(\left(\frac{c_2}{c_1^3} t-\lambda\right)^3+\lambda^3\right)\right).
\]
There is a corresponding probability measure $\mathbb{N}_{0,l}^{c,\lambda}:=\mathbb{N}_0^{c,\lambda}(\cdot |\tilde{L}=l)$, which for a Borel set $\mathcal{B}$ on the space of positive excursions of finite length, is determined by 
\[
\mathbb{N}^{c,\lambda}_{0,l}(\one_{\mathcal{B}})=\frac{\mathbb{N}_l\left(\exp\left(\frac{c_2^{3/2}}{c_1^4} \int_{0}^{l} e(u) du\right) \one_{\mathcal{B}} \right)}{\mathbb{N}_l\left(\exp\left(\frac{c_2^{3/2}}{c_1^4} \int_{0}^{l} e(u) du\right)\right)}.
\]
To determine $\mathbb{N}^{c,\lambda}_0(\tilde{L}\in dl)$ recall that $\mathbb{N}(L\in dl)=f_L(\lambda)=dl/\sqrt{2 \pi l^3}$, $l\ge 0$, and therefore
\[
\mathbb{N}^{c,\lambda}_0(\tilde{L}\in dl)=f_L(l) \exp\left(-\frac{1}{6}\left(\left(\frac{c_2}{c_1^3}l-\lambda\right)^3+\lambda^3\right)\right) \mathbb{N}_l\left(\exp\left(\frac{c_2^{3/2}}{c_1^4} \int_{0}^{l} e(u) du\right)\right).
\]
Let $\mathbf{N}^{c,\lambda}_t$  denote the canonical measure that first at time $t$ picks a tilted Brownian excursion of a randomly chosen length $l$, and then independently of $e$ chooses a number of points according to a Poisson random variable with mean $c_3 \int_{0}^{l} e(t) dt$, which subsequently are distributed uniformly on the area under the graph of $e$. In comparison with \eqref{meas3} we characterize $\mathbf{N}^{c,\lambda}_t(d(e,\mathcal{P}))$ by setting
\begin{align*}
&\mathbf{N}^{c,\lambda}_t(d e,|\mathcal{P}|=k, (d x_1,...,d x_k)\in A_1\times...\times A_k)
\nonumber \\ :=
&\int_{0}^{\infty} \mathbb{N}^{c,\lambda-t}_0(\tilde{L}\in dl) \mathbb{N}^{c,\lambda}_{t,l} (d e) \exp\left(-c_3 \int_{0}^{l} e(u) du\right)\frac{\left(c_3 \int_{0}^{l} e(u) du\right)^k}{k!} \prod_{i=1}^{k} \frac{\ell(A_i\cap A_e)}{\ell(A_e)}.
\end{align*} 
It is easy to see that $\mathbf{N}^{c,\lambda}_t$ is absolutely continuous with respect to $\mathbf{N}$ as defined in \eqref{meas1}. More specifically, 
\[
\frac{d \mathbf{N}_t^{c,\lambda}}{d \mathbf{N}}=\exp \left(1-\frac{1}{6}\left(\lambda^3+\left(\frac{c_2}{c_1^3 }l-\lambda+t\right)^3\right)\right) \left(c_3 \int_{0}^{l} e(u) du\right)^k.
\]
Using the same methods as in Section \ref{excm}, we can prove that 
\begin{theorem} 
Fix $\varepsilon\in (0,1)$. If $\tau_{\textnormal{bl}}^{n}(\varepsilon)$ is the $\varepsilon$-blanket time variable of the random walk on $M_1^n(d)$, started from its root $\rho^n$, then 
\[
\mathbb{P}_{\rho^n}\left(n^{-1} \tau_{\textnormal{bl}}^n(\varepsilon)\le t\right)\to \mathbb{P}_{\rho}\left(\tau_{\textnormal{bl}}^{\mathcal{M}_D}(\varepsilon)\le t\right),
\]
for every $t\ge 0$, where $\tau_{\textnormal{bl}}^{\mathcal{M}_D}(\varepsilon)\in (0,\infty)$ is the $\varepsilon$-blanket time variable of the Brownian motion on $\mathcal{M}_D$, started from $\rho$ and $\mathbb{P}_{\rho}$ is the annealed law defined similarly as in \eqref{lastbits}.
\end{theorem}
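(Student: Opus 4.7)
The plan is to apply Corollary \ref{verification} to the sequence $(M_1^n(d))_{n\ge 1}$ with scaling factors $\alpha(n)=n^{-1/3}$ and $\beta(n)=n$, noting that $\alpha(n)\beta(n)=n^{2/3}$ matches the asymptotic order of $m^{M_1^n(d)}=2|E(M_1^n(d))|$. The ingredients needed for Assumption \ref{Assum3} have essentially been assembled: the convergence of the rescaled walks to the diffusion $X^{\mathcal{M}_D}$ is the content of the theorem in Section \ref{newresult}, while the equicontinuity of the rescaled local times under the annealed law is the last proposition of the preceding subsection. As in the treatment of the critical Erd\H{o}s-R\'enyi graph, the minor discrepancy between $\alpha(n)\beta(n)=n^{2/3}$ and the random total mass $m^{M_1^n(d)}$ can be absorbed by conditioning on the size and the surplus of the component and invoking the tightness of $(n^{-2/3}|V(M_1^n(d))|,R_1^n)$ guaranteed by Theorem \ref{Van}.

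The main task is thus to establish continuity of $\varepsilon\mapsto \tau_{\textnormal{bl}}^{\mathcal{M}_D}(\varepsilon)$ at each fixed $\varepsilon\in(0,1)$, $\mathbf{P}_{\rho}^{\mathcal{M}_D}$-almost surely. I would follow the template of Section \ref{excm} closely. First I would work with the $\sigma$-finite measure $\mathbf{N}$ of \eqref{meas1} on the product space of positive excursions and point configurations. Since $\mathbb{N}\circ\Theta_b^{-1}=\sqrt{b}\,\mathbb{N}$, a direct computation based on the product structure of $\mathbf{N}$ and the scale invariance of the ratio $\ell(A_i\cap A_e)/\ell(A_e)$ under the compound rescaling $\Theta_b(e,\mathcal{P})$ yields $\mathbf{N}\circ\Theta_b^{-1}=\sqrt{b}\,\mathbf{N}$. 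Moreover, the rescaling acts on the glued space $\mathcal{M}_{e,\mathcal{P}}$ by multiplying the distance by $\sqrt{b}$ and the measure by $b$, and the corresponding Brownian motion on $\mathcal{M}_{\Theta_b(e,\mathcal{P})}$ admits local times distributed as $(\sqrt{b}\,L_{b^{-3/2}t}(x))$. It follows that the $(b^{-1}\varepsilon)$-blanket time variable of $\mathcal{M}_{\Theta_b(e,\mathcal{P})}$ equals $b^{-3/2}$ times the $\varepsilon$-blanket time variable of $\mathcal{M}_{e,\mathcal{P}}$ in distribution, so the continuity of the latter at $\varepsilon$ is equivalent to the continuity of the former at $b^{-1}\varepsilon$.

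Combining this scaling identity with a Fubini argument on the monotone-in-$\varepsilon$ mapping $\tau_{\textnormal{bl}}^{e,\mathcal{P}}(\varepsilon)$ gives continuity at every $\varepsilon\in(0,1)$, for $\mathbf{N}$-a.e. $(e,\mathcal{P})$, exactly as in the proof of Proposition \ref{pr2}. The conclusion transfers to $\mathbf{N}^{c,\lambda}_{t}$-a.e. $(e,\mathcal{P})$ by the absolute continuity displayed just before the theorem, whose Radon-Nikodym derivative is a strictly positive exponential-polynomial function of $(l,k,\int_0^l e)$. Conditioning on the length $M_1^{c_D}$ of the longest excursion, this yields continuity of $\tau_{\textnormal{bl}}^{\mathcal{M}_D}(\varepsilon)$, $\mathbf{P}_\rho^{\mathcal{M}_D}$-almost surely, for every fixed $\varepsilon\in(0,1)$. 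An application of Corollary \ref{verification}, followed by dominated convergence to integrate out the randomness of the limit space as in Section \ref{excm}, delivers the stated annealed convergence
\[
\mathbb{P}_{\rho^n}\left(n^{-1}\tau_{\textnormal{bl}}^n(\varepsilon)\le t\right)\longrightarrow\mathbb{P}_{\rho}\left(\tau_{\textnormal{bl}}^{\mathcal{M}_D}(\varepsilon)\le t\right).
\]
The main obstacle in the plan is the careful verification of the scale invariance of $\mathbf{N}$: one has to track how the joint law of the length-$\zeta$ excursion together with the Poisson collection of identification points transforms under $\Theta_b$, in particular how the intensity $c_3\int_0^\zeta e(u)\,du$ rescales to $c_3 b^{3/2}\int_0^\zeta e(u)\,du$ and is compensated by the $\sqrt{b}$-scaling of $\mathbb{N}$. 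Once this is in place the remainder is a direct transcription of the arguments developed for the critical Erd\H{o}s-R\'enyi graph.
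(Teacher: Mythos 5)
Your proposal follows essentially the same route as the paper: the paper's proof of this theorem is precisely a transcription of the Section \ref{excm} argument, using the scale invariance $\mathbf{N}\circ\Theta_b^{-1}=\sqrt{b}\,\mathbf{N}$, the absolute continuity of $\mathbf{N}^{c,\lambda}_t$ with respect to $\mathbf{N}$, the local-time equicontinuity proposition for $M_1^n(d)$, the walk convergence of Section \ref{newresult}, and dominated convergence for the annealed statement. One small simplification over your final worry: since the base measure $\mathbf{N}$ of \eqref{meas1} uses the unit-mean Poisson weight $e^{-1}/k!$ independent of $c_3$, its scale invariance is literally the one already checked in Section \ref{excm}, and the factor $c_3\int_0^l e$ enters only through the strictly positive Radon--Nikodym derivative, so no new invariance verification is required.
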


\section*{Acknowledgements}

I would like to thank my supervisor Dr David Croydon for suggesting the problem, his support and many useful discussions.

\bibliographystyle{abbrv}
\bibliography{BlanketConvergenceAndriopoulos}

\end{document}